\providecommand{\keywords}[1]{\textbf{\textit{Keywords---}} #1}
\title{Isoradial immersions}
\author{%
  C\'edric Boutillier%
  \thanks{%
    {\small Sorbonne Université, CNRS,
      Laboratoire de Probabilités Statistique et Modélisation, LPSM, UMR 8001,
      F-75005 Paris, France; Institut Universitaire de France.
      \texttt{cedric.boutillier@sorbonne-universite.fr}
}},
  David Cimasoni%
  \thanks{%
    {\small Université de Genève, Section de Mathématiques, 1211 Genève 4, Suisse.
     \texttt{david.cimasoni@unige.ch}
    }}
\thanks{%
{\small corresponding author}
},
  B\'eatrice de Tili\`ere%
  \thanks{{\small %
PSL University-Dauphine, CNRS, UMR 7534, CEREMADE, 75016 Paris, France; Institut Universitaire de France.}
{\small\texttt{detiliere@ceremade.dauphine.fr}}
}
}
\date{}
\newcommand{\new}{\textcolor{black}}
\begin{document}
\maketitle

\begin{abstract}
Isoradial embeddings of planar graphs play a crucial role in the study of several models of
statistical mechanics, such as the Ising and dimer models. Kenyon and Schlenker~\cite{KeSchlenk} give a combinatorial characterization of planar graphs admitting an isoradial embedding, and describe the space of such embeddings.
In this paper we prove two results of the same type for generalizations of
isoradial embeddings: \emph{isoradial immersions} and \emph{minimal immersions}.
We show that a planar graph admits a flat isoradial immersion if and only if its
train-tracks do not form closed loops, and that a bipartite graph has a minimal
immersion if and only if it is minimal. In both cases we describe the space of
such immersions. The techniques used are different in both settings, and distinct from those
of~\cite{KeSchlenk}.
We also give an application of our results to the
dimer model defined on bipartite graphs admitting minimal immersions.
\end{abstract}

\keywords{minimal graph, bipartite graph, isoradial embedding, minimal immersion, isoradial immersion}

\section{Introduction}
\label{sec:intro}

An \emph{isoradial embedding} of a planar graph~$\Gs$ is an embedding of~$\Gs$ into the plane so that each face is inscribed in a circle of radius~$1$, and so that the circumcenter of any given face is in the interior of that face. Joining each vertex of~$\Gs$ with the circumcenters of the two adjacent faces, we see that such an isoradial embedding is equivalent to a so-called \emph{rhombic embedding} of the associated \emph{quad-graph}~$\GR$, \emph{i.e.}, an embedding where each face of~$\GR$ is given by a rhombus of side-length~$1$. This is illustrated in Figure~\ref{fig:iso_primal}.

\begin{figure}[ht]
  \centering
  \includegraphics[width=8cm,angle=90]{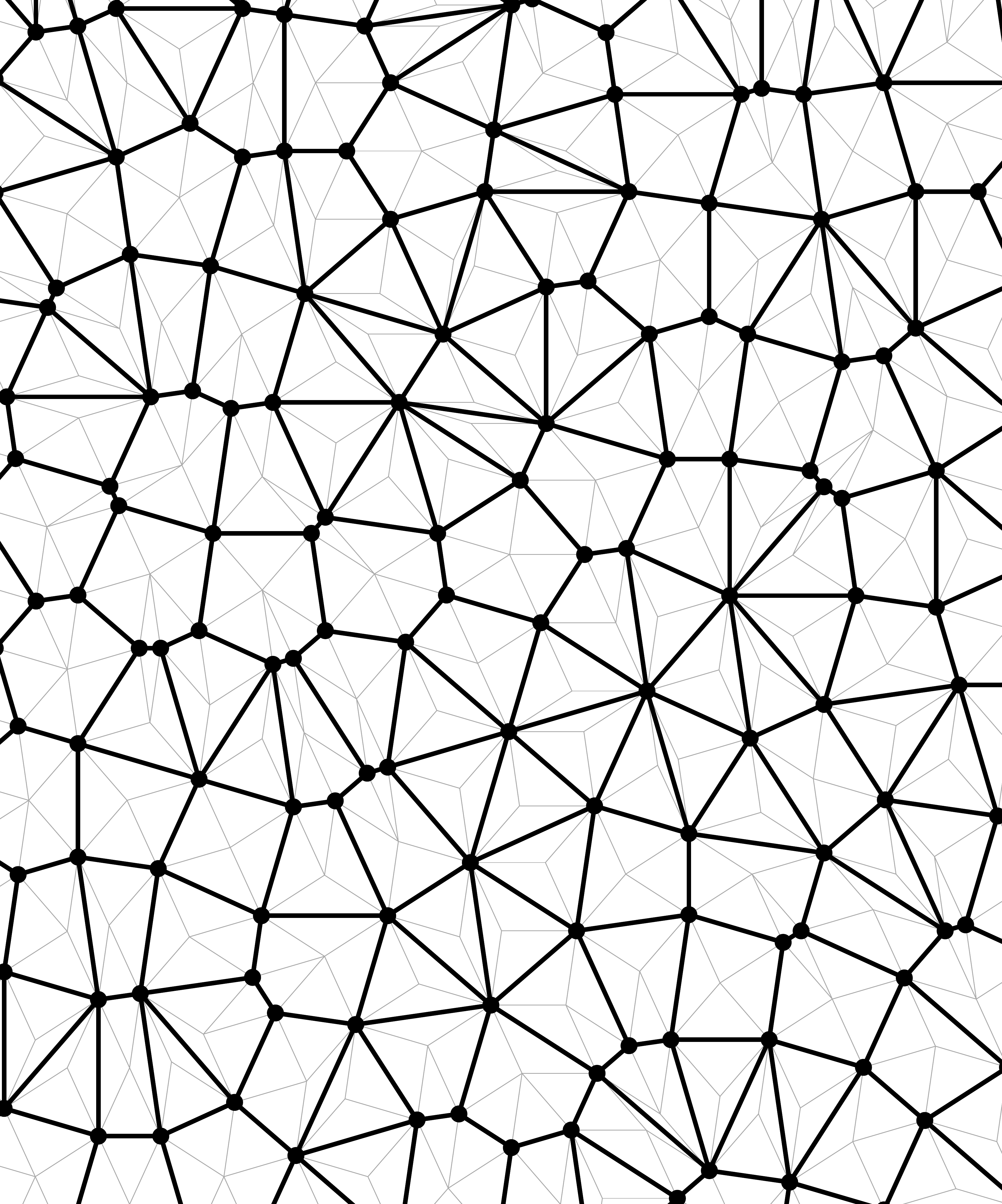}
  \caption{A piece of an isoradial embedding of an infinite planar graph $\Gs$
  (in black) and the underlying  quad-graph $\GR$ (in gray).}
  \label{fig:iso_primal}
\end{figure}

Rhombic embeddings were introduced by Duffin~\cite{Duffin}, as a natural class of graphs for which the Cauchy--Riemann equations admit a nice discretization. These ideas were rediscovered by Mercat~\cite{Mercat}, who built an extensive theory of discrete holomorphicity in one complex variable. Kenyon~\cite{Kenyon:crit} introduced the concept of isoradial embedding as defined above, and proved
it to be very useful in the study of the dimer model
and the Green function~\cite{Kenyon:crit}.
Several other models of statistical mechanics, such as the Ising model, are also shown to exhibit remarkable properties when studied on isoradial graphs with well-chosen weights, see e.g.~\cite{BdT-ZIsing1,BdT-ZIsing2,CimIsing}.
One of the deep underlying reasons lies in the star-triangle transformation and the corresponding Yang--Baxter equations, whose solutions admit an explicit parameterization when the graph of the model is isoradially embedded, see~\cite{Baxter:exactly} and references therein.
This culminates in the work of Chelkak--Smirnov who developed the theory of discrete complex analysis on isoradial graphs to a high point of sophistication~\cite{Chelkak-Smirnov1}, and used it to prove conformal invariance and universality of the Ising model~\cite{Chelkak-Smirnov2} precisely on this class of graphs. The study of statistical mechanics on isoradial graphs is still a very active area of research, as
shown by the recent results obtained on bond percolation~\cite{Grimmett-Mano,Grimmett}, dimer heights~\cite{deTil:iso,Li}, the random cluster model~\cite{Be-Du-Sm,Du-Li-Mano}, random rooted spanning forests~\cite{BdT-ZLaplace}, and the Ising model~\cite{BdT-ZIsing3}. We refer to the survey~\cite{BdTsurvey} for more details on the history of isoradial graphs and their role in statistical physics.

Coming back to the strictly combinatorial study of isoradial graphs, a natural question immediately arises: when does a fixed planar graph~$\Gs$ admit an isoradial embedding, and if so, what is the space of such embeddings of~$\Gs$? Let us briefly explain the answer to this question obtained by Kenyon and Schlenker in~\cite{KeSchlenk}. Given an arbitrary planar, embedded graph~$\Gs$, replace each edge by two (non-oriented) segments crossing each other at this edge, as described in Figure~\ref{fig:quad}. Gluing these segments together, we obtain a set of planar curves called \emph{train-tracks}, see Section~\ref{sub:gen_def} 
for a more formal definition. The main result of~\cite{KeSchlenk} is that a graph~$\Gs$ admits an isoradial embedding if and only if its train-tracks form neither closed loops nor self-intersections, and two train-tracks intersect at most once. To describe the space of isoradial embeddings of such a graph~$\Gs$, let us denote by~$Z_\Gs$ the space of maps~$\alpha$ associating to each oriented train-track~$\tr$ of~$\Gs$ a direction~$\alpha(\tr)\in\RR/2\pi\ZZ$, so that the same train-track with the opposite orientation~$\tl$ receives the opposite direction~$\alpha(\tl)=\alpha(\tr)+\pi$. An isoradial embedding of~$\Gs$ defines an element of~$Z_\Gs$ as follows: assign to an oriented train-track the direction of the parallel sides of the rhombi crossing this train-track, say from left to right. Note that any two oriented train-tracks~$\tr_1,\tr_2$ intersecting (once) define a rhombus, so the corresponding two directions must satisfy some condition for this rhombus to be embedded in the plane with the right orientation: with the orientations of Figure~\ref{fig:quad} and the conventions above, we need to have $\alpha(\tr_2)-\alpha(\tr_1)\in(0,\pi)$.
These conditions define a subspace of~$Z_\Gs$, which is shown in~\cite{KeSchlenk} to be equal to the space of isoradial embeddings of~$\Gs$.

The aim of the present paper is to prove two results of the same flavor
for the largest possible families of planar graphs.
We now 
state these two results, referring to Sections~\ref{sec:flat} and~\ref{sec:minimal} for complete and precise statements.

As mentioned above,
in the definition of an isoradial embedding, each rhombus has a \emph{rhombus angle}~$\theta=\alpha(\tr_2)-\alpha(\tr_1)$ in~$(0,\pi)$.
What if we relax this condition and 
allow
rhombus angles in~$(0,2\pi)$, thus yielding
folded rhombi? 
\new{One of the goals of this paper is to prove that, in the bipartite case, this
situation can be precisely understood. 
In the bipartite case indeed, it is possible to
orient the train-tracks in a consistent way, \emph{i.e.}, so that train-tracks turn
counterclockwise around
vertices of a given class of the bipartition and clockwise around vertices of the other class;
such a consistent orientation is uniquely defined from the bipartite structure, and canonical up to complete inversion. Then allowing the rhombus angles to be in $(0,2\pi)$ and asking that the total angle at vertices and faces is equal to $2\pi$ yields the notion of \emph{minimal immersion}, see Sections~\ref{sub:def} and~\ref{sub:min} for a formal definition.}
The main
result of Section~\ref{sec:minimal} can be loosely stated as follows, see
Theorem~\ref{thm:min} in Section~\ref{sub:min} for the precise statement.

\begin{thm}
\label{thm:1}
\new{Consider a bipartite, planar graph $\Gs$ with the consistent orientation of the train-tracks. Then, the graph $\Gs$}
admits a minimal immersion if and only if its train-tracks do not self-intersect and two  oriented train-tracks never intersect twice in the same direction. 
In any case,
the space of minimal immersions of~$\Gs$ is given by an explicit subspace~$Y_\Gs$ of~$Z_\Gs$ defined by one condition for each vertex and for each face of~$\Gs$.
\end{thm}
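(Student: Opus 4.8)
The plan is to handle both assertions of the theorem through one geometric device, the developing map attached to a direction assignment $\alpha\in Z_\Gs$, and then to reduce the existence question to a purely combinatorial problem of simultaneously realizing cyclic orders.

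I would first set up the correspondence underlying the description of the space. Given $\alpha\in Z_\Gs$, place the vertices of the quad-graph $\GR$ in the plane by fixing one base vertex and integrating, along each edge of $\GR$, the unit vector of direction $\alpha(\tr)$, where $\tr$ is the train-track crossed by that edge. Each quadrilateral face of $\GR$ then closes up automatically into a unit rhombus, since its two pairs of opposite sides carry the directions $\alpha(\tr_1),\alpha(\tr_2)$ of the two crossing train-tracks; hence the positions of all vertices of $\GR$ are well defined, up to a global translation, for \emph{every} $\alpha$. Consequently, being a minimal immersion is not about the picture closing up but about it being a genuine, possibly folded, immersion: around each vertex and each face the rhombus angles must wind exactly once, that is, sum to $2\pi$ and not to a larger multiple of $2\pi$. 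With the consistent orientation, each rhombus angle $\alpha(\tr_2)-\alpha(\tr_1)$ lies in $(0,2\pi)$, and the winding-once condition at a vertex $v$ is equivalent to asking that the directions of the train-tracks meeting $v$ occur on the circle $\RR/2\pi\ZZ$ in the same cyclic order as the train-tracks occur around $v$; dually at each face. This rewrites the defining conditions as exactly one cyclic-order constraint per vertex and per face, and identifies the space of minimal immersions, modulo isometry, with the subspace $Y_\Gs$ of $Z_\Gs$ they define, which settles the description of the space.

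I would then prove that the two combinatorial conditions are necessary. If some train-track self-intersects, then a rhombus of $\GR$ is the crossing of that track with itself; both pairs of its opposite sides carry the single direction $\alpha(\tr)$, so the rhombus is degenerate and no immersion exists. If two oriented train-tracks $\tr_1,\tr_2$ cross twice in the same direction, they bound an immersed bigon whose two corners carry the same angle $\alpha(\tr_2)-\alpha(\tr_1)$; a total-turning (Gauss--Bonnet type) computation along the boundary of this immersed bigon then contradicts the requirement that every vertex and every face have winding exactly $2\pi$, the crucial point being that crossing ``in the same direction'' fixes the two corner angles incoherently with a consistently oriented, degree-one immersion.

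The main obstacle is the converse. Assuming that the train-tracks of the bipartite graph $\Gs$ neither self-intersect nor cross twice in the same direction---that is, $\Gs$ is minimal---I must exhibit an element of $Y_\Gs$, equivalently a simultaneous solution of all the local cyclic-order constraints. The plan is to encode these constraints as a relation on the oriented train-tracks, recording for each vertex and face that one track must be rotated counterclockwise into the next, and to show that minimality is precisely what forbids a frustrating cycle in this relation, so that it extends to a global circular order of all train-tracks; assigning the directions $\alpha(\tr)$ according to that order, with sufficient room between consecutive ones, then meets every vertex and face constraint at once. I expect this acyclicity-and-realizability step to be the heart of the proof, with the forbidden self-intersections and same-direction double crossings emerging as exactly the minimal obstructions to consistency; it will likely be carried out either by induction along elementary reductions of minimal graphs or by reading the global circular order directly off the combinatorics of the train-track arrangement. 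A further technical point to carry throughout is that $\Gs$ may be infinite and periodic, so the developing map has to be built on the universal cover and shown to descend, and the construction of $\alpha$ has to be made compatible with the periodicity.
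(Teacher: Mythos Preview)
Your identification of $Y_\Gs$ via local cyclic-order constraints at vertices and faces, and your treatment of the necessity direction (degenerate rhombus from a self-intersection; a Gauss--Bonnet type contradiction from an innermost parallel bigon) are essentially the paper's argument: the paper proves a discrete Gauss--Bonnet formula and applies it to the simple closed curve bounded by an innermost bigon, obtaining corner angles summing to $0$ instead of $2\pi$.

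For the sufficiency direction, you list two possible routes and lean toward the first: encode the local constraints as a relation on oriented train-tracks, prove acyclicity from minimality, and extend to a global circular order. The paper takes your \emph{second} route, and the difference matters. The global cyclic order on $\Tbip$ is not built bottom-up from local data but \emph{defined directly}: since no train-track is a closed loop, each is a bi-infinite curve, and one orders triples of (pairwise non-parallel) oriented train-tracks by the cyclic position of their outgoing ends on the boundary of a large disk outside of which they no longer meet. With this order in hand, two short lemmas show, by direct contradiction with minimality, that it restricts to the local cyclic order on~$\Tbip(v)$ at every vertex and on~$\Tbip^\bullet(f),\Tbip^\circ(f)$ at every face. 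Any monotone map~$\alpha\colon\Tbip\to\RR/2\pi\ZZ$ with respect to this global order (these form the non-empty set~$X_\Gs$) then lies in~$Y_\Gs$, and existence follows.

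Your acyclicity-and-extension plan is riskier than it looks: whether the local cyclic orders alone determine the global one is posed by the paper as an open question, resolved only in the $\ZZ^2$-periodic case. So the ``bottom-up'' approach may not go through in general, whereas the ``top-down'' definition of the global order sidesteps this entirely. Your remark about needing universal covers for periodic graphs is also unnecessary: the arguments are local (Gauss--Bonnet on finite curves; restriction of a globally defined order), and no developing-map descent is required.
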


To be slightly more precise, if~$\Gs$ satisfies the two conditions of Theorem~\ref{thm:1}, then the space~$Y_\Gs$ is shown to contain (and in the~$\ZZ^2$-periodic case, to coincide with) the non-empty subspace~$X_\Gs$ of angle maps which are monotone with respect to a natural cyclic order on the set of oriented train-tracks of~$\Gs$, see Section~\ref{sub:min}.

Graphs satisfying these two conditions are referred to as~\emph{minimal graphs}. They were introduced by Thurston~\cite{Thurston} (the original arXiv paper appeared in 2004), play an important role in the string theory community~\cite{Gulotta,Akira_Ueda} where they are referred to as \emph{consistent dimer models}, and in the seminal paper of Goncharov--Kenyon \cite{GK}. In the specific case of $\ZZ^2$-periodic graphs, some of the concepts we use already appear in the literature: local and global ordering around vertices \cite[Lemma 4.2]{Akira_Ueda}, minimal immersions (in an understated way) in the proof of~\cite[Lemma~3.9]{GK}, the space $X_\Gs$~\cite[Theorem~3.3]{GK}.

As explained above, the study of isoradial embeddings is motivated by statistical physics, and the present work is no exception.
Indeed, in Theorem~\ref{thm:phase},
we use the result above to show that there exists~$\alpha\in Z_\Gs$ such that
some naturally occurring associated edge-weights~\cite{Kenyon:crit,Fock}
satisfy the so-called \emph{Kasteleyn condition} if and only if the graph~$\Gs$ is minimal,
see Section~\ref{sub:condition}.
This allows us to harness Kasteleyn theory~\cite{Kast61,Kuperberg} in our \new{papers~\cite{nous,BCdT:genusg}},
which deal with the dimer model on minimal graphs with the weights of~\cite{Fock}. 
\new{As a consequence, minimal graphs form the largest class of graphs where dimer models with such weights can be solved
in a satisfactory way. Otherwise said, going beyond minimal immersions would give models with negative weights whose probabilistic interpretation is problematic. Moreover, the space~$Y_\Gs$ of minimal immersions of a given minimal graph~$\Gs$ coincides
with the space of dimer models on~$\Gs$ with fixed ``abstract spectral data''
(\emph{i.e.} an~$M$-curve together with a divisor on it, see~\cite{BCdT:genusg}).
Also, the minimal immersion of~$\Gs$ corresponding to an element~$\alpha\in Y_\Gs$ gives a natural embedding of~$\Gs^*$,
as it can be shown to coincide in the rational limit with the~$t$-embedding of~$\Gs^*$ described in~\cite{KLRR}, see~\cite[Section~8.1]{nous}.
Finally, it is likely that a significant amount of the discrete complex analysis of~\cite{Chelkak-Smirnov1} extends from isoradial
to minimal graphs in a rather straightforward way, but we did not explore this question. Summarizing, it seems that minimal 
graphs rather than isoradial 
ones give the most natural framework to study such aspects of statistical mechanics models.}

Although \new{in the bipartite case,} minimal graphs form a larger family than isoradial ones, they are still quite specific amongst planar graphs. With the goal of ``immersing'' with rhombi the most general possible planar graphs, we \new{remove the assumption of being bipartite (as for isoradial embeddings) and we furthermore} relax the conditions on the rhombus angles:
in Section~\ref{sec:flat}, 
we allow for the rhombus angle~$\theta$ to be any lift in~$\RR$ of the angle~$\alpha(\tr_2)-\alpha(\tr_1)\in\RR/2\pi\ZZ$, and define the dual rhombus angle to be~$\theta^*=\pi-\theta$. 
\new{The resulting theory is trivial without additional condition, so we impose the following geometrically natural one:}
theses angles add up to~$2\pi$ around each vertex and face of~$\Gs$ when pasting these ``generalized rhombi'' together using the combinatorial information of~$\Gs$. This leads to the notion of \emph{flat isoradial immersion} of the planar graph~$\Gs$,
\new{whose formal definition can be found in Section~\ref{sub:def}.
In order to keep track of the different notions introduced, let us emphasize that a minimal immersion is a flat isoradial immersion of a bipartite graph with all rhombus angles restricted to be in~$(0,2\pi)$.}
There is a natural equivalence relation on \new{flat isoradial} immersions of a fixed graph, see Definition~\ref{def:equiv} in Section~\ref{sub:statement_main}. We now state the main theorem of Section~\ref{sec:flat}.

\begin{thm}
\label{thm:2}
A planar graph~$\Gs$ admits a flat isoradial immersion if and only if its train-tracks do not form closed loops. When this is the case,
the space of equivalence classes of flat isoradial immersions of~$\Gs$ is given by the space~$Z_\Gs$.
\end{thm}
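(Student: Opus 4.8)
The plan is to realize a flat isoradial immersion as the developing map of a flat structure on the quad-graph~$\GR$, and to reduce its existence to an integer linear system indexed by the vertices and faces of~$\Gs$. First I would record that, for any~$\alpha\in Z_\Gs$, each rhombus~$R$ of~$\GR$ (one per edge of~$\Gs$), crossed by train-tracks~$\tr_1,\tr_2$, has its shape prescribed up to~$2\pi$: its angle must be a lift~$\theta_R=\theta_R^0+2\pi n_R$ of~$\alpha(\tr_2)-\alpha(\tr_1)\in\RR/2\pi\ZZ$ with~$n_R\in\ZZ$, and~$\theta_R^*=\pi-\theta_R$. Since opposite sides of each rhombus are (anti)parallel, with common directions~$e^{\mathrm i\,\alpha(\tr_1)}$ and~$e^{\mathrm i\,\alpha(\tr_2)}$, the four edge-vectors always close up; hence the only constraints are the flatness conditions that the abstract angles sum to~$2\pi$ around every vertex and every face of~$\Gs$. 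Because the principal parts telescope around each vertex and each face, $\sum_{R\ni v}\theta_R^0\in 2\pi\ZZ$ and likewise at faces, these conditions become an affine system over~$\ZZ$, namely~$\sum_{R\ni v}n_R=c_v$ for each vertex~$v$ and~$\sum_{R\ni f}n_R=c_f$ for each face~$f$, for explicit integers~$c_v,c_f$ depending on~$\alpha$. Writing~$A$ for the incidence matrix whose rows are indexed by the vertices and faces of~$\Gs$ and whose columns by its edges (all entries in~$\{0,1\}$, each edge incident to its two endpoints and its two adjacent faces), this reads~$An=c$.

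The hard direction of the equivalence is that a closed train-track loop obstructs flatness. An innermost closed train-track~$\tr$ bounds a disk~$D$ in~$\GR$, all of whose rhombi met by~$\tr$ share the single edge direction~$\alpha(\tr)$; intuitively, a flat immersion would develop~$D$ to a flat immersed disk whose boundary is a closed immersed curve of turning~$\pm2\pi$ by the Umlaufsatz, yet the edges carried by~$\tr$ all point in direction~$\pm e^{\mathrm i\,\alpha(\tr)}$ and force turning~$0$, a contradiction. Rigorously, I would phrase this at the level of~$An=c$: the vector~$z$ given by~$z_v=\mathbf 1_{v\in D}$ on vertices and~$z_f=-\mathbf 1_{f\in D}$ on faces lies in the left-kernel of~$A$ (a rhombus crossed by~$\tr$ contributes~$0+1-1+0=0$, an interior or exterior rhombus contributes~$1+1-1-1=0$ or~$0$), so solvability would require~$\sum_{v\in D}c_v-\sum_{f\in D}c_f=0$; this is a discrete Gauss--Bonnet identity for~$D$ that fails precisely by the defect produced by the loop. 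This establishes the ``only if''.

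For the converse and the description of the space I would show that, in the absence of closed train-track loops, the system~$An=c$ is solvable, and determine its solution set. The key lemma, and the step I expect to be the main obstacle, is a complete characterization of the left-kernel of~$A$: I expect it to be generated by the indicators~$(\mathbf 1_{v\in D},-\mathbf 1_{f\in D})$ of the disks~$D$ bounded by minimal closed train-tracks, together with the global element~$z\equiv(1,-1)$ encoding the overall Gauss--Bonnet/Euler-characteristic relation, which is compatible with~$c$ automatically for every~$\alpha\in Z_\Gs$. In the loop-free case only the global relation survives, its compatibility is automatic, and since every column of~$A$ has unit coefficients the residual divisibility is harmless, so~$An=c$ has an integer solution; for infinite planar graphs I would replace this finite argument by an exhaustion, extending the flat structure rhombus by rhombus and invoking loop-freeness to ensure that the successive local flatness choices never meet a monodromy obstruction. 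This produces a flat isoradial immersion realizing any prescribed~$\alpha$.

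Finally, the assignment sending an immersion to its edge-direction data defines a map to~$Z_\Gs$ which is constant on equivalence classes and, by the previous paragraph, surjective. Two flat isoradial immersions with the same~$\alpha$ differ only in their lifts, that is, by an element of the integer kernel~$\{n:\,An=0\}$, and the equivalence relation of Definition~\ref{def:equiv} is exactly generated by the local moves adding such a homogeneous solution while fixing~$\alpha$; hence each fiber over~$\alpha$ is a single equivalence class. Verifying that these moves realize all of~$\ker_{\ZZ}A$, equivalently that no geometric invariant beyond~$\alpha$ survives the quotient, is the last point to check, after which the space of equivalence classes of flat isoradial immersions of~$\Gs$ is identified with~$Z_\Gs$, as claimed.
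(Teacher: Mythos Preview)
Your linear-system reformulation matches the paper's Equation~\eqref{equ:flat_2}, and your instinct that a closed train-track should produce a left-kernel obstruction is sound. However, several steps in your execution are genuine gaps rather than routine checks.

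For the ``only if'' direction, you assume an innermost closed train-track is \emph{simple} and bounds a disk, but a closed train-track may self-intersect; an innermost subloop of such a curve is then not itself a train-track (it acquires a corner at the self-intersection), and your indicator vector~$z$ fails to annihilate~$A$ at that corner. The paper sidesteps this entirely: it first shows (Proposition~\ref{prop:Reidemeister}) that the existence of a flat isoradial immersion is preserved under the three Reidemeister-type moves on~$\Gs^\T$, and then uses these moves to reduce any closed loop to one explicit local picture (Figure~\ref{fig:reduction_loop}) where two flatness equations are manifestly incompatible. Your discrete Gauss--Bonnet heuristic does appear in the paper (Proposition~\ref{lem:lem_sum}), but it is derived under the hypothesis that no closed loops exist, and is used for Theorem~\ref{thm:1}, not here.

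For existence and uniqueness you propose to characterise the left-kernel of~$A$ and the integer kernel~$\ker_\ZZ A$, but you only \emph{state} what you expect these to be. Two problems remain: first, your claimed generating set for the left-kernel (disk indicators plus a global~$(1,-1)$) needs proof, and on an infinite graph the ``global'' element is not even finitely supported; second, knowing the rational left-kernel and checking compatibility does not by itself yield \emph{integer} solvability --- ``every column has unit coefficients so divisibility is harmless'' is not a valid argument. The paper takes a different, constructive route. Lemma~\ref{lem:corner} produces a set~$M\subset\Vs^\T$ together with an injection~$\tau\colon M\to\T$ (hitting all train-tracks but one) and a basis~$(c_e)_{e\in\Vs^\T\setminus M}$ of cycles of~$\Gs^\T$ by closed curves, each~$c_e$ having exactly one corner outside~$M$. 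Lemma~\ref{lem:flat} shows that summing the flatness equations along any closed curve determines the value of~$k$ at one corner from the others. Together these triangularise the system: choose~$(k_e)_{e\in M}$ freely, and the remaining~$k_e$ are forced, giving existence directly over~$\ZZ$; for uniqueness, two solutions can be made to agree on~$M$ by shifts along the train-tracks~$\tau(e)$, after which they agree everywhere. This argument works uniformly for infinite graphs, with no exhaustion needed.

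In short, your framework is correct --- and the paper itself recasts the theorem in exactly your linear-algebra language in Section~\ref{sub:rem} --- but the substance of the proof, namely the Reidemeister invariance for one direction and Lemma~\ref{lem:corner} for the other, is what your proposal is missing.
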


One of the main tools in the proof of this theorem is a combinatorial result, Lemma~\ref{lem:corner} of
Section~\ref{sub:cycles} which, although slightly technical, might be of independent interest.
Proving Theorem~\ref{thm:2} amounts to solving a linear system defined on an auxiliary graph. In the specific $\ZZ^2$-periodic case, it turns out that the same linear system appears in a completely different context,
see the physics papers~\cite[Equations (2.2) and (2.3)]{Franco_2006} and~\cite[Equations (3.1) and (3.2)]{Gulotta} for $R$-charges in some supersymmetric conformal field theory.
\new{Therefore, even though the possible role of flat isoradial immersions in statistical mechanics and discrete complex analysis
remains elusive, their intriguing appearance in superconformal field theory does suggest interesting further research.}
  
We conclude this introduction by pointing out that, even though Theorem~\ref{thm:1}, Theorem~\ref{thm:2} and the main
result of~\cite{KeSchlenk} are of the same nature, they belong to different categories: isoradial embeddings
are purely geometric objects while flat isoradial immersions are almost entirely combinatorial, and minimal
immersions lie in between. As a consequence, the sets of tools used in the proofs of these three statements are
almost entirely disjoint. For example, the proof of the ``only if'' direction is immediate
in~\cite[Theorem~3.1]{KeSchlenk}, but requires a discrete Gauss--Bonnet formula in Theorem~\ref{thm:1}, and the
combinatorial study of train-track deformations in Theorem~\ref{thm:2}.
We refer the reader to Figure~\ref{fig:summary} for a visualization of the classes of graphs
and of the geometric realizations involved in these three results.

This paper is organised as follows. 
\begin{enumerate}
\item[$\bullet$] Section~\ref{sec:comb} deals with the purely combinatorial aspects of our work: the fundamental 
notions are defined in Section~\ref{sub:gen_def}, while Section~\ref{sub:cycles} contains a crucial result on 
train-tracks of an arbitrary planar graph. Section~\ref{sub:ttmin} deals with 
oriented train-tracks of
bipartite graphs, defining a natural cyclic order on them, and studying this order in the case of minimal graphs. 
\item[$\bullet$] In Section~\ref{sec:flat}, we first introduce the general notion of (flat) isoradial immersion (Section~\ref{sub:def}), then state the precise version of Theorem~\ref{thm:2} in Section~\ref{sub:statement_main}, and give the proof in Section~\ref{sub:proof_statement_main}. The last Section~\ref{sub:rem} gathers additional remarks on the definition of isoradial immersions, and on the proof of Theorem~\ref{thm:2}. In particular, we provide a geometric interpretation of isoradial immersions using rhombi that are iteratively folded along their primal or dual edge; we also prove a 
discrete Gauss--Bonnet formula.
\item[$\bullet$] 
Section~\ref{sec:minimal} deals with Theorem~\ref{thm:1} above. In Section~\ref{sub:min}, we define minimal immersions of bipartite graphs, give a geometric characterization of these immersions, define the spaces~$X_\Gs$ and~$Y_\Gs$ and state the main result. All proofs are contained in Section~\ref{sub:proofmin}, while Section~\ref{sub:condition} deals with the aforementioned implications on the dimer model.
\end{enumerate}

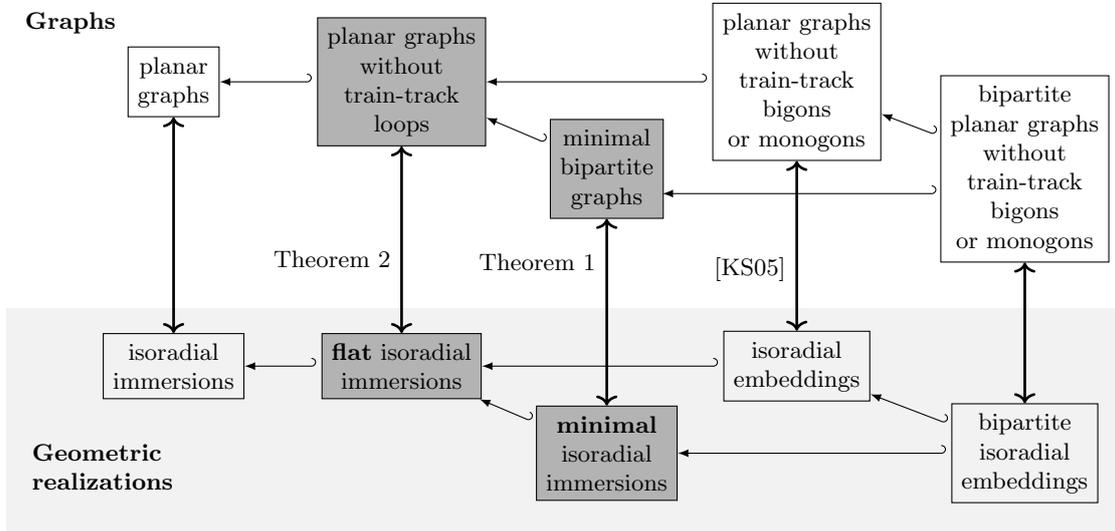
\begin{figure}[ht]
  \centering
  \begin{tikzpicture}[%
    on grid,
    node distance=30mm,
    every text node part/.style={%
    font=\footnotesize,
    align=center,
  },
    before/.style={%
      rectangle,
      draw=black,
    },
    now/.style={%
      rectangle,
      draw=black,
      fill=black!30
    }
    ]
  \node (planar) [before]  {planar \\graphs};
  \node (noloop) [now, right=of planar] {planar graphs \\ without  \\ train-track \\
    loops};
  \node (minimal) [now, right=of noloop,xshift=-3mm,yshift=-3em] {minimal\\bipartite\\graphs};
  \node (minimal2) [below=0.5em of minimal.east, xshift=-0.4em] {};
  \node (KS) [before, right=of minimal,yshift=3em,xshift=-5mm] {planar graphs \\ without \\
      train-track \\ bigons\\
    or monogons} ;
  \node (bipKS) [before,right=of KS,yshift=-3em] {bipartite \\planar graphs \\ without \\
      train-track\\  bigons\\
    or monogons} ;
  \node (bipKS2) [below=0.5em of bipKS.west,xshift=0.4em] {};
  \path [left hook-latex, shorten <=2pt]  (noloop) edge (planar);
  \path [left hook-latex, shorten <=2pt] (minimal)edge (noloop);
  \path [left hook-latex, shorten <=2pt](KS) edge (noloop);
  \path [left hook-latex, shorten <=2pt](bipKS2) edge (minimal2);
  \path [left hook-latex, shorten <=2pt](bipKS)  edge (KS);

  \fill  (-2.2,-3) [fill=black!5] rectangle (12.5,-6);

  \begin{scope}[yshift=-10cm]
  \node (imm) [before, below=of planar, yshift = -2em] {isoradial\\immersions};
  \node (flat) [now, below=of noloop , yshift = -2em] {\textbf{flat} isoradial\\immersions};
  \node (immini) [now, below=of minimal, yshift=-2em] {\textbf{minimal} \\ isoradial\\immersions};
  \node (KSemb) [before, below=of KS, yshift=-2em]  {isoradial
    \\embeddings};
  \node (bipKSemb) [before, below=of bipKS, yshift=-2em] {bipartite\\isoradial\\embeddings};
\end{scope}

  \path [left hook-latex, shorten <=2pt](flat)     edge (imm);
  \path [left hook-latex, shorten <=2pt](immini)   edge (flat);
  \path [left hook-latex, shorten <=2pt](KSemb)    edge (flat);
  \path [left hook-latex, shorten <=2pt](bipKSemb) edge (KSemb);
  \path [left hook-latex, shorten <=2pt](bipKSemb) edge (immini);

  \path (planar) edge[<->, line width=1pt] (imm);
  \path (noloop) edge[<->, line width=1pt] node (a) [below left] {Theorem~\ref{thm:2}} (flat);
  \path (minimal) edge[<->, line width=1pt] node [below left, yshift=2.4em] {Theorem~\ref{thm:1}} (immini);
  \path (KS) edge[<->, line width=1pt] node [below left] {\cite{KeSchlenk}} (KSemb);
  \path (bipKS) edge[<->, line width=1pt] (bipKSemb);
  \node [above=5mm of planar.west]  (b) {\hspace{-1.5cm}\textbf{Graphs}};
  \node [below=9mm of imm.west] (c) {\hspace{-0.15cm}\textbf{Geometric}\\ \textbf{realizations}};
  \end{tikzpicture}
  \caption{Visualization of Theorems~\ref{thm:1} and~\ref{thm:2} (in shaded
    boxes) and the main result of~\cite{KeSchlenk}.
    Inside each horizontal half, the arrows connecting the various
classes of planar graphs (resp.\ geometric realizations) indicate inclusions. The
vertical double arrows indicate an equivalence between a
graph belonging to a particular class and it admitting a particular geometric
realization.
The first vertical arrow stems from the definition of isoradial immersions
  (see Section~\ref{sub:def}),
whereas the last one is a specialization of the results
of~\cite{KeSchlenk} to the bipartite case.}
\label{fig:summary}
\end{figure}

\subsection*{Acknowledgments}

This project was started when the second-named author was visiting the first and third-named authors at the LPSM, Sorbonne Universit\'e, whose hospitality is thankfully acknowledged. 
\new{The authors would also like to thank the anonymous referees for their constructive remarks.}
The first- and third-named authors are partially supported by the \emph{DIMERS} project~ANR-18-CE40-0033 funded by the French National Research Agency.
The second-named author is partially supported by the Swiss National Science Foundation.

\section{Planar graphs and train-tracks}
\label{sec:comb}

The aim of this section is to introduce the basic combinatorial notions that are used throughout this article. We also prove four combinatorial lemmas that play a crucial role in the proof of the main results of Sections~\ref{sec:flat} and~\ref{sec:minimal}.

\subsection{General definitions}
\label{sub:gen_def}

Consider a locally finite graph~$\Gs=(\Vs,\Es)$ embedded in the plane so that
its bounded faces are topological discs, and denote by~$\Gs^*=~(\Vs^*,\Es^*)$ the dual embedded graph.
To avoid immediately diving into technical details,
we first assume that all faces of~$\Gs$ are bounded; we then turn to the
general case, where unbounded faces require extra care.

Given a graph~$\Gs$ as above, the associated \emph{quad-graph}~$\GR$
has vertex set~$\Vs\sqcup\Vs^*$ and edge set defined as follows: a primal
vertex~$v\in\Vs$ and a dual vertex~$f\in\Vs^*$ are joined by an edge each time
the vertex~$v$ lies on the boundary of the face corresponding to the dual
vertex~$f$. Note that a vertex can appear twice on the boundary of a face, which
gives rise to two edges between the corresponding vertices of~$\GR$.
Since the faces of~$\Gs$ are topological discs, the quad-graph~$\GR$ embeds in the plane with faces
consisting of (possibly degenerate) quadrilaterals. For example, a degree~$1$ vertex
or a loop in~$\Gs$ gives rise to a quadrilateral with two adjacent sides identified,
see Figure~\ref{fig:g_quad-graph_train_tracks} (left).

Following~\cite{KeSchlenk}, see also~\cite{Kenyon:crit}, we define a
\emph{train-track} as a maximal path of adjacent quadrilaterals of~$\GR$ which
does not turn: when it enters a quadrilateral, it exits through the opposite
edge.
Train-tracks are also known as rapidity lines in the field of integrable systems, see for example~\cite{Baxter:8V}.
By a slight abuse of terminology, we actually think of a train-track
as the corresponding path in the dual graph~$(\GR)^*$ crossing opposite sides of
the quadrilaterals. (These notions still make sense when a quadrilateral is
degenerate.)
Note that the graphs~$\Gs$ and~$\Gs^*$ have the same set of train-tracks, which we denote by~$\T$.
Note also that the local finiteness of~$\Gs$ implies that~$\T$ is at most denumerable.
We define the \emph{graph of train-tracks} of~$\Gs$, denoted by~$\Gs^\T$, as the
planar embedded graph~$(\GR)^*$.

By construction, its set of vertices~$\Vs^\T$ corresponds to the edges of~$\Gs$ (hence the
forthcoming notation~$e\in\Vs^\T$), while its faces correspond to
vertices and faces of~$\Gs$. By definition, the graph~$\Gs$ and its dual~$\Gs^*$ define the same graph of train-tracks~$\Gs^\T=(\Gs^*)^\T$, which is~$4$-regular: a typical edge~$v_1v_2\in\Es$ and corresponding train-tracks~$t_1,t_2\in\T$ are illustrated in Figure~\ref{fig:quad}.

We now turn to the general case, where~$\Gs$ can admit one or several
unbounded faces. In such a case, first construct the associated graph~$(\GR)^*$
as above, considering each unbounded face as a vertex of the dual graph~$\Gs^*$.
Then, define~$\Gs^\T$ as the subgraph of~$(\GR)^*$ obtained by removing all
edges included in an unbounded face of~$\Gs$; finally, define~$\T$ as the set of
maximal paths in~$\Gs^\T$ crossing opposite sides of the quadrilaterals. (Note
that the embedding of~$(\GR)^*$ into the plane might depend on the embedding
of~$\Gs^*$, but the planar embedded graph~$\Gs^\T\subset\RR^2$ is fully
determined by~$\Gs\subset\RR^2$.) By construction, its set of vertices~$\Vs^\T$
corresponds to the
edges of $\Gs$,
while its set of faces~$\Fs^\T$ corresponds to \emph{inner vertices}, not
adjacent to an unbounded face, and (bounded) \emph{inner faces} of~$\Gs$.
Note that in general, the graph~$\Gs^\T$ is no longer~$4$-regular: indeed, unbounded faces give rise to vertices of degree~$3$ and~$2$, where one train-track, or both, stop in the middle of the corresponding quadrilateral.
Note finally that in the presence of unbounded faces, the graph~$\Gs$ and its dual~$\Gs^*$ no longer define the same graph of train-tracks. However, this notion remains (locally) self-dual away from unbounded faces.
We refer to Figure~\ref{fig:g_quad-graph_train_tracks} for an illustration of a graph of train-tracks with all the possible pathologies.

\begin{figure}[ht]
  \centering
  \def\svgwidth{12cm}
\begingroup%
  \makeatletter%
  \providecommand\color[2][]{%
    \errmessage{(Inkscape) Color is used for the text in Inkscape, but the package 'color.sty' is not loaded}%
    \renewcommand\color[2][]{}%
  }%
  \providecommand\transparent[1]{%
    \errmessage{(Inkscape) Transparency is used (non-zero) for the text in Inkscape, but the package 'transparent.sty' is not loaded}%
    \renewcommand\transparent[1]{}%
  }%
  \providecommand\rotatebox[2]{#2}%
  \newcommand*\fsize{\dimexpr\f@size pt\relax}%
  \newcommand*\lineheight[1]{\fontsize{\fsize}{#1\fsize}\selectfont}%
  \ifx\svgwidth\undefined%
    \setlength{\unitlength}{1227.001143bp}%
    \ifx\svgscale\undefined%
      \relax%
    \else%
      \setlength{\unitlength}{\unitlength * \real{\svgscale}}%
    \fi%
  \else%
    \setlength{\unitlength}{\svgwidth}%
  \fi%
  \global\let\svgwidth\undefined%
  \global\let\svgscale\undefined%
  \makeatother%
  \begin{picture}(1,0.44095426)%
    \lineheight{1}%
    \setlength\tabcolsep{0pt}%
    \put(0,0){\includegraphics[width=\unitlength,page=1]{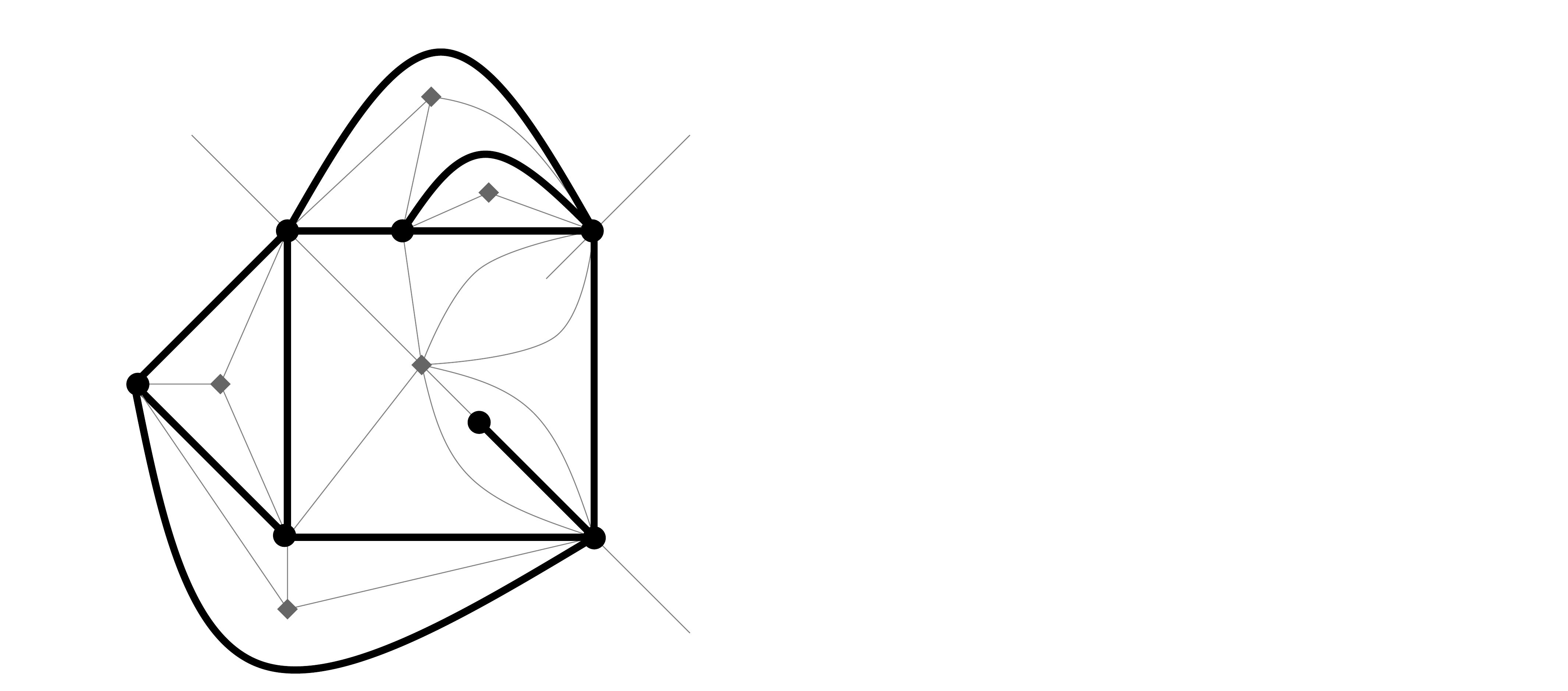}}%
    \put(0.4167757,0.11497404){\color[rgb]{0,0,0}\makebox(0,0)[lt]{\lineheight{1.25}\smash{\begin{tabular}[t]{l}$d$\end{tabular}}}}%
    \put(0,0){\includegraphics[width=\unitlength,page=2]{g_quadgraph_train_tracks.pdf}}%
    \put(0.03780296,0.07829926){\color[rgb]{0,0,0}\makebox(0,0)[lt]{\lineheight{1.25}\smash{\begin{tabular}[t]{l}$b$\end{tabular}}}}%
    \put(0.02853279,0.30446043){\color[rgb]{0,0,0}\makebox(0,0)[lt]{\lineheight{1.25}\smash{\begin{tabular}[t]{l}$a$\end{tabular}}}}%
    \put(0.43002733,0.26167319){\color[rgb]{0,0,0}\makebox(0,0)[lt]{\lineheight{1.25}\smash{\begin{tabular}[t]{l}$c$\end{tabular}}}}%
    \put(0,0){\includegraphics[width=\unitlength,page=3]{g_quadgraph_train_tracks.pdf}}%
  \end{picture}%
\endgroup%

  \caption{Left: a planar graph $\Gs$ (black). The dual $\Gs^*$ has
    vertices corresponding to bounded faces, marked with grey diamonds, and an
    additional vertex for the unbounded face of $\Gs$, which is here at infinity.
    The quad-graph $\GR$ is represented with grey edges. Those with a dashed end are
    connected to the dual vertex at infinity.
    Train-tracks of $\Gs$, paths of adjacent quadrilaterals, are materialized by
    colored lines ($a$: blue, $b$: green,
$c$: orange, $d$: cyan). Right: the corresponding graph of train-tracks $\Gs^\T$.}
\label{fig:g_quad-graph_train_tracks}
\end{figure}

We now come to orientation of train-tracks. Each train-track~$t\in\T$ can be oriented in two directions: we let~$\tl,\tr$ denote its oriented versions, and $\Torient$ be the set of such oriented train-tracks.
\new{Given an inner oriented edge $(v_1,v_2)$ of $\Gs$ and an oriented train-track~$\tr$ crossing it,
we say that these orientations  are \emph{coherent} if~$(v_1,v_2)$ and~$\tr$ define the positive (counterclockwise) orientation of the plane, see Figure~\ref{fig:quad}.
We morever fix the subscripts $1,2$ so that $\tr_1,\tr_2$ also defines the positive orientation of the plane.}

\begin{figure}[ht]
\centering
\def\svgwidth{4cm}
\begingroup%
  \makeatletter%
  \providecommand\color[2][]{%
    \errmessage{(Inkscape) Color is used for the text in Inkscape, but the package 'color.sty' is not loaded}%
    \renewcommand\color[2][]{}%
  }%
  \providecommand\transparent[1]{%
    \errmessage{(Inkscape) Transparency is used (non-zero) for the text in Inkscape, but the package 'transparent.sty' is not loaded}%
    \renewcommand\transparent[1]{}%
  }%
  \providecommand\rotatebox[2]{#2}%
  \newcommand*\fsize{\dimexpr\f@size pt\relax}%
  \newcommand*\lineheight[1]{\fontsize{\fsize}{#1\fsize}\selectfont}%
  \ifx\svgwidth\undefined%
    \setlength{\unitlength}{103.77946005bp}%
    \ifx\svgscale\undefined%
      \relax%
    \else%
      \setlength{\unitlength}{\unitlength * \real{\svgscale}}%
    \fi%
  \else%
    \setlength{\unitlength}{\svgwidth}%
  \fi%
  \global\let\svgwidth\undefined%
  \global\let\svgscale\undefined%
  \makeatother%
  \begin{picture}(1,0.78046644)%
    \lineheight{1}%
    \setlength\tabcolsep{0pt}%
    \put(0,0){\includegraphics[width=\unitlength,page=1]{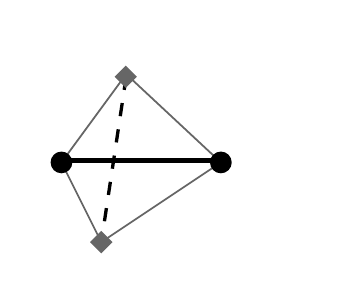}}%
    \put(0.03756421,0.31864368){\color[rgb]{0,0,0}\makebox(0,0)[lt]{\lineheight{1.25}\smash{\begin{tabular}[t]{l}$v_1$\end{tabular}}}}%
    \put(0.67434347,0.31877835){\color[rgb]{0,0,0}\makebox(0,0)[lt]{\lineheight{1.25}\smash{\begin{tabular}[t]{l}$v_2$\end{tabular}}}}%
    \put(0,0){\includegraphics[width=\unitlength,page=2]{fig_quadri.pdf}}%
    \put(0.37644358,0.58861182){\color[rgb]{0,0,0}\makebox(0,0)[lt]{\lineheight{1.25}\smash{\begin{tabular}[t]{l}$f_2$\end{tabular}}}}%
    \put(0,0){\includegraphics[width=\unitlength,page=3]{fig_quadri.pdf}}%
    \put(0.29593672,0.01545441){\color[rgb]{0,0,0}\makebox(0,0)[lt]{\lineheight{1.25}\smash{\begin{tabular}[t]{l}$f_1$\end{tabular}}}}%
    \put(0.79433151,0.67222096){\color[rgb]{0,0,0}\makebox(0,0)[lt]{\lineheight{1.25}\smash{\begin{tabular}[t]{l}$\tr_1$\end{tabular}}}}%
    \put(-0.00492099,0.72358474){\color[rgb]{0,0,0}\makebox(0,0)[lt]{\lineheight{1.25}\smash{\begin{tabular}[t]{l}$\tr_2$\end{tabular}}}}%
  \end{picture}%
\endgroup%

\caption{An edge \new{$v_1v_2$} of~$\Gs$ and the two corresponding train-tracks \new{oriented so that $(v_1,v_2)$ and $\tr_1$, resp. $\tr_2$, are coherent, and labeled so that $\tr_1,\tr_2$ defines a positive orientation of the plane.}}
\label{fig:quad}
\end{figure}

\new{In the case of bipartite graphs, there exists a natural orientation of train-tracks, defined as follows.}
Recall that a graph~$\Gs$ is \emph{bipartite} if its vertex set can be partitioned into two disjoint sets~$\Vs=\Bs\sqcup\Ws$ of \emph{black} and \emph{white} vertices, such that each edge of~$\Gs$ joins a black vertex $b$ to a white one $w$. Then, a  \new{\emph{consistent}} orientation of the train-tracks is obtained by
requiring train-tracks to turn counterclockwise around white vertices and clockwise around black ones.
\new{By construction, this is the unique orientation which is coherent at every edge with the
orientation of the edge from the white to the black vertex.}

Without any hypothesis on~$\Gs$, the associated train-tracks can display all
sorts of behavior: for example, a train-track might form a closed loop, it might
intersect itself, and two train-tracks might intersect more than once. As mentioned in the introduction, the main
result of~\cite{KeSchlenk} is that a graph~$\Gs$ admits an isoradial embedding if and only if its train-tracks never behave in this
way. The main theorem of Section~\ref{sec:flat} should be seen as a result of
the same type: a graph~$\Gs$ admits a more general geometric realization, called
\emph{isoradial immersion}, if and only if its train-tracks do not form closed
loops. Finally, the main theorem of Section~\ref{sec:minimal} is of the same
flavor: a bipartite graph~$\Gs$ \new{with consistently oriented train-tracks} admits a so-called \emph{minimal immersion} if
and only if its \new{oriented} train-tracks do not form closed loops, self-intersections, or
parallel bigons, \new{where a \emph{parallel bigon} is a pair of oriented train-tracks that intersect twice at two points $x$ and $y$ and both are oriented from $x$ to $y$~\cite{Thurston}. The three forbidden configurations are} illustrated in Figure~\ref{fig:forbidden_tt}.

\begin{figure}[ht]
  \centering
  \def\svgwidth{8cm}
\begingroup%
  \makeatletter%
  \providecommand\color[2][]{%
    \errmessage{(Inkscape) Color is used for the text in Inkscape, but the package 'color.sty' is not loaded}%
    \renewcommand\color[2][]{}%
  }%
  \providecommand\transparent[1]{%
    \errmessage{(Inkscape) Transparency is used (non-zero) for the text in Inkscape, but the package 'transparent.sty' is not loaded}%
    \renewcommand\transparent[1]{}%
  }%
  \providecommand\rotatebox[2]{#2}%
  \newcommand*\fsize{\dimexpr\f@size pt\relax}%
  \newcommand*\lineheight[1]{\fontsize{\fsize}{#1\fsize}\selectfont}%
  \ifx\svgwidth\undefined%
    \setlength{\unitlength}{602.75340958bp}%
    \ifx\svgscale\undefined%
      \relax%
    \else%
      \setlength{\unitlength}{\unitlength * \real{\svgscale}}%
    \fi%
  \else%
    \setlength{\unitlength}{\svgwidth}%
  \fi%
  \global\let\svgwidth\undefined%
  \global\let\svgscale\undefined%
  \makeatother%
  \begin{picture}(1,0.40188218)%
    \lineheight{1}%
    \setlength\tabcolsep{0pt}%
    \put(0,0){\includegraphics[width=\unitlength,page=1]{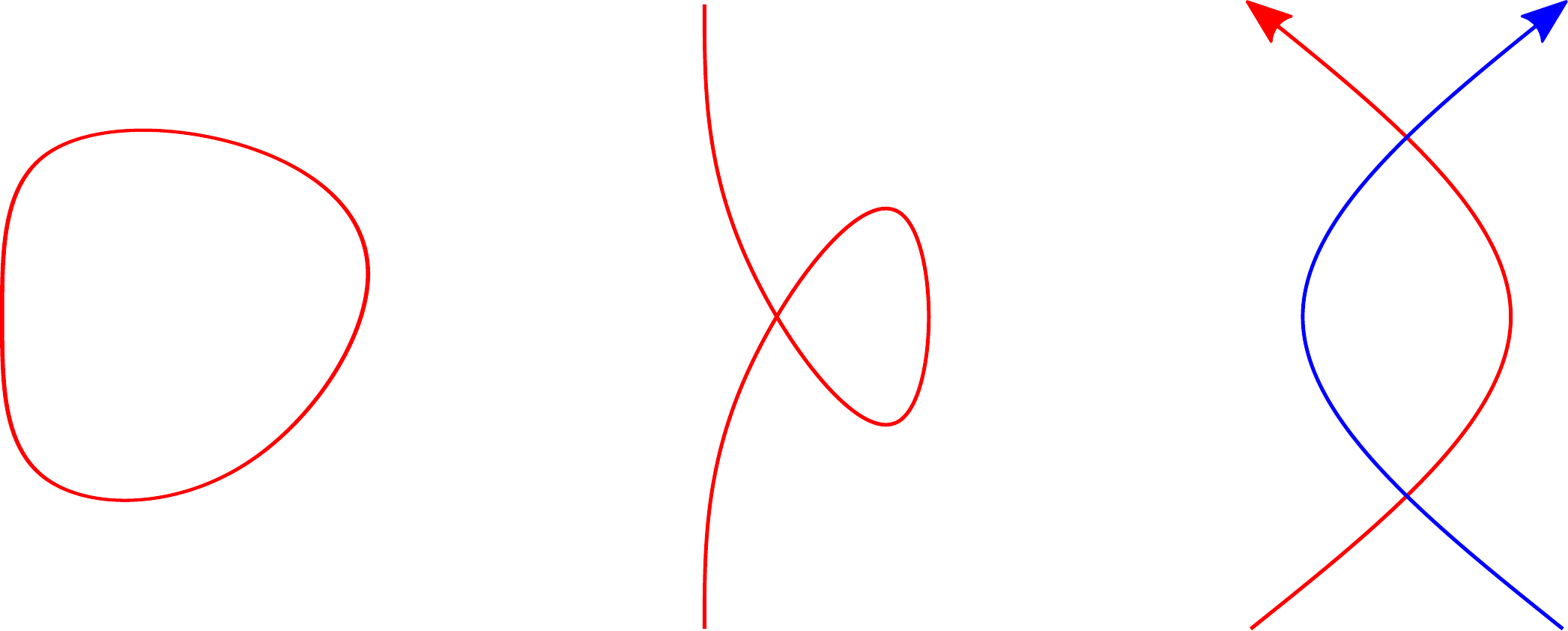}}%
    \put(0.88230427,0.03889341){\color[rgb]{0,0,0}\makebox(0,0)[lt]{\lineheight{1.25}\smash{\begin{tabular}[t]{l}$x$\end{tabular}}}}%
    \put(0.88728172,0.34001157){\color[rgb]{0,0,0}\makebox(0,0)[lt]{\lineheight{1.25}\smash{\begin{tabular}[t]{l}$y$\end{tabular}}}}%
  \end{picture}%
\endgroup%

  \caption{\new{Train-tracks form} a closed loop (left), a self-intersection
  (middle), or a parallel bigon (right),
\new{if~$\Gs^\T$ contains the corresponding graph as a minor.
}}
  \label{fig:forbidden_tt}
\end{figure}

\subsection{Cycles in the graph of train-tracks}
\label{sub:cycles}

The basic definitions being given, we now turn to the first result of this article, Lemma~\ref{lem:corner}, a fairly technical combinatorial statement. This lemma is used in Section~\ref{sec:flat} to describe the space of
solutions of a linear system indexed by vertices of $\GR$. But we believe that
under its general form here, it is interesting in itself and may have applications
to other contexts.

Following the standard terminology, we say that a finite subgraph of an arbitrary
abstract graph~$\mathsf{\Gamma}$ is a \emph{cycle}, resp. a \emph{simple cycle}, if all of
its vertices are of even degree, resp. of degree~$2$. We denote by
$\mathcal{E}(\mathsf{\Gamma})$ the set of cycles of $\mathsf{\Gamma}$ ($\mathcal{E}$ stands for
\emph{even}).
Recall that the symmetric
difference endows $\mathcal{E}(\mathsf{\Gamma})$ with the structure of
an~$\mathbb{F}_2$-vector space.

An intermediate step for Lemma~\ref{lem:corner} consists in exhibiting
a bijection
between the cycles of $\Gs^\T$ and those of a natural auxiliary graph. Before
establishing this bijection in Lemma~\ref{lem:phiiso}, we discuss some features of 
cycles of $\Gs^\T$. 

Vertices of a cycle~$c\in\mathcal{E}(\Gs^\T)$
fall into three categories:
degree~$4$ vertices
where by definition,~$c$ locally follows two (non-necessarily distinct)
intersecting train-tracks; degree~$2$ vertices where~$c$ stays on the same
train-track, \emph{i.e.}, where it crosses opposite sides of the corresponding
quadrilateral; and degree~$2$ vertices where~$c$ switches train-tracks,\emph{
i.e.}, where it ``turns'' inside the quadrilateral. We call these latter
vertices \emph{corners} of~$c$. 

From the above observations we define a procedure for drawing $c$ on the graph, giving a 
unique decomposition of $c$ into \emph{closed curves}:
start from an edge of $c$,
follow the cycle in one of the two possible directions by going straight at
each intersection.
When coming back to the starting point, we have covered a cycle component of $c$ in our
decomposition, referred to as a \emph{closed curve}.
If the whole of $c$ has been covered,
stop; else remove this component from $c$, and iterate the procedure from an
uncovered edge of $c$. Since the
cycle is finite, the algorithm of course
terminates.
Note also that specifying
the type of behavior at the degree 4 vertices yields
uniqueness of the decomposition of $c$ into closed curves,
and that the behaviour chosen minimizes the number of corners:
the total number of corners in all closed curves appearing in this decomposition of
$c$ is the same as the number of corners of $c$ itself,
see Figure~\ref{fig:cycles_decomp} for an example.

\begin{figure}[ht]
  \centering
  \def\svgwidth{10cm}
\begingroup%
  \makeatletter%
  \providecommand\color[2][]{%
    \errmessage{(Inkscape) Color is used for the text in Inkscape, but the package 'color.sty' is not loaded}%
    \renewcommand\color[2][]{}%
  }%
  \providecommand\transparent[1]{%
    \errmessage{(Inkscape) Transparency is used (non-zero) for the text in Inkscape, but the package 'transparent.sty' is not loaded}%
    \renewcommand\transparent[1]{}%
  }%
  \providecommand\rotatebox[2]{#2}%
  \newcommand*\fsize{\dimexpr\f@size pt\relax}%
  \newcommand*\lineheight[1]{\fontsize{\fsize}{#1\fsize}\selectfont}%
  \ifx\svgwidth\undefined%
    \setlength{\unitlength}{317.62767504bp}%
    \ifx\svgscale\undefined%
      \relax%
    \else%
      \setlength{\unitlength}{\unitlength * \real{\svgscale}}%
    \fi%
  \else%
    \setlength{\unitlength}{\svgwidth}%
  \fi%
  \global\let\svgwidth\undefined%
  \global\let\svgscale\undefined%
  \makeatother%
  \begin{picture}(1,0.28335063)%
    \lineheight{1}%
    \setlength\tabcolsep{0pt}%
    \put(0,0){\includegraphics[width=\unitlength,page=1]{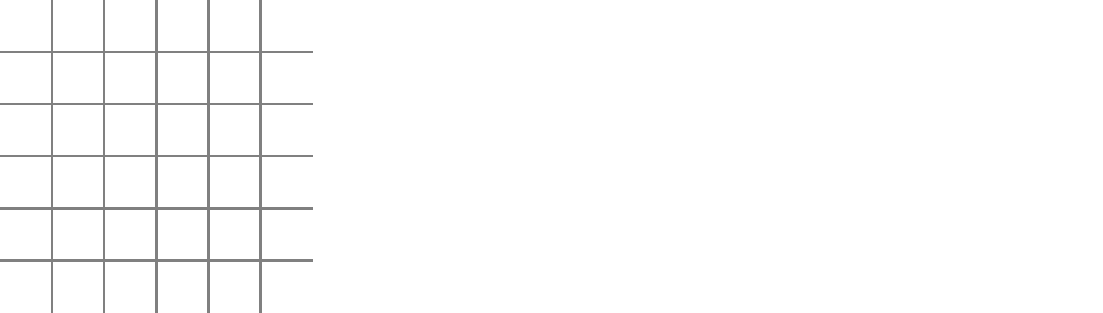}}%
    \put(0.36363332,0.10962972){\color[rgb]{0,0,0}\makebox(0,0)[lt]{\lineheight{1.25}\smash{\begin{tabular}[t]{l}$=$\end{tabular}}}}%
    \put(0.73671156,0.11806276){\color[rgb]{0,0,0}\makebox(0,0)[lt]{\lineheight{1.25}\smash{\begin{tabular}[t]{l}$+$\end{tabular}}}}%
    \put(0,0){\includegraphics[width=\unitlength,page=2]{cycle_decomposition.pdf}}%
  \end{picture}%
\endgroup%

  \caption{Example of a cycle (left) with corners partitioned into two parts (with
    different shades of purple), corresponding to the corners of the closed
  curves (right) entering into its canonical decomposition into closed curves.}
  \label{fig:cycles_decomp}
\end{figure}

When none of the train-tracks of $\Gs$ form a closed loop,
any closed curve has at least one corner.
In this case, we relate cycles on $\Gs^\T$ with cycles on another, auxiliary abstract
graph $\Gs'=(\Vs',\Es')$, constructed from $\Gs^\T$ by setting $\Vs'=\T$ and
$\Es'=\Vs^\T$. 
More precisely, vertices of~$\Gs'$ are given by the train-tracks, and each
intersection in~$\Gs^\T$ between two train-tracks (resp. self-intersection of
one train-track) defines an edge between the two corresponding vertices
of~$\Gs'$ (resp. a loop at the corresponding vertex of~$\Gs'$),
see Figure~\ref{fig:auxiliary_graph}.

\begin{figure}[ht]
\centering
\def\svgwidth{5cm}
\begingroup%
  \makeatletter%
  \providecommand\color[2][]{%
    \errmessage{(Inkscape) Color is used for the text in Inkscape, but the package 'color.sty' is not loaded}%
    \renewcommand\color[2][]{}%
  }%
  \providecommand\transparent[1]{%
    \errmessage{(Inkscape) Transparency is used (non-zero) for the text in Inkscape, but the package 'transparent.sty' is not loaded}%
    \renewcommand\transparent[1]{}%
  }%
  \providecommand\rotatebox[2]{#2}%
  \newcommand*\fsize{\dimexpr\f@size pt\relax}%
  \newcommand*\lineheight[1]{\fontsize{\fsize}{#1\fsize}\selectfont}%
  \ifx\svgwidth\undefined%
    \setlength{\unitlength}{280.99889592bp}%
    \ifx\svgscale\undefined%
      \relax%
    \else%
      \setlength{\unitlength}{\unitlength * \real{\svgscale}}%
    \fi%
  \else%
    \setlength{\unitlength}{\svgwidth}%
  \fi%
  \global\let\svgwidth\undefined%
  \global\let\svgscale\undefined%
  \makeatother%
  \begin{picture}(1,0.86872273)%
    \lineheight{1}%
    \setlength\tabcolsep{0pt}%
    \put(0,0){\includegraphics[width=\unitlength,page=1]{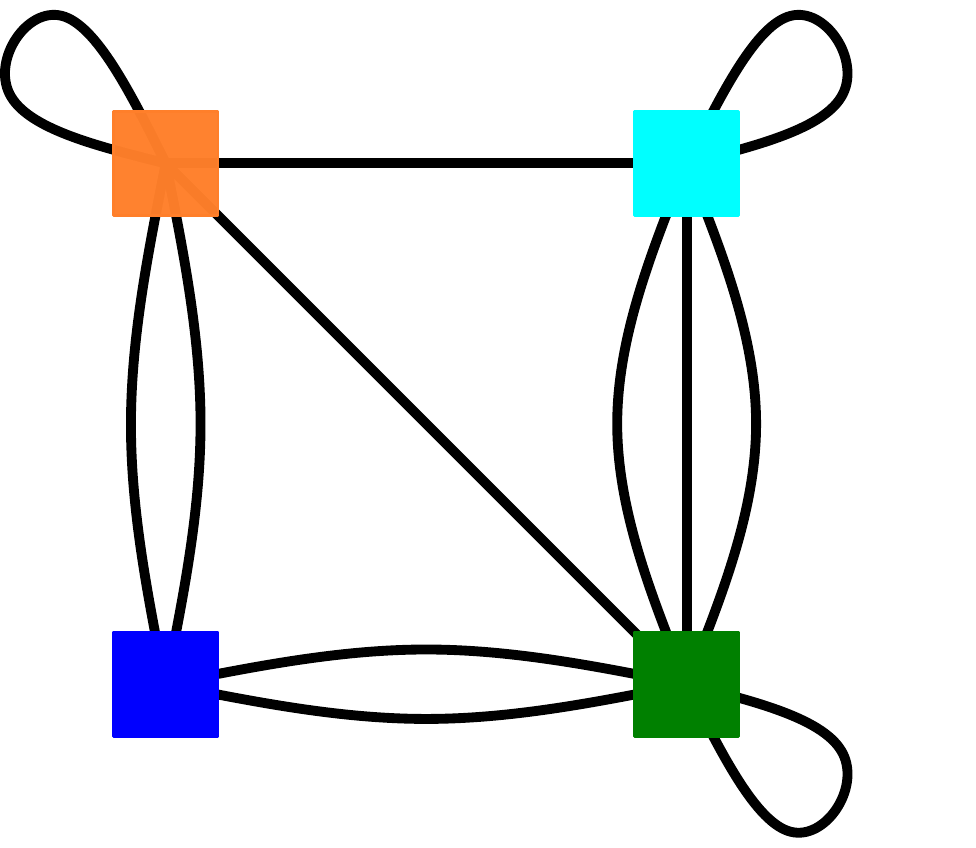}}%
    \put(0.80519066,0.66390001){\color[rgb]{0,0,0}\makebox(0,0)[lt]{\lineheight{1.25}\smash{\begin{tabular}[t]{l}$d$\end{tabular}}}}%
    \put(0.79451414,0.16745645){\color[rgb]{0,0,0}\makebox(0,0)[lt]{\lineheight{1.25}\smash{\begin{tabular}[t]{l}$b$\end{tabular}}}}%
    \put(0.00981381,0.16745645){\color[rgb]{0,0,0}\makebox(0,0)[lt]{\lineheight{1.25}\smash{\begin{tabular}[t]{l}$a$\end{tabular}}}}%
    \put(0.00447569,0.66389978){\color[rgb]{0,0,0}\makebox(0,0)[lt]{\lineheight{1.25}\smash{\begin{tabular}[t]{l}$c$\end{tabular}}}}%
  \end{picture}%
\endgroup%

\caption{The auxiliary graph $\Gs'$ appearing in the proof of
  Lemma~\ref{lem:corner}, for the example of graph $\Gs$ given in
Figure~\ref{fig:g_quad-graph_train_tracks}.}
\label{fig:auxiliary_graph}
\end{figure}

Observe that any closed curve~$c\subset\Gs^\T$ defines a
cycle~$\varphi(c)\subset\Gs'$ with the corners of~$c$ corresponding to edges
of~$\varphi(c)$. We use the unique decomposition of cycles into closed curves described above to extend $\varphi$ on $\mathcal{E}(\Gs^\T)$ by declaring that $\varphi$
sends a cycle to the sum of the images of its components in that particular
decomposition.

\begin{lem}
  \label{lem:phiiso}
  Let $\Gs^\T$ be a connected graph of train-tracks, none of which forms a closed loop.
  Then, the function $\varphi\colon\mathcal{E}(\Gs^\T)\rightarrow\mathcal{E}(\Gs')$
  is an isomorphism of $\mathbb{F}_2$-vector spaces.
\end{lem}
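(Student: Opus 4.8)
The plan is to reinterpret $\varphi$ through a purely local description at the vertices of $\Gs^\T$, deduce well-definedness and $\mathbb{F}_2$-linearity from it, and then settle injectivity and surjectivity separately. First I would record that, since the canonical decomposition partitions both the edges and the corners of a cycle $c$ among its closed curves and distinct corners are distinct vertices of $\Gs^\T$, the value $\varphi(c)$ is simply the set of corners of $c$, viewed as a subset of $\Es'=\Vs^\T$. Fix a vertex $e\in\Vs^\T$ with its two crossing train-tracks $t,t'$, and let $a_1,a_3$ (resp.\ $a_2,a_4$) be the indicators that $c$ uses the two sides of the corresponding quadrilateral lying on $t$ (resp.\ on $t'$), in cyclic order. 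The function $a_1+a_3\in\mathbb{F}_2$ is linear in the whole edge-indicator of $c$, and on even-degree configurations it coincides with $a_2+a_4$ and equals $1$ exactly when $c$ uses an adjacent pair of sides, i.e.\ exactly when $e$ is a corner. This is the crucial point, because the definition of $\varphi$ via a chosen decomposition does \emph{not} make additivity under symmetric difference manifest; the linearity of the corner indicator $a_1+a_3$ is precisely what gives $\mathbb{F}_2$-linearity of $\varphi$.

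Next I would verify that $\varphi(c)$ is a genuine cycle of $\Gs'$, i.e.\ that each vertex $t_0\in\Vs'=\T$ has even degree. Since no train-track forms a closed loop, the edges of $\Gs^\T$ lying on $t_0$ form a discrete line, and the restriction $c|_{t_0}$ is a finite set of its intervals, whose number of boundary points is therefore even. It remains to match boundary points with degree: at an ordinary vertex of $t_0$ a boundary point is exactly a corner incident to $t_0$ contributing $1$ to $\deg_{\varphi(c)}(t_0)$, whereas at a self-intersection the identity $a_1+a_3=a_2+a_4$ forces the two passages to be boundary points simultaneously, so a self-intersection corner accounts for the two units of degree coming from the corresponding loop of $\Gs'$. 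Hence $\deg_{\varphi(c)}(t_0)$ equals the (even) number of boundary points of $c|_{t_0}$.

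For injectivity, suppose $\varphi(c)=0$, i.e.\ $c$ has no corner. In its decomposition each closed curve then goes straight at every vertex, hence follows a single train-track and closes up into a loop of that train-track, contradicting the hypothesis unless there are no components at all; thus $c=0$. For surjectivity, by linearity it suffices to realise every simple cycle $\gamma$ of $\Gs'$. Writing $\gamma$ as a cyclic sequence of train-tracks $t_0,\dots,t_{k-1}$ joined by its edges (corners) $e_0,\dots,e_{k-1}$, I would build a closed curve $c_\gamma$ by concatenating, for each $i$, the unique segment of the line $t_i$ between the consecutive corners $e_{i-1}$ and $e_i$, turning from $t_i$ to $t_{i+1}$ at each $e_i$; the degenerate cases $k=1$ (a loop at a self-intersection) and $k=2$ (a bigon) are treated the same way. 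Because each edge of $\Gs^\T$ lies on a single train-track and the $t_i$ are pairwise distinct, these segments are edge-disjoint, so $c_\gamma$ has degree $2$ at the corners $e_i$ and at its straight interior vertices and degree $4$ wherever two segments cross transversally; in particular $c_\gamma\in\mathcal{E}(\Gs^\T)$, its set of corners is exactly $\{e_0,\dots,e_{k-1}\}$, and $\varphi(c_\gamma)=\gamma$.

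The main obstacle is this surjectivity step: one must exhibit an honest even subgraph of $\Gs^\T$ with a prescribed corner set, and check carefully that the only vertices created by the construction are transversal degree-$4$ crossings (which are never corners, consistent with $a_1+a_3=0$ there) and that no spurious corners appear. The delicate bookkeeping occurs at train-track self-intersections, where a single vertex of $\Gs^\T$ carries all four sides on the same train-track; the identity $a_1+a_3=a_2+a_4$ on even-degree local configurations is exactly what makes these cases behave correctly, both here and in the degree computation above. The connectedness of $\Gs^\T$ is a standing hypothesis ensuring that $\Gs'$ is connected, so that the isomorphism is stated between the full cycle spaces.
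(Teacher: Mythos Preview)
Your proof is correct and, for injectivity, takes a genuinely different route from the paper. The paper establishes injectivity by a dimension count: in the finite case one computes~$\dim\mathcal{E}(\Gs^\T)$ and~$\dim\mathcal{E}(\Gs')$ via Euler characteristics (using connectedness and the relation~$2|\Vs_2|+|\Vs_3|=2|\T|$ coming from the no-loop hypothesis), observes they agree, and then reduces the infinite case to the finite one by restricting to a finite connected subgraph containing a putative kernel element. Your argument is more direct: a cycle with empty corner set decomposes into closed curves that go straight at every vertex, hence each follows a single train-track monotonically and can only close up if that train-track is a loop. This avoids Euler characteristic entirely, handles the infinite case uniformly, and in fact does not use the connectedness hypothesis at all (your closing remark about connectedness is therefore unnecessary and slightly misleading). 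For linearity and surjectivity the two arguments are essentially the same, though your local formula~$a_1+a_3$ for the corner indicator makes the~$\mathbb{F}_2$-linearity more transparent than the paper's parity-under-decomposition argument, and extends without change to vertices of degree~$2$ or~$3$ in~$\Gs^\T$ by setting absent edge-indicators to zero.
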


\begin{proof}
  First, $\varphi$ is indeed a linear map. This can be checked by noticing that when
  decomposing a cycle $c$ as a sum of smaller cycles $c=\sum_i c_i$, the
  cardinality of the set of indices $i$ such that $e\in\Vs^\T$ is a corner of $c_i$
  is odd if $e$ is a corner of $c$, and even if not. Because we look
  at linear combinations with coefficients mod 2, this is enough to conclude
  that $\varphi(c)=\sum_i\varphi(c_i)$.

  The application $\varphi$ is clearly onto:
  all simple cycles of~$\Gs'$ can be realized, and they generate the full space
  of cycles~$\mathcal{E}(\Gs')$. We now wish to show that~$\varphi$ is injective.

Let us first assume that~$\Gs^\T$ is finite. In such a case, the auxiliary graph~$\Gs'$ is finite as well. Since we assume~$\Gs^\T$ to be connected, so is~$\Gs'$, and we can determine the dimension of~$\mathcal{E}(\Gs')$ using the standard computation
\[
1-\dim(\mathcal{E}(\Gs'))=\chi(\Gs')=|\Vs'|-|\Es'|=|\T|-|\Vs^\T|\,,
\]
where~$\chi$ stands for the Euler characteristic. 

To evaluate the dimension of~$\mathcal{E}(\Gs^\T)$,
let us denote
by~$\Vs^\T=\Vs_2\sqcup\Vs_3\sqcup\Vs_4$ the partition of the vertices of~$\Gs^\T$ into vertices of degree~$2$,~$3$ and~$4$. We have the obvious equality~$2|\Vs_2|+3|\Vs_3|+4|\Vs_4|=2|\Es^\T|$, and the slightly less obvious one~$2|\Vs_2|+|\Vs_3|=2|\T|$ which uses the fact that~$\Gs^\T$ has no closed loops. Together, they lead to the equation~$2|\Vs^\T|=|\Es^\T|+|\T|$, giving
\begin{equation}\label{equ:euler}
1-\dim(\mathcal{E}(\Gs^\T))=\chi(\Gs^\T)=|\Vs^\T|-|\Es^\T|=|\T|-|\Vs^\T|\,.
\end{equation}

By the two equations displayed above, we see that~$\varphi\colon\mathcal{E}(\Gs^\T)\to\mathcal{E}(\Gs')$ is a surjective linear map between finite dimensional vector spaces of the same dimension, and therefore an isomorphism.

The general (possibly infinite) case can be seen as a consequence of the finite case, as follows. Since~$\varphi=\varphi_\Gs\colon\mathcal{E}(\Gs^\T)\to\mathcal{E}(\Gs')$ is already known to be a surjective linear map, we are left with proving that the only~$c\in\mathcal{E}(\Gs^\T)$ such that~$\varphi(c)$ vanishes is~$c=0$, the empty cycle. Such a~$c$ is contained in a finite connected subgraph~$\Hs^\T$ of~$\Gs^\T$; let~$\Hs'$ be the corresponding auxiliary graph, and~$\varphi_\Hs\colon\mathcal{E}(\Hs^\T)\to\mathcal{E}(\Hs')$ be the corresponding linear map, which we know is an isomorphism since~$\Hs^\T$ is finite, connected, and contains no closed loop. By naturality of~$\varphi$, the following diagram commutes
\[
\begin{tikzcd}
\mathcal{E}(\Gs^\T)\arrow[r, "\varphi_\Gs"]& \mathcal{E}(\Gs')\\
\mathcal{E}(\Hs^\T)\arrow[u,"j"]\arrow[r, "\varphi_\Hs"]& \mathcal{E}(\Hs')\arrow[u,"j'"]\,,
\end{tikzcd}
\]
where the vertical arrows are the inclusions of spaces of cycles induced by the inclusion of subgraphs. As mentioned, there exists~$c_\Hs\in\mathcal{E}(\Hs^\T)$ such that~$c=j(c_\Hs)$, leading to
\[
0=\varphi_\Gs(c)=\varphi_\Gs(j(c_\Hs))=j'(\varphi_\Hs(c_\Hs))\,.
\]
Since~$j'\circ\varphi_\Hs$ is injective, it follows that~$c_\Hs$ is equal to~$0$, and so is~$c=j(c_\Hs)$.
\end{proof}

Using the isomorphism $\varphi$ to transport a natural basis of
$\mathcal{E}(\Gs')$ to $\mathcal{E}(\Gs^\T)$ yields the following combinatorial
statement about $\Gs^\T$.

\begin{lem}\label{lem:corner}
Let~$\Gs^\T$ be a connected graph of train-tracks, none of which forms a closed loop. Then, there exists a set~$M\subset\Vs^\T$, an injective map~$\tau\colon M\to\T$, and a basis~\new{$(c_e)_{e\in\Vs^\T\setminus M}$} of~$\mathcal{E}(\Gs^\T)$ consisting of closed curves indexed by the vertices of~$\Vs^\T\setminus M$, such that:
\begin{enumerate}
\item{each vertex~$e\in M$ belongs to the associated train-track~$\tau(e)\in\T$, and the image of~$\tau$ consists of all train-tracks but one;}
\item{\new{for every $e\in\Vs^\T\setminus M$, the vertex $e$ is a corner of~$c_e$, and all other corners of~$c_e$} belong to~$M$.
}
\end{enumerate}
\end{lem}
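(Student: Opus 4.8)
The plan is to transport, via the isomorphism $\varphi$ of Lemma~\ref{lem:phiiso}, the standard spanning-tree basis of the cycle space $\mathcal{E}(\Gs')$ back to $\mathcal{E}(\Gs^\T)$, and then to read off the statement from the combinatorics of the tree. Since $\Gs^\T$ is connected, the auxiliary graph $\Gs'$ is connected as well (as already observed in the proof of Lemma~\ref{lem:phiiso}), so it admits a spanning tree $T'$; in the infinite case such a tree exists by a standard Zorn's lemma argument. Recalling that $\Es'=\Vs^\T$, the edge set of $T'$ is a subset $M\subset\Vs^\T$, and this will be our set $M$. I would then root $T'$ at an arbitrary vertex $r\in\T=\Vs'$ and define $\tau\colon M\to\T$ by sending each tree edge to its endpoint farther from the root. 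Because every vertex of a tree is joined to the root by a unique finite path, each non-root vertex has a well-defined parent edge, so $\tau$ is a bijection from $M$ onto $\T\setminus\{r\}$; in particular it is injective and its image is all train-tracks but one, giving the first half of condition~(1). Moreover, each tree edge $e$ joins the two train-tracks crossing at the intersection point $e\in\Vs^\T$, and $\tau(e)$ is one of them, so the vertex $e$ of $\Gs^\T$ lies on $\tau(e)$, completing condition~(1).

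For the basis, I would use the fundamental cycles: for each non-tree edge $e\in\Vs^\T\setminus M$, let $\gamma_e\in\mathcal{E}(\Gs')$ be the fundamental cycle, equal to $e$ together with the unique tree path joining its endpoints (or, if $e$ is a loop of $\Gs'$ coming from a self-intersection, simply $\{e\}$). It is standard that $(\gamma_e)_{e\in\Vs^\T\setminus M}$ is a basis of $\mathcal{E}(\Gs')$, and this remains valid in the infinite case, since every finite cycle differs from a finite sum of fundamental cycles by a cycle supported on $T'$, hence by $0$. Each $\gamma_e$ is a simple cycle (or a single loop), so by the surjectivity argument of Lemma~\ref{lem:phiiso} it is the image of a closed curve; injectivity of $\varphi$ then shows that $c_e:=\varphi^{-1}(\gamma_e)$ is this unique closed curve. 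As $\varphi$ is an isomorphism, $(c_e)_{e\in\Vs^\T\setminus M}$ is a basis of $\mathcal{E}(\Gs^\T)$ consisting of closed curves, indexed by $\Vs^\T\setminus M$ as required.

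Condition~(2) would then follow directly from the defining property of $\varphi$: the corners of a closed curve correspond exactly to the edges of its image cycle. Thus the corners of $c_e$ are precisely the edges of $\gamma_e$, namely $e$ itself together with tree edges. Since the tree edges all lie in $M$ and $e\notin M$, the vertex $e$ is a corner of $c_e$ and every other corner belongs to $M$.

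The main obstacle I anticipate is the careful justification that $\varphi^{-1}(\gamma_e)$ is a single closed curve rather than a nontrivial sum. This is where the ``onto'' part of Lemma~\ref{lem:phiiso} is essential: one must check that a simple cycle of $\Gs'$ is genuinely realized by tracing a closed curve that turns exactly at the corners prescribed by the edges of $\gamma_e$, and that the loop case (self-intersection) produces the expected one-corner closed curve following the corresponding train-track. Once realizability is granted, injectivity of $\varphi$ makes the preimage unique and forces it to be a closed curve, and the remaining verifications are the routine tree bookkeeping described above; the infinite case adds only the mild points of existence of a spanning tree and of the fundamental-cycle basis, both of which carry over without difficulty.
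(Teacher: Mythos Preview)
Your proposal is correct and essentially identical to the paper's proof: the paper likewise chooses a spanning tree of~$\Gs'$ rooted at some~$t_0$, takes~$M$ to be its edge set, defines~$\tau$ by sending each tree edge to its endpoint away from the root, and sets~$c_e=\varphi^{-1}$ of the fundamental cycle through~$e$. The only minor difference is that for the point you flag as the main obstacle---showing~$c_e$ is a single closed curve---the paper argues directly (a simple cycle in~$\Gs'$ has exactly two edges at each vertex~$t$, so consecutive corners of~$c_e$ are joined by the train-track~$t$, forcing the decomposition to have one component) rather than appealing to surjectivity-plus-injectivity as you do; both arguments are valid.
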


Recall that we use the notation $e$ for vertices of $\Gs^\T$ because they correspond to edges of $\Gs$. This statement is quite technical, but its translation in plain English less so, at least in the finite case: there exists a set\new{~$M$} of~$|\T|-1$ \new{``marked vertices''
($M$ stands for ``marked'')} such that each vertex $e$ in this subset has a distinct train-track $\tau(e)$ associated by the map $\tau$; and a basis~\new{$(c_e)_{e\in\Vs^\T\setminus M}$} of cycles of~$\Gs^\T$ consisting of closed curves, such that one can mark all vertices of~$\Gs^\T$ starting from~\new{$M$} and then marking the unique missing corner of each closed curve~$c_e$.
Note that this statement is easily seen not to hold when some train-track makes a loop.

\begin{proof}[Proof of Lemma~\ref{lem:corner}]
Since~$\Gs^\T$ is connected, so is~$\Gs'$ which therefore contains a spanning tree~$\Ts'$, rooted at an arbitrary vertex~$t_0\in\Vs'=\T$.
Let~$M\subset \Vs^\T=\Es'$ denote the set of vertices of~$\Gs^\T$ corresponding to the edges of~$\Ts'$, and~$\tau\colon M\to\T$ be the map defined by associating to each edge of~$\Ts'$ its adjacent vertex in the direction opposite to the root~$t_0$. Since~$\Ts'$ has no cycle, the map~$\tau$ is injective, and since~$\Ts'$ spans all vertices, the image of~$\tau$ is~$\T\setminus\{t_0\}$. This shows the first point.

To conclude the proof, \new{write~$\varepsilon_e$ for the element of~$\Es'\setminus\Ts'$  corresponding to~$e\in\Vs^\T\setminus M$, let~$\varepsilon'_e\in\mathcal{E}(\Gs')$ be the unique (simple) cycle in~$\Ts\cup\{\varepsilon_e\}$, and set~$c_e=\varphi^{-1}(\varepsilon'_e)\in\mathcal{E}(\Gs^\T)$}. The crucial fact is that $c_e$ is a closed curve. Indeed, $\varepsilon'_e$ is a simple cycle.
Every pair of edges of $\varepsilon'_e$ connected to the same vertex $t$ correspond
to subsequent corners of $c_e$ connected by the train-track $t$. Therefore, when performing the decomposition procedure into closed curves on the cycle $c_e$, we obtain a unique component, implying that $c_e$ is indeed a closed curve. 

Since~\new{$(\varepsilon'_e)_{e\in\Vs^\T\setminus M}$} is a basis of~$\mathcal{E}(\Gs')$
and~$\varphi$ an isomorphism,~\new{$(c_e)_{e\in\Vs^\T\setminus M}$} is a basis of~$\mathcal{E}(\Gs^\T)$. Via the isomorphism~$\varphi$, the statement of the second point translates into the following tautology: for all~\new{$e\in\Vs^\T\setminus M$}, the edge~$\varepsilon_e$ belongs to~$\varepsilon'_e$ and all other edges of~$\varepsilon'_e$ belong to~$\Ts'$.
\end{proof}

\subsection{Train-tracks in minimal bipartite graphs}
\label{sub:ttmin}

In the whole of this section, we suppose that the planar embedded graph~$\Gs$ is
bipartite, and has no unbounded faces. Recall from Section~\ref{sub:gen_def} that the 
train-tracks of~$\Gs$ can be consistently oriented so that black vertices are on the right, and white vertices on the left of the directed paths. Recall also that the above orientation is 
\new{coherent at every edge} with the orientation of train-tracks arising from oriented edges, directed from white vertices to black ones. 
We denote by~$\Tbip$ the corresponding set of oriented train-tracks,
which is nothing but a copy of~$\T$, 
with all train-tracks consistently oriented.

In most of the present section and Section~\ref{sec:minimal}, we focus on a special class of planar bipartite graphs known as \emph{minimal graphs}, introduced by Thurston~\cite{Thurston}. These graphs are central in the seminal paper
~\cite{GK} by Goncharov--Kenyon;
they also occur in string theory
where they are referred to as \emph{consistent dimer models}, see~\cite{Gulotta,Akira_Ueda}.

\begin{defi}
\label{defi:min}
A planar, embedded, bipartite graph~$\Gs$ is said to be \emph{minimal} if~$\Gs^\T$
contains neither self-intersecting train-tracks, nor parallel bigons, see
Figure~\ref{fig:forbidden_tt}.
\end{defi}

Throughout the rest of this section, we assume that the train-tracks of~$\Gs$ do not form closed loops,
a condition which always holds for minimal graphs. Indeed, with our conventions,
no connected component of~$\Gs^\T$ consists of a simple closed curve; therefore, a closed loop will
either self-intersect, or meet another train-track and thus form a parallel bigon.
Our goal is to prove two combinatorial results on the order of train-tracks of minimal bipartite graphs.

When the graph $\Gs$ is $\mathbb{Z}^2$-periodic,
a partial cyclic order on train-tracks is defined as follows:
the projection of train-tracks
on the quotient graph $\Gs/\mathbb{Z}^2$ drawn on a torus are non-trivial
oriented loops whose homology class in $H_1(\TT;\ZZ)\simeq \ZZ^2$ is given by
a pair of coprimes integers. The total cyclic order on such pairs thus induces
a natural (partial) cyclic order on train-tracks, which has
been exploited in the papers~\cite{Gulotta,Akira_Ueda,GK}.

We now extend this partial cyclic order to train-tracks of arbitrary
bipartite planar graphs with neither train-track loops nor unbounded faces.
Note that these conditions are equivalent to requiring that all train-tracks
are bi-infinite (oriented) curves.

First, we say that two oriented train-tracks (or more generally, two non-closed oriented planar curves) are \emph{parallel} if they satisfy one of the following conditions:
\begin{itemize}
\item{they intersect infinitely many times in the same direction, see
  Figure~\ref{fig:parallel_tt}, left,}
\item{they are disjoint, and there exists a topological disc~$D\subset\RR^2$
  that they cross in the same direction, see Figure~\ref{fig:parallel_tt}, right.}
\end{itemize}
Two oriented train-tracks~$\tr_1,\tr_2$ are called \emph{anti-parallel} if~$\tr_1$ and~$\tl_2$ are parallel.

\begin{figure}[ht]
  \centering
  \def\svgwidth{8cm}
\begingroup%
  \makeatletter%
  \providecommand\color[2][]{%
    \errmessage{(Inkscape) Color is used for the text in Inkscape, but the package 'color.sty' is not loaded}%
    \renewcommand\color[2][]{}%
  }%
  \providecommand\transparent[1]{%
    \errmessage{(Inkscape) Transparency is used (non-zero) for the text in Inkscape, but the package 'transparent.sty' is not loaded}%
    \renewcommand\transparent[1]{}%
  }%
  \providecommand\rotatebox[2]{#2}%
  \newcommand*\fsize{\dimexpr\f@size pt\relax}%
  \newcommand*\lineheight[1]{\fontsize{\fsize}{#1\fsize}\selectfont}%
  \ifx\svgwidth\undefined%
    \setlength{\unitlength}{556.44907659bp}%
    \ifx\svgscale\undefined%
      \relax%
    \else%
      \setlength{\unitlength}{\unitlength * \real{\svgscale}}%
    \fi%
  \else%
    \setlength{\unitlength}{\svgwidth}%
  \fi%
  \global\let\svgwidth\undefined%
  \global\let\svgscale\undefined%
  \makeatother%
  \begin{picture}(1,0.54634502)%
    \lineheight{1}%
    \setlength\tabcolsep{0pt}%
    \put(0,0){\includegraphics[width=\unitlength,page=1]{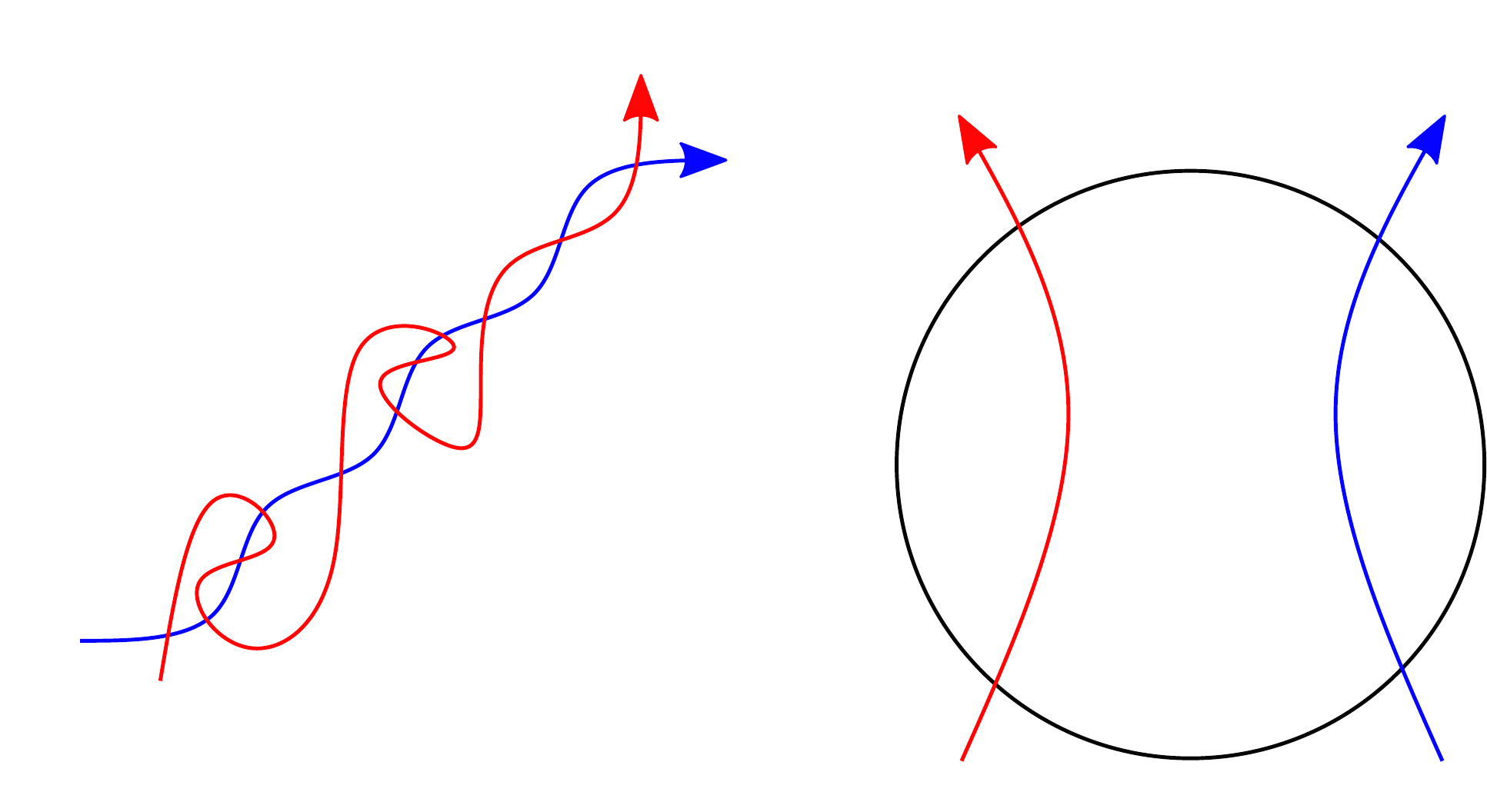}}%
    \put(0.79252515,0.34956145){\color[rgb]{0,0,0}\makebox(0,0)[lt]{\lineheight{1.25}\smash{\begin{tabular}[t]{l}$D$\end{tabular}}}}%
    \put(0,0){\includegraphics[width=\unitlength,page=2]{parallel_train_tracks.pdf}}%
  \end{picture}%
\endgroup%

  \caption{Parallel train-tracks.}
  \label{fig:parallel_tt}
\end{figure}

Let us now consider a triple of oriented train-tracks~$(\tr_1,\tr_2,\tr_3)$ of~$\Gs$, pairwise non-parallel.
\new{Consider a compact disk~$B$ outside of which these train-tracks do not meet, apart from possible anti-parallel ones,
and order~$(\tr_1,\tr_2,\tr_3)$ cyclically according to the outgoing points of the corresponding oriented curves in the circle~$\partial B$. Note that since the three oriented train-tracks are pairwise non-parallel, this cyclic order does not depend on the choice of the disk~$B$.}

\begin{defi}
\label{def:order}
We call this partial cyclic order on~$\Tbip$ the \emph{global cyclic order}.
\end{defi}

\begin{rem}\label{rem:order}$\,$
\begin{itemize}
 \item[$\bullet$]
   When $\Gs$ is $\ZZ^2$-periodic, it is easy to see that this notion of partial order coincides with the one described above. In this context, being parallel (resp. anti-parallel) is equivalent to having the same homology class (resp. opposite homology classes).
 \item In~\cite{KeSchlenk}, Kenyon and Schlenker consider a (non-necessarily bipartite) planar graph~$\Gs$ whose train-tracks do not form closed loops, do not self-intersect, and such that any two train-tracks intersect at most once. The authors construct a topological embedding of~$\Gs^\T$ into the unit disc such that the image of each train-track is a smooth path connecting distinct boundary points of the disc, see~\cite[Lemma~3.2]{KeSchlenk}. This construction can be adapted to our setting, yielding a continuous map
 from $\Gs^\T$ to the unit disk, where parallel train-tracks connect the same boundary points of the disc. Furthermore, the (partial) cyclic order on~$\Tbip$ given by the outgoing points of oriented train-tracks in the unit circle coincides with the global cyclic order defined above.
\end{itemize}
\end{rem}

The first result is the elementary, yet fundamental observation that for minimal graphs, this global cyclic order on~$\Tbip$ induces the standard cyclic order around any vertex. More formally, consider a bipartite planar graph~$\Gs$, and a vertex~$v\in\Vs$ of degree~$n\ge 1$.
The~$n$ incident edges are crossed by a set $\Tbip(v)$ of~$n$ oriented train-tracks strands,
each one joining two consecutive edges, see Figure~\ref{fig:t_bip_v} (left). In general, it can happen that two of these strands belong to the same train-track, e.g. in the case of multiple edges. Furthermore, there is no reason in general for the global cyclic order on~$\Tbip$ to restrict to the \emph{local cyclic order} on~$\Tbip(v)$, \emph{i.e.}, the total cyclic order given by outgoing points of these strands. This actually holds if~$\Gs$ is minimal as stated by the following lemma. Note that in the $\ZZ^2$-periodic case, this is essentially the content of Lemma 4.2. of~\cite{Akira_Ueda}. 

\begin{figure}[htb]
  \centering
  \def\svgwidth{10cm}
\begingroup%
  \makeatletter%
  \providecommand\color[2][]{%
    \errmessage{(Inkscape) Color is used for the text in Inkscape, but the package 'color.sty' is not loaded}%
    \renewcommand\color[2][]{}%
  }%
  \providecommand\transparent[1]{%
    \errmessage{(Inkscape) Transparency is used (non-zero) for the text in Inkscape, but the package 'transparent.sty' is not loaded}%
    \renewcommand\transparent[1]{}%
  }%
  \providecommand\rotatebox[2]{#2}%
  \newcommand*\fsize{\dimexpr\f@size pt\relax}%
  \newcommand*\lineheight[1]{\fontsize{\fsize}{#1\fsize}\selectfont}%
  \ifx\svgwidth\undefined%
    \setlength{\unitlength}{363.09954815bp}%
    \ifx\svgscale\undefined%
      \relax%
    \else%
      \setlength{\unitlength}{\unitlength * \real{\svgscale}}%
    \fi%
  \else%
    \setlength{\unitlength}{\svgwidth}%
  \fi%
  \global\let\svgwidth\undefined%
  \global\let\svgscale\undefined%
  \makeatother%
  \begin{picture}(1,0.40552087)%
    \lineheight{1}%
    \setlength\tabcolsep{0pt}%
    \put(0,0){\includegraphics[width=\unitlength,page=1]{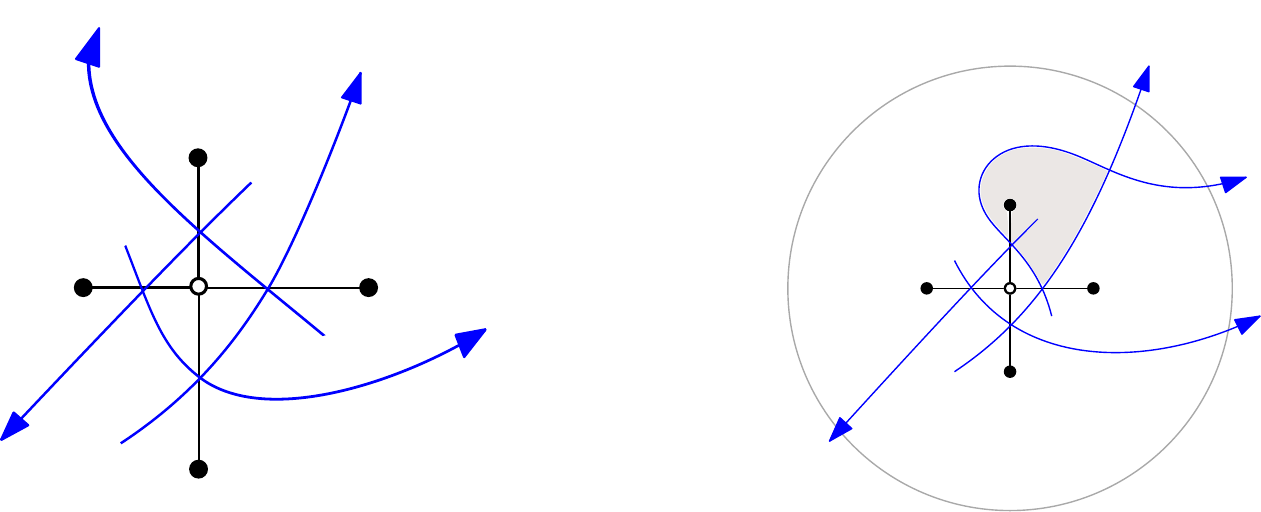}}%
    \put(0.25839025,0.18747075){\color[rgb]{0,0,0}\makebox(0,0)[lt]{\lineheight{1.25}\smash{\begin{tabular}[t]{l}$e_1$\end{tabular}}}}%
    \put(0.12068526,0.25167053){\color[rgb]{0,0,0}\makebox(0,0)[lt]{\lineheight{1.25}\smash{\begin{tabular}[t]{l}$e_2$\end{tabular}}}}%
    \put(0.29237606,0.3588384){\color[rgb]{0,0,0}\makebox(0,0)[lt]{\lineheight{1.25}\smash{\begin{tabular}[t]{l}$\vec{t}_1$\end{tabular}}}}%
    \put(0.0043158,0.38096185){\color[rgb]{0,0,0}\makebox(0,0)[lt]{\lineheight{1.25}\smash{\begin{tabular}[t]{l}$\vec{t}_2$\end{tabular}}}}%
    \put(0.36372134,0.10004705){\color[rgb]{0,0,0}\makebox(0,0)[lt]{\lineheight{1.25}\smash{\begin{tabular}[t]{l}$\vec{t}_n$\end{tabular}}}}%
    \put(0.16126753,0.04991777){\color[rgb]{0,0,0}\makebox(0,0)[lt]{\lineheight{1.25}\smash{\begin{tabular}[t]{l}$e_n$\end{tabular}}}}%
    \put(0.16212861,0.18357971){\color[rgb]{0,0,0}\makebox(0,0)[lt]{\lineheight{1.25}\smash{\begin{tabular}[t]{l}$v$\end{tabular}}}}%
  \end{picture}%
\endgroup%

  \caption{Left: the set $\Tbip(v)$ of oriented train-tracks strands around  the (white) vertex~$v$. Right: Proof of Lemma~\ref{lem:vertex_1}.}
  \label{fig:t_bip_v}
\end{figure}

\begin{lem}
\label{lem:vertex_1}
Let~$\Gs$ be a planar, bipartite, minimal graph, and let~$v$ be an arbitrary vertex of~$\Gs$.
Then, the elements of~$\Tbip(v)$ belong to distinct
train-tracks, and the global cyclic order on~$\Tbip$ restricts to the local cyclic order on~$\Tbip(v)$.
\end{lem}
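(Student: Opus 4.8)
The plan is to prove both conclusions by ruling out the two forbidden configurations (self-intersections and parallel bigons), exploiting the minimality hypothesis together with the consistent orientation. First I would argue that the elements of $\Tbip(v)$ belong to distinct train-tracks. Suppose two strands $\tr_i, \tr_j \in \Tbip(v)$ lie on the same train-track $t$. Since these two strands both pass through quadrilaterals incident to the single vertex $v$ and are distinct pieces of the same curve, the curve $t$ must leave the neighborhood of $v$ and return to it; following $t$ between the two strands produces either a self-intersection of $t$, or, since $\Gs$ is bipartite and $t$ is consistently oriented, a return that closes up against itself in a way incompatible with minimality. The key point is that around a fixed vertex $v$ of a given color, all strands in $\Tbip(v)$ turn the \emph{same} way (all counterclockwise if $v$ is white, all clockwise if $v$ is black), by the definition of the consistent orientation; this coherence of turning is exactly what forces a repeated visit by a single train-track to create a forbidden pattern.

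For the second conclusion, that the global cyclic order restricts to the local one on $\Tbip(v)$, I would use the topological model from Remark~\ref{rem:order}: adapting the Kenyon--Schlenker embedding of $\Gs^\T$ into the unit disc, each oriented train-track becomes a smooth oriented arc joining two boundary points, and the global cyclic order is read off from the outgoing boundary points. The local cyclic order on $\Tbip(v)$ is the cyclic order in which the $n$ strands emanate from the small region around $v$, equivalently the cyclic order of the incident edges $e_1,\dots,e_n$. I would show these two orders agree by proving that no two strands $\tr_i, \tr_j$ of $\Tbip(v)$ can have their outgoing boundary points ``out of order'' relative to their emanation order at $v$: such an inversion would force the arcs for $\tr_i$ and $\tr_j$ to cross an even total number of extra times between $v$ and the boundary, and the parities, combined with the consistent orientation making all strands at $v$ coherent, would yield either an intersection of $\tr_i$ and $\tr_j$ in the wrong direction (a parallel bigon) or a self-intersection—both excluded by minimality. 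Figure~\ref{fig:t_bip_v}~(right) presumably illustrates exactly this inversion argument.

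I would set this up concretely by considering three strands $\tr_i, \tr_j, \tr_k$ and comparing their local cyclic order at $v$ with their global cyclic order, reducing to the statement that a local inversion of an adjacent pair produces a bigon. The cleanest phrasing: take two strands adjacent in the local order at $v$ and suppose they are inverted in the global order; then the two corresponding arcs, both emanating coherently from $v$ and both reaching $\partial B$, must cross at least once away from $v$, and because they already meet ``at'' $v$ in a coherent fashion, this crossing together with the configuration at $v$ assembles into a parallel bigon (both train-tracks oriented the same way across the two intersection regions). Minimality forbids this, so adjacent pairs are never inverted, and since the orders are cyclic orders on the same finite set, agreement on adjacent pairs propagates to full agreement.

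The main obstacle I expect is making rigorous the claim that a local-order inversion genuinely produces a \emph{parallel} bigon (same-direction double intersection) rather than an anti-parallel one, which is permitted. This is where the consistent orientation is essential and must be used carefully: because $v$ has a single color, all strands of $\Tbip(v)$ wind around $v$ in the same rotational sense, so the two arcs in question are \emph{parallel} near $v$, and an inversion forces a same-direction crossing elsewhere, closing up a parallel (not anti-parallel) bigon. Pinning down this orientation bookkeeping—ideally via the disc model of Remark~\ref{rem:order} where ``parallel'' means ``sharing both boundary endpoints'' and direction is tracked by the orientation of the arc—is the delicate step; the rest is a relatively routine combinatorial consequence of minimality.
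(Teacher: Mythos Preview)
Your proposal is correct and follows essentially the same approach as the paper: both parts are proved by contradiction, showing that a failure would force either a self-intersection or a parallel bigon, which minimality excludes. The paper's own proof is in fact shorter and sketchier than yours---for the first part it simply observes that connecting the outgoing point of one strand in~$\Tbip(v)$ to the ingoing point of another necessarily creates one of the forbidden patterns, and for the second part it just asserts (with a reference to the figure) that a mismatch between local and global orders ``inevitably creates a parallel bigon''. Your invocation of the disc model from Remark~\ref{rem:order} and your emphasis on the consistent orientation at~$v$ are exactly the right ingredients and make the argument more explicit than the paper does.

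One small imprecision worth tightening: your phrase ``two strands adjacent in the local order \dots\ inverted in the global order'' is not quite well-posed, since a cyclic order compares \emph{triples}, not pairs, so a single pair cannot be ``inverted''. What you want is: if the two cyclic orders disagree, there exist three strands~$\tr_a,\tr_b,\tr_c\in\Tbip(v)$ with~$[\tr_a,\tr_b,\tr_c]$ locally but~$[\tr_a,\tr_c,\tr_b]$ globally; then~$\tr_b$, whose outgoing ray near~$v$ lies between those of~$\tr_a$ and~$\tr_c$, must cross one of them again on its way to~$\partial B$, and the coherent orientation at~$v$ forces this second crossing to be parallel to the first, yielding the forbidden bigon. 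With this rephrasing the ``propagation'' step becomes unnecessary. This is a cosmetic fix; the substance of your plan matches the paper's.
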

\begin{proof}
Fix a vertex~$v$ of a minimal bipartite graph~$\Gs$,
and consider the set~$\Tbip(v)$ of adjacent train-tracks strands
as illustrated in Figure~\ref{fig:t_bip_v} (left).
First, observe that in order for two of these strands to belong to the same train-track, we need to connect the outgoing point of one of these strands with the ingoing point of another one. As one easily checks, this either introduces a self-intersection or a parallel bigon, thus contradicting minimality of~$\Gs$.
Finally, assume by means of contradiction that the global cyclic order on~$\Tbip$ does not restrict to the local cyclic order on~$\Tbip(v)$. Then, this inevitably creates a parallel bigon, see e.g. Figure~\ref{fig:t_bip_v} (right), contradicting the minimality of~$\Gs$, and concluding the proof.
\end{proof}

Our second result deals with the local order of train-tracks around faces.
Since our graph~$\Gs$ is bipartite, any face~$f$ of~$\Gs$ is of even degree, say~$2m$; in the degenerate case of a degree~$1$ vertex inside the face, the adjacent edge is counted twice.

The $2m$ edges bounding $f$
are crossed by a set~$\Tbip(f)$ of~$2m$ oriented
train-tracks strands,
each one joining two consecutive boundary edges,
see Figure~\ref{fig:tt_around_face}. There is a natural partition~$\Tbip(f)=\Tbip^\bullet(f)\sqcup\Tbip^\circ(f)$, where~$\Tbip^\bullet(f)$ is the set of strands turning around a black vertex of~$\partial f$, \emph{i.e.}, turning counterclockwise around~$f$, and~$\Tbip^\circ(f)$ is the set of strands turning around a white vertex of~$\partial f$, \emph{i.e.}, clockwise around~$f$. In general, all the train-track strands in~$\Tbip^\bullet(f)$ can belong to the same train-track, and similarly for~$\Tbip^\circ(f)$:
a face of degree~$2$ is the easiest example. Furthermore, there is no reason in general for the global cyclic order on~$\Tbip$ to restrict to the \emph{local cyclic orders} on~$\Tbip^\bullet(f)$ and on~$\Tbip^\circ(f)$, \emph{i.e.}, the total cyclic orders given by outgoing points of these strands. Once again, this turns out to hold if~$\Gs$ is minimal.

\begin{figure}[ht]
  \centering
  \def\svgwidth{6cm}
  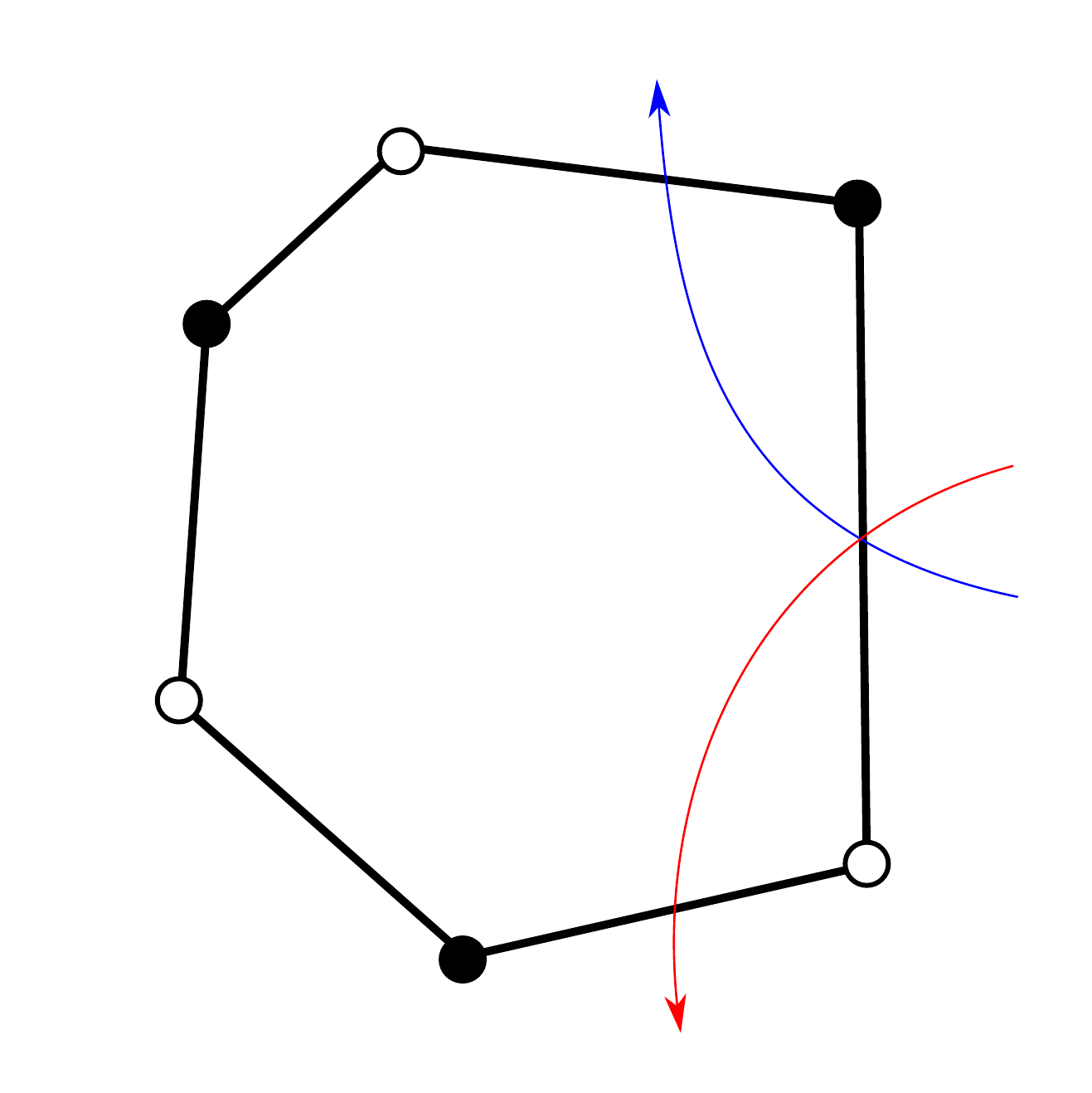
  \caption{The set $\Tbip(f)=
    \Tbip^{\circ}(f)\sqcup   \Tbip^{\bullet}(f)$.
  }
  \label{fig:tt_around_face}
\end{figure}

\begin{lem}
\label{lem:face_1}
Let~$\Gs$ be a planar, bipartite, minimal graph, and let~$f$ be an arbitrary face of~$\Gs$. Then, there is a pair of train-track strands in~$\Tbip^\bullet(f)$ which belong to distinct non-parallel train-tracks, and similarly for~$\Tbip^\circ(f)$. Furthermore, the global cyclic order on~$\Tbip$ restricts to the local cyclic orders on~$\Tbip^\bullet(f)$ and on~$\Tbip^\circ(f)$.
\end{lem}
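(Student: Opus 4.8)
The plan is to imitate the proof of Lemma~\ref{lem:vertex_1}, the key point being that all~$m$ strands of~$\Tbip^\bullet(f)$ are oriented counterclockwise around~$f$, while all~$m$ strands of~$\Tbip^\circ(f)$ are oriented clockwise; thus each of the two families is, on its own, in the same situation as the family~$\Tbip(v)$ of strands around a single vertex. Consequently it suffices to establish both assertions for~$\Tbip^\bullet(f)$, the case of~$\Tbip^\circ(f)$ being entirely analogous with clockwise and counterclockwise interchanged. Throughout I would work in the topological disc picture of Remark~\ref{rem:order}, in which each bi-infinite train-track becomes a simple arc joining two points of the boundary circle, two train-tracks are parallel exactly when their arcs share both endpoints, and the global cyclic order on~$\Tbip$ is read off from the outgoing endpoints on the circle.

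First I would record the nesting fact underlying everything: two distinct train-tracks that both cross~$\partial f$ at black corners, with~$f$ to the left of the two corresponding strands, are necessarily non-parallel. Indeed, parallel tracks correspond to disjoint nested arcs sharing both endpoints in the disc picture; the face bordering one arc on its left where it meets~$\partial f$ is then the lune between the two arcs, whereas for the other arc this left-region lies outside that lune, and a single face~$f$ cannot be both. If instead the two arcs crossed, the crossing together with their common counterclockwise passage around~$f$ would close up a parallel bigon, again excluded. Hence any two distinct tracks appearing in~$\Tbip^\bullet(f)$ are automatically non-parallel, and the existence of the desired pair reduces to showing that, for~$m\ge 2$, the~$m$ strands do not all belong to a single track. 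This last degeneracy I would rule out by minimality: a single simple train-track crossing~$\partial f$ at two corners with~$f$ always on its left must, together with an arc of~$\partial f$, enclose a disc across which a track through an intervening white corner is forced to run, producing a parallel bigon or a self-intersection, contrary to Definition~\ref{defi:min}. I expect this to be the main obstacle, since it is exactly here that the face case genuinely departs from the vertex case, where all strands were automatically distinct.

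It then remains to prove that the global cyclic order restricts to the local cyclic order on~$\Tbip^\bullet(f)$, which I would do exactly as in Lemma~\ref{lem:vertex_1}. A failure of this restriction can be localized to three strands lying on pairwise non-parallel tracks whose cyclic order around~$\partial f$ (the local order) is opposite to the cyclic order of their outgoing endpoints on the boundary circle (the global order). Two of these tracks would then be forced to cross ``the wrong way'' relative to their common counterclockwise passage around~$f$, and this crossing, together with the corner crossings on~$\partial f$, closes up a parallel bigon, contradicting minimality. Parallel strands, which occupy a single slot of the global order, are consistent with their local position by the nesting fact above, so they create no obstruction. Applying the mirror of this whole argument to the clockwise family~$\Tbip^\circ(f)$ then yields the statement for white corners and completes the proof.
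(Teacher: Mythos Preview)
Your proposal is correct and follows essentially the same route as the paper: argue by contradiction that a single train-track cannot supply all of~$\Tbip^\bullet(f)$ (because the non-self-intersecting track would trap a white-corner strand and force a parallel bigon), observe that two distinct tracks appearing in~$\Tbip^\bullet(f)$ cannot be parallel (because the face sits on the same side of both strands, which is impossible for disjoint parallel curves and forces a parallel bigon otherwise), and finally deduce the cyclic-order statement by the same ``wrong crossing~$\Rightarrow$ parallel bigon or self-intersection'' mechanism used in Lemma~\ref{lem:vertex_1}.

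The only noteworthy differences are expository. Where you invoke the disc picture of Remark~\ref{rem:order} and a lune argument to rule out parallelism, the paper simply remarks that two elements of~$\Tbip^\bullet(f)$ cross the face in anti-parallel fashion and hence cannot lie on parallel (necessarily disjoint, by minimality) train-tracks; and where you phrase the ``all one track'' case in terms of an intervening white-corner strand being forced across an enclosed disc, the paper points to a figure showing the strands connected cyclically around~$f$ and reads off the resulting bigon directly. Both pairs of arguments encode the same topological obstruction, so nothing is gained or lost either way.
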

\begin{proof}
  Let us fix a face~$f$ of a bipartite minimal graph~$\Gs$, and assume by means of contradiction that all train-track strands in~$\Tbip^\bullet(f)$ belong to the same train-track. Using the fact that train-tracks cannot self-intersect, we obtain that these strands are connected cyclically as illustrated in Figure~\ref{fig:bigons_around_face} (left). This creates a parallel bigon, thus contradicting the minimality of~$\Gs$. Furthermore, two elements of~$\Tbip^\bullet(f)$ cross the face~$f$ in anti-parallel fashion, so they cannot belong to parallel train-tracks 
  (which have to be disjoint since the graph is minimal). The same argument holds for~$\Tbip^\circ(f)$. Let us finally assume that the global cyclic order on~$\Tbip$ does not restrict to the local cyclic orders on~$\Tbip^\bullet(f)$ or on~$\Tbip^\circ(f)$. In such a case, we inevitably have a self-intersection or a parallel bigon, see e.g. Figure~\ref{fig:bigons_around_face} (right). This contradicts the minimality of~$\Gs$ and concludes the proof.
\end{proof}

  \begin{figure}[ht]
  \centering
  \def\svgwidth{10cm}
  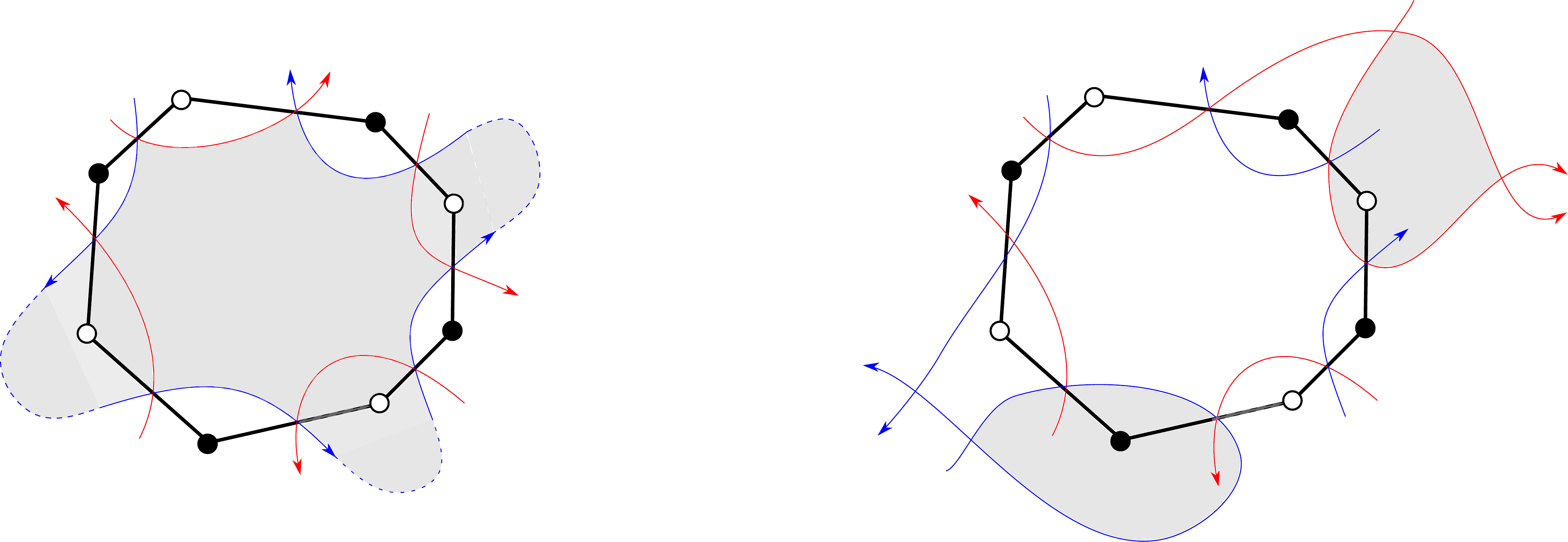
  \caption{Proof of Lemma~\ref{lem:face_1}.
    Left: strands of $\Tbip^\bullet(f)$ coming from the same train-track. Right:
    global cyclic order not restricting to local cyclic orders of $\Tbip^\circ(f)$
    and $\Tbip^\bullet(f)$. In both cases, shaded areas are bounded by parallel
    bigons or self-intersections.
  }
  \label{fig:bigons_around_face}
\end{figure}  

A natural question is whether some converse to these lemmas holds.

\begin{rem}
In~\cite{Akira_Ueda}, the authors prove the following statement: if a $\ZZ^2$-periodic bipartite planar graph~$\Gs$ is such that its train-tracks do not form closed loops, do not self-intersect, two parallel train-tracks are always disjoint, and for any vertex~$v$, the global cyclic order on~$\Tbip$ restricts to the local cyclic order on~$\Tbip(v)$, then~$\Gs$ is minimal. Note however that the biperiodicity is used in a crucial way, more precisely the fact that the train-tracks define a finite number of asymptotic directions.

As a consequence of Theorem~\ref{thm:min}, we actually obtain a converse to these two lemmas: if a bipartite planar graph~$\Gs$ has no train-track loops, no vertex of degree~$1$,
and is such that the conclusions of Lemmas~\ref{lem:vertex_1} and~\ref{lem:face_1} hold, then~$\Gs$ is minimal, see Corollary~\ref{cor:min} of Section~\ref{sec:minimal}.
\end{rem}

We conclude this section with the following:

\begin{question}
Given a minimal bipartite graph~$\Gs$, do the cyclic orders on~$\Tbip(v)$ for all~$v\in\Vs$ and on~$\Tbip^\bullet(f)$ and~$\Tbip^\circ(f)$ for all~$f\in\Fs$ determine the global cyclic order on~$\Tbip$?
\end{question}

As a consequence of our results, we will answer this question positively in
the~$\ZZ^2$-periodic case, see Corollary~\ref{cor:X=Y} of Section~\ref{sub:proofmin}.

\section{Flat isoradial immersions of planar graphs}
\label{sec:flat}

This section deals with a geometric notion, called \emph{isoradial immersion} and defined in Section~\ref{sub:def}, which extends the classical notion of isoradial embedding.
Our main result is stated in Section~\ref{sub:statement_main}; it is the pendent in this more general context of Kenyon and Schlenker's theorem~\cite{KeSchlenk};
the proof is the subject of Section~\ref{sub:proof_statement_main}. One last Section~\ref{sub:rem} proves additional features of our theory.

\subsection{Definitions}
\label{sub:def}

\new{We start this section by defining the notion of \emph{rhombic immersion}.}
Consider a planar graph $\Gs$, the corresponding quad-graph $\GR$, and the graph of train-tracks $\Gs^\T$. 
Suppose that each pair of directed train-tracks~$\tr,\tl\in\Torient$ is assigned a pair of opposite directions~$e^{i\alpha(\tr)}$ and~$e^{i\alpha(\tl)}=-e^{i\alpha(\tr)}$; the angle~$\alpha(\tr),\alpha(\tl)\in\RR/2\pi\ZZ$ are referred to as the \emph{angles of the train-tracks} $\tr,\tl$, and we denote by $Z_\Gs$ the set of possible angle maps, i.e.
\[
Z_\Gs=\{\alpha:\Torient\rightarrow \RR/2\pi\ZZ \,|\,\forall\,\tr\in \Torient,\  \alpha(\tl)=\alpha(\tr)+\pi \}\,.
\]

Every element $\alpha\in Z_\Gs$ defines an immersion of $\GR$ in
$\mathbb{R}^2$, by realizing every directed edge of $\GR$ crossed by a
train-track $\tr$ from left to right as a translation of the unit vector
$e^{i\alpha(\tr)}$. Every inner (quadrilateral) face of $\GR$ 
is mapped to a rhombus of unit-edge length in the plane. We call such an immersion a \emph{rhombic immersion} of $\GR$.
Given the cyclic order around vertices in $\GR$, the data of the rhombic
immersion is equivalent to that of~$\alpha$.

Every face of $\GR$ corresponds to an edge $e$ of $\Gs$ by construction.
Using the notation of Section~\ref{sub:gen_def} and Figure~\ref{fig:immersed_quad},
we associate to the oriented edge~$e=(v_1,v_2)$,
a \emph{rhombus angle}~$\theta_e\in[0,2\pi)$, which is the unique lift in~$[0,2\pi)$ of~$[\alpha(\tr_2)-\alpha(\tr_1)]\in\RR/2\pi\ZZ$, where the square bracket denotes the equivalence class of a real number in~$\RR/2\pi\ZZ$.
\new{Recall from Section~\ref{sub:gen_def} that the orientation of~$\tr_1$, resp. $\tr_2$, is chosen to be coherent with the orientation of~$e$ and that the subscripts $1,2$ are chosen so that $\tr_1,\tr_2$ defines a positive orientation of the plane}. This rhombus angle is independent of the orientation of the edge $e$, \new{as reversing the orientation of~$e$ also reverses the orientation of both train-tracks while keeping the subscripts fixed}; 
for an internal edge $e$,
it is the geometric angle of the rhombus image of the quadrilateral corresponding to $e$, at the vertices (corresponding to the images by the immersion of) $v_1$ and $v_2$.
The \emph{dual rhombus angle}, measured at vertices $f_1$ or $f_2$ is given by
$\theta_{e^*}=\pi-\theta_e$.

\begin{figure}[ht]
  \centering
  \def\svgwidth{6cm}
  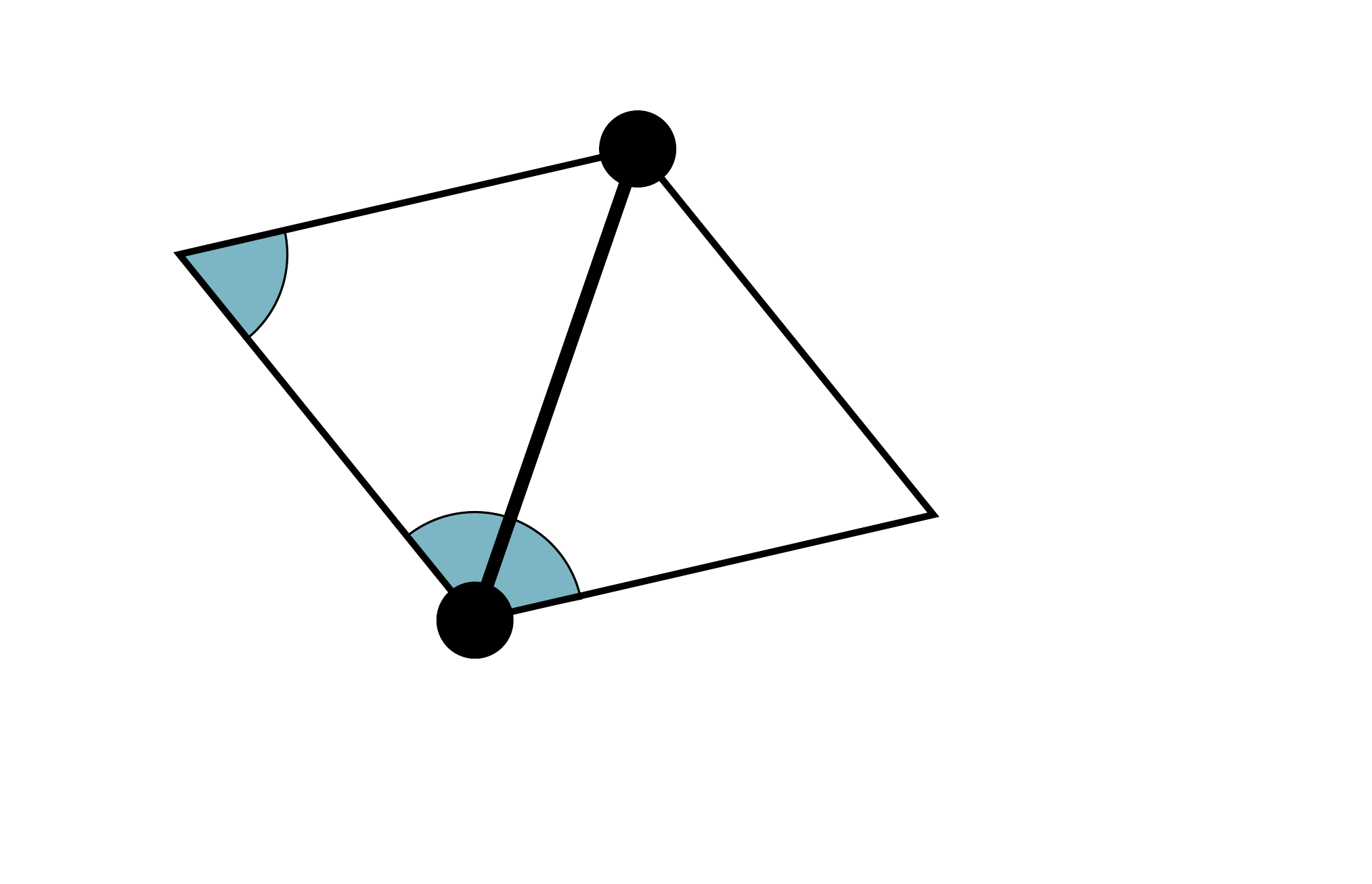
  \caption{The image by a rhombic immersion of an inner (quadrilateral) face of
    $\GR$, with vertices $v_1$ and $v_2$ corresponding to vertices of $\Gs$, and $f_1$ and $f_2$
    corresponding to faces of $\Gs$. Edges are represented by unit vectors associated to train-tracks.}
  \label{fig:immersed_quad}
\end{figure}

Note that in many papers, see e.g.~\cite{Kenyon:crit}, the notation~$\theta_e$ is used for the rhombus \emph{half-angle}, but in our setting
using $\theta_e$ for the rhombus \emph{angle} is more convenient. There are three types of rhombi: \emph{embedded} ones when~$\theta_e$ belongs to~$(0,\pi)$, \emph{folded} ones for~$\theta_e\in(\pi,2\pi)$, and \emph{degenerate} ones when~$\theta_e\in\{0,\pi\}$. 

To each edge $e$ of $\Gs$, we also assign an arbitrary lift~$\tilde{\theta}_e\in\RR$ of
$[\alpha(\vec{t_2})-\alpha(\vec{t_1})]\in\RR/2\pi\ZZ$.
Since~$\tilde{\theta}_e=\theta_e+2\pi k_e$ for a unique~$k_e\in\ZZ$, such a choice of lift amounts to the choice of an element~$k=(k_e)_{e\in\Es}\in\ZZ^\Es$. In a similar way as above, to each edge $e$ we associate a \emph{generalized rhombus} with angle $\tilde{\theta}_e$ at the adjacent vertices, and dual angle~$\tilde{\theta}_{e^*}=\pi-\tilde{\theta}_e$ at the adjacent dual vertices.
Geometrically, such a generalized rhombus should be thought of as being
iteratively folded along its diagonals, see Section~\ref{sub:rem} for more details.

\begin{defi}
  The \emph{isoradial immersion} of $\Gs$ defined by
  the train-track angles $\alpha\in Z_\Gs$
  and the integers~$k\in\ZZ^\Es$
  is the gluing of the generalized rhombi using the combinatorial information of
  $\Gs$.
\end{defi}

The surface obtained by gluing the (generalized) rhombi is naturally
equipped with a flat metric with conical singularities at inner vertices and faces.
On this surface, the graphs $\GR$ and $\Gs$ are naturally \emph{embedded}. When
projecting this surface onto the plane, the images of vertices and edges coincide
with the images of the rhombic immersion defined by $\alpha$.
We define the cone angle at a vertex (resp. face) singularity as the sum of the
lifts~$\tilde{\theta}_e$ (resp.~$\tilde{\theta}_{e^*}$) for all edges incident to this vertex
(resp. face), a total angle easily seen to be a multiple of~$2\pi$.

\begin{defi}
An isoradial immersion is said to be \emph{flat} if all the cone angles are equal to~$2\pi$, \emph{i.e.}, if
for any inner vertex~$v$ and any inner face~$f$ of~$\Gs$, we have
\begin{equation}\label{equ:flat}
\sum_{e\sim v} \tilde{\theta}_e=2\pi\quad \text{and}\quad \sum_{e^* \sim f} \tilde{\theta}_{e^*}=2\pi\,.
\end{equation}
\end{defi}

Here are some remarks on the above definitions.
\begin{rem}\label{rem:def_iso_imm}$\,$
\begin{enumerate}
  \item Given the embedding of $\GR$ in the plane, an isoradial immersion is
    equivalent to the data of an element $\alpha\in Z_\Gs$ (or equivalently of a
    rhombic immersion) and of an element $k\in\ZZ^\Es$.
 \item Let us recall, see for example~\cite{Kenyon:crit,KeSchlenk}, that the terminology \emph{isoradial} stems from the fact that in a rhombic immersion of $\GR$, the boundary vertices of each primal/dual face corresponding to a dual/primal vertex $f/v$
 are mapped to a unit circle whose center is the immersed vertex $f/v$.
 \item The notion of flat isoradial immersion can be formulated very naturally in terms of the graph of train-tracks~$\Gs^\T$: the angles~$\alpha$ are associated to the train-tracks, the integers~$k$ are associated to the vertices of~$\Gs^\T$, and they need to verify one condition for each inner vertex $v$ and inner face $f$ of~$\Gs$, \emph{i.e.}, for each bounded face of~$\Gs^\T$:
\begin{align}\label{equ:flat_2}
\sum_{e\sim v} k_e=\textstyle{1-\frac{1}{2\pi}s_v}\quad\text{and}\quad -\displaystyle{\sum_{e^* \sim f} k_e} =\textstyle{1-\frac{1}{2\pi}s_f},
\end{align}
where $s_v=\sum_{e\sim v}\theta_e$ and~$s_f=\sum_{e^*\sim f}(\pi-\theta_e)$.
 \item In the absence of unbounded faces, the notion of flat isoradial immersion
 is self-dual up to a sign change of $k$: by exchanging
   simultaneously the roles on the one hand of vertices and faces of $\Gs$, and
   on the other hand of primal and dual edges in~\eqref{equ:flat_2}, one sees that
   $\alpha$ and $k$ define an isoradial immersion of $\Gs$ if and only if $\alpha$ and $-k$
   define an isoradial immersion of the dual $\Gs^*$, see also Section~\ref{sub:rem}.
    In the general case, this
   notion remains locally self-dual away from the unbounded faces.
\end{enumerate}
\end{rem}

In the specific case where $k\equiv 0$, \emph{i.e.}, when lifted rhombus angles
$\tilde{\theta}_e\in[0,2\pi)$, and when the latter are moreover restricted to
being in $(0,\pi)$, a flat isoradial immersion is an \emph{isoradial embedding} as introduced in~\cite{Kenyon:crit,KeSchlenk}. 
As mentioned earlier, Kenyon and Schlenker~\cite{KeSchlenk} prove that a planar embedded graph~$\Gs$ admits an isoradial embedding if and only if the train-tracks make neither closed loops nor self-intersections and two train-tracks never intersect more than once. Furthermore, they identify the space of all isoradial embeddings of such a graph~$\Gs$ as a subspace of~$Z_\Gs$. The aim of this section and the next is to prove 
a theorem of the same flavor in the more general setting of isoradial immersions. 

\subsection{Statement of the main result}
\label{sub:statement_main}

The main question answered in this section is that of characterizing the planar embedded graphs admitting a flat isoradial immersion, and of understanding the space of such immersions. Stating this result is the subject of the present section; the proof is provided in Section~\ref{sub:proof_statement_main}.

We need one preliminary remark. Consider a planar embedded graph~$\Gs$, together with angles~$\alpha\in Z_\Gs$ and integers~$k\in\ZZ^\Es$ such that the corresponding isoradial immersion is flat. Moving along the entirety of a fixed train-track~$t\in\T$, we encounter vertices of~$\Gs^\T$, \emph{i.e.}, edges of~$\Gs$: alternatively add~$+1$ and~$-1$ to the corresponding integers~$k_e$. Note that vertices corresponding to self-intersections of~$t$ receive a contribution of~$+1-1=0$;
\new{this can be checked by recalling that train-tracks, also known as \emph{zig-zag paths}, alternately turn 
right and left when crossing edges}.
This defines a new set of integers~$k'=(k'_e)$, that are said to be obtained from~$k$ by a \emph{shift along~$t$}.
One easily checks that the isoradial immersion given by the same~$\alpha\in Z_\Gs$ and this new set of integers~$k'\in\ZZ^\Es$ is still flat: indeed, for any fixed face of~$\Gs^\T$, the vertices of~$t$ that belong to this face (if any) can be grouped in pairs of consecutive vertices of~$t$, whose added contributions vanish. (Vertices corresponding to self-intersections will appear twice in this count, with opposite signs).
\new{Note that these shift operations are commutative: shifting a set of integers along~$t_1$
and then along~$t_2$ gives the same result as shifting it first along~$t_2$ and then along~$t_1$.}

This motivates the following definition.

\begin{defi}
\label{def:equiv}
Given~$\alpha\in Z_\Gs$, two sets of integers~$k,k'\in\ZZ^\Es$ satisfying the flatness condition~\eqref{equ:flat_2} are said to be \emph{equivalent} if~$k'$ can be obtained from~$k$ via \new{a potentially infinite set of} shifts along train-tracks, \new{with each train-track supporting a finite number of shifts}.
\end{defi}

We now have all the ingredients to state our main theorem.

\begin{thm}\label{thm:main}
An arbitrary planar, embedded graph~$\Gs$ admits a flat isoradial immersion if and only if its train-tracks do not form closed loops. When this is the case, for every~$\alpha\in Z_\Gs$, there exists~$k\in\ZZ^\Es$, unique up to equivalence, such that~$\alpha$ and~$k$ define a flat isoradial immersion of~$\Gs$.
\end{thm}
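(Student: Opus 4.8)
The plan is to read the flatness conditions~\eqref{equ:flat_2} as an integer linear system for the unknown $k\in\ZZ^\Es$, with one equation per bounded face of $\Gs^\T$ (i.e.\ per inner vertex and inner face of $\Gs$). First I would record that the right-hand sides are integers: around any vertex (resp.\ face) of a rhombic immersion the rhombus angles $\theta_e$ (resp.\ $\pi-\theta_e$) sum to a multiple of $2\pi$, so $s_v,s_f\in 2\pi\ZZ$. The crucial reformulation is to view the left-hand side of the equation attached to a face $\phi$ as a \emph{signed} sum over the corners of $\partial\phi$: transported through the isomorphism $\varphi$ of Lemma~\ref{lem:phiiso}, this is exactly the standard $\ZZ$-bilinear pairing of $k$, seen as a $1$-cochain on the auxiliary graph $\Gs'$ (whose edges are the elements of $\Vs^\T$), with the oriented $1$-cycle $\varphi(\partial\phi)$. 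The sign discrepancy in~\eqref{equ:flat_2} between vertex- and face-equations is precisely the opposite induced orientations of these boundary cycles, so after this bookkeeping the whole system becomes $\ZZ$-linear. Throughout I would reduce to the finite connected case by the exhaustion argument already used in the proof of Lemma~\ref{lem:phiiso}.

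For the ``only if'' direction I would argue by a discrete Gauss--Bonnet identity. If some train-track is a closed loop, it bounds a disc $D$; summing the equations~\eqref{equ:flat_2} over all faces of $\Gs^\T$ contained in $D$, every vertex of $\Gs^\T$ strictly inside contributes $k_e+k_e-k_e-k_e=0$, and every vertex on the loop contributes $+k_e-k_e=0$, so the left-hand side vanishes identically. The $\alpha$-dependent terms on the right-hand side cancel through the identity $\sum s_v+\sum s_f=2\pi I+\pi B$ (with $I$ interior vertices and $B$ the length of the loop), leaving only the Euler characteristic $\chi(D)=1$ of the enclosed disc. Thus the system forces $0=1$ regardless of $\alpha$, and no flat isoradial immersion can exist.

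For the converse and the existence statement, assume no train-track forms a loop. Then $\varphi$ is an isomorphism (Lemma~\ref{lem:phiiso}), and its oriented refinement sends the face-boundary basis of the cycle lattice of $\Gs^\T$ to a $\ZZ$-basis $\{\varphi(\partial\phi)\}$ of the cycle lattice $Z_1$ of $\Gs'$. Since $Z_1$ is a direct summand of the free group of $1$-cochains, \emph{any} prescription of integer values on a basis of cycles extends to some cochain $k$, which yields the desired $k\in\ZZ^\Es$. To make this explicit and to describe the solution set, I would instead use the triangular basis furnished by Lemma~\ref{lem:corner}: its closed curves $c_e$ map to the fundamental cycles $\varepsilon'_e$ of the spanning tree of $\Gs'$, and under the oriented pairing $\langle k,\varepsilon'_e\rangle$ equals $\pm k_e$ plus a combination of the $k_{e'}$ with $e'\in M$. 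One is then free to choose $k$ arbitrarily on the $|\T|-1$ marked vertices $M$ and solve for the remaining $k_e$ one at a time, which both produces a solution and parametrizes all of them.

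It remains to identify the space of solutions with the equivalence classes of Definition~\ref{def:equiv}. The homogeneous system consists of the cochains $k$ pairing to zero with every cycle of $\Gs'$, i.e.\ the coboundaries $\{\delta g:\ g\colon\T\to\ZZ\}$. I would then check that the shift along a train-track $t$ is precisely $\delta(\mathbf 1_{t})$: the edges of $\Gs'$ met along $t$ carry alternating orientations---because train-tracks zig-zag, turning alternately left and right---so $\delta(\mathbf 1_t)$ assigns $\pm1$ alternately along $t$, which is exactly the shift. Hence the kernel is the $\ZZ$-span of the shifts, giving uniqueness up to equivalence (in the infinite case one allows $g$ of infinite support, each edge being adjacent to finitely many nonzero values, matching the finiteness condition in Definition~\ref{def:equiv}). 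The main obstacle, to my mind, is setting up the signed/oriented reformulation rigorously---matching the orientation conventions to the signs in~\eqref{equ:flat_2} and upgrading Lemma~\ref{lem:phiiso} to an orientation-compatible statement over $\ZZ$---after which existence, its explicit form via Lemma~\ref{lem:corner}, and the coboundary description of the kernel all follow cleanly.
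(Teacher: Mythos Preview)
Your existence and uniqueness argument, once the oriented\slash integer bookkeeping is in place, is essentially the paper's own: both rest on Lemma~\ref{lem:corner} (the corner basis coming from a spanning tree of~$\Gs'$) together with the fact that an appropriate $\ZZ$-combination of flatness equations attached to a closed curve involves only the corner variables. In the paper this last fact is Lemma~\ref{lem:flat}; in your language it is the claim that the combined left-hand side is the pairing of $k$ with a $1$-cycle of $\Gs'$. You are right to flag the orientation setup as the crux---and one should be a bit careful: the \emph{single} face equation at $\phi$ is not literally $\langle k,\varphi(\partial\phi)\rangle$ for any fixed orientation of~$\Gs'$ (the two vertex-type faces through a given $e$ would traverse the $\Gs'$-edge in opposite senses while both carry the coefficient $+k_e$). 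What is true, and what Lemma~\ref{lem:flat} proves, is that after summing with the winding multiplicities $n_\fs$ all non-corner contributions cancel and each corner appears with coefficient $\pm 1$; this is enough for the triangular solve and is morally your pairing statement. Your coboundary description of the kernel is then a clean repackaging of the paper's direct argument (shift along the tracks $\tau(e)$ to match on $M$, then use the basis $(c_e)$ with Lemma~\ref{lem:flat} to force equality everywhere); note that the identification ``shift along $t=\delta(\mathbf 1_t)$'' again requires the same orientation convention on~$\Gs'$.

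Your ``only if'' direction is genuinely different from the paper's. The paper does \emph{not} argue by a global sum; it first develops the invariance of flat immersions under the three Reidemeister-type local moves (Proposition~\ref{prop:Reidemeister}) and then reduces any closed-loop configuration to a simple loop crossed once by another track, from which two adjacent flatness equations give an immediate contradiction. Your Gauss--Bonnet sum is more direct and avoids that preparatory proposition, but as written it tacitly assumes the closed-loop train-track is \emph{simple} (``bounds a disc $D$''). If the loop self-intersects, an innermost simple sub-arc closes at a self-intersection point, and there your boundary cancellation $+k_e-k_e=0$ fails: that point becomes a genuine corner of the bounding curve and contributes $\pm k_e$. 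The clean fix is to sum with winding-number multiplicities over the $2$-chain bounded by the full closed track~$t$ (exactly as in Lemma~\ref{lem:flat}); since a complete train-track has no corners, every $k_e$ drops out and one is left with a constant identity which is then seen to be inconsistent. Either route works, but your sketch does not yet cover the non-simple case.
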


We mention one immediate reformulation of this theorem. Let us say that two flat isoradial immersions of~$\Gs$ are \emph{equivalent} if given by the same~$\alpha$ and equivalent~$k$'s. Then, the space of equivalence classes of flat isoradial immersions of~$\Gs$ is given by~$Z_\Gs$ if its train-tracks do not form closed loops, and is empty otherwise.

Proving Theorem~\ref{thm:main} consists in solving the linear
system~\eqref{equ:flat}. As noted in the introduction, the same system appears
in a surprisingly different setting (equations for R-charges for supersymmetric
fields). In~\cite{Gulotta},
the author solves this system for a class of $\ZZ^2$-periodic bipartite graphs,
containing periodic minimal graphs.
Some elements in the proof like, equivalence classes of solutions,
evolution of the solution under elementary moves,
sketched there without detailed computations,
have the same flavor as the arguments we use to tackle
the problem in greater generality.

\subsection{Proof of Theorem~\ref{thm:main}}
\label{sub:proof_statement_main}

In order to prove Theorem~\ref{thm:main}, we need two preliminary results. 
The first, Lemma~\ref{lem:flat},
consists in translating the equations~\eqref{equ:flat} defining flatness, each
of which is an equation on a face cycle of $\Gs^\T$, into equations on oriented
closed curves of $\Gs^\T$, as defined in Section~\ref{sub:cycles}. The main
content is that, for each closed curve, the flatness equations
combine
into an equation involving corner vertices only. The second, 
Proposition~\ref{prop:Reidemeister}, computes the evolution of a solution $k$ to
the flatness equations~\eqref{equ:flat_2} under elementary local moves. Although of independent interest, it is used to prove that a graph $\Gs$ having a train-track that forms a closed loop does not have an isoradial immersion. 
The actual proof of Theorem~\ref{thm:main} comes after these two preliminary results.

\bigskip

\textbf{Flatness equations on closed curves.} Recall the definition of closed curves of the graph $\Gs^\T$ given at the beginning of Section~\ref{sub:cycles}, and that of corners.
Whereas a closed curve was interpreted there simply as a subgraph because of
its description as an $\mathbb{F}_2$-chain, we see it here
as an oriented curve, bounding a finite collection of oriented faces, possibly
with multiplicities, \emph{i.e.}, as a $\ZZ$-chain.
There are exactly two possible orientations for a closed curve. We choose one of
them by orienting arbitrarily one of the edges, and propagating this orientation
along the other edges by the drawing procedure presented in
Section~\ref{sub:cycles}.

Following the standard notation coming from algebraic topology, we write this as~$c=\partial(\sum_{\fs}n_\fs\fs)$ with~$n_\fs\in\ZZ$ vanishing for all but finitely many faces. An example is given in Figure~\ref{fig:oriented_closed_curve}.

\begin{figure}[ht]
  \centering
  \def\svgwidth{4cm}
  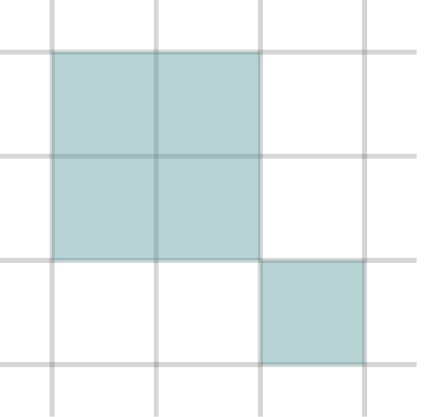
  \caption{An oriented closed curve $c$ and labelled faces such that
  $c=\partial(\fs_1+\fs_2+\fs_3+\fs_4-\fs_5)$. Vertices $e_1$, $e_2$, $e_3$, $e_4$ of the
four types are also indicated.}
  \label{fig:oriented_closed_curve}
\end{figure}

The following lemma shows that the flatness condition allows to uniquely determine
the value of $k$ at a corner of a closed curve as soon as we know its
value at all the other corners.

\begin{lem}
\label{lem:flat}
Fix angles~$\alpha\in Z_\Gs$.
Let~$c$ be any closed curve in~$\Gs^\T$, and $e_0$ a corner of $c$.
Then, the flatness condition inside
all faces appearing with non-zero multiplicity
in~$c=\partial(\sum_{\fs}n_\fs\fs)$ allows to express~$k_{e_0}$ as an affine
function of the values of $k$ at the other corners,
which does not depend on the integers associated to vertices that are not
corners of~$c$.

In particular, if the value of
$k$ at all corners except $e_0$ are fixed
arbitrarily, then the value at $e_0$ is determined uniquely.

\end{lem}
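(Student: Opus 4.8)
The plan is to combine the flatness equations attached to the faces enclosed by $c$ and to check that, in this combination, every contribution except those of the corners of $c$ cancels out. Recall that the bounded faces of $\Gs^\T$ correspond to inner vertices and inner faces of $\Gs$, and that, since $\Gs^\T=(\GR)^*$ with $\GR$ bipartite, these faces are naturally $2$-colored: two faces of $\Gs^\T$ sharing an edge are of opposite type, one a vertex $v$ and one a face $f$ of $\Gs$. Using this, I would rewrite the two families of conditions in~\eqref{equ:flat_2} uniformly as
\[
\sigma_\fs\sum_{e\in\partial\fs}k_e=R_\fs\,,
\]
one equation per bounded face $\fs$ of $\Gs^\T$, where $\sigma_\fs=+1$ if $\fs$ is a vertex of $\Gs$ and $\sigma_\fs=-1$ if it is a face, and where the right-hand side $R_\fs$ depends only on $\alpha$ (through $s_v$ or $s_f$) and not on $k$.

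Next I would view $c$ as the $\ZZ$-chain $c=\partial(\sum_\fs n_\fs\fs)$ and form the corresponding integer linear combination $\sum_\fs n_\fs\bigl(\sigma_\fs\sum_{e\in\partial\fs}k_e\bigr)=\sum_\fs n_\fs R_\fs$ of the flatness equations. The key computation is the resulting coefficient of a fixed $k_e$, namely $\sum_{\fs\ni e}\sigma_\fs n_\fs$. Around $e$ the graph $\Gs^\T$ is $4$-valent, and the four incident faces appear cyclically as $v,f,v',f'$, alternating between the two types; hence this coefficient equals $(n_v-n_f)+(n_{v'}-n_{f'})$. I would then read each parenthesis as the signed multiplicity jump across one of the two opposite edges at $e$ that together form a single train-track through $e$, each such jump being exactly the signed coefficient with which $c$ traverses the corresponding edge.

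The heart of the argument is a purely local case analysis at $e$, using that $c$ traverses an edge of $\Gs^\T$ precisely when the two faces it separates carry different multiplicities, and that along a single closed curve one goes straight at every self-intersection. If $e$ is not a corner, then either $c$ avoids $e$, or it runs straight through $e$: in each straight case the two jumps across the chosen train-track either both vanish or are equal and opposite (the latter because the two crossings are of one consistently oriented strand), so the coefficient is $0$. This includes the degree-$4$ situation, where the only subtlety is the orientation bookkeeping that produces the cancellation. If $e$ is a corner, then $c$ switches train-tracks at $e$, hence runs along exactly one of the two edges of the chosen train-track through $e$: the corresponding jump is $\pm1$ while the other vanishes, giving coefficient $\pm1$. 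Consequently the combined equation reads $\sum_{e}(\pm1)k_e=\sum_\fs n_\fs R_\fs$, the sum ranging only over the corners of $c$, with integer coefficients $\pm1$ and a right-hand side depending only on $\alpha$.

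Since $k_{e_0}$ occurs with coefficient $\pm1$, this expresses $k_{e_0}$ as an affine function, with integer coefficients, of the values of $k$ at the remaining corners, manifestly independent of the vertices that are not corners of $c$; this is exactly the assertion, and it yields the uniqueness of $k_{e_0}$ once the other corner values are fixed. The step I expect to be the main obstacle is the careful sign and orientation analysis in the degree-$4$ (straight--straight) case, which is where the claimed cancellation is least automatic.
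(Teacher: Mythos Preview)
Your proof is correct and follows essentially the same route as the paper's: form the $n_\fs$-weighted combination of flatness equations and do a local case analysis at each vertex of $\Gs^\T$ to show that non-corner contributions vanish. The only difference is cosmetic---you work directly with the $k$-form~\eqref{equ:flat_2} and the signs~$\sigma_\fs$, while the paper uses the $\tilde\theta$-form~\eqref{equ:flat} and exploits $\tilde\theta_{e^*}=\pi-\tilde\theta_e$; your version has the minor advantage of reading off the corner coefficient $\pm1$ immediately.
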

\begin{proof}
  Consider the flatness equations~(\ref{equ:flat}) for all faces, and add them up with their respective multiplicities~$n_\fs$. We now show that for all vertices of~$\Gs^\T$ that are not corners of~$c$, the corresponding variable~$\tilde{\theta}_e$ does not appear in the resulting equation. Indeed, there are four types of such vertices, labelled~$e_1,e_2,e_3,e_4$ in Figure~\ref{fig:oriented_closed_curve}: 
\begin{enumerate}
\item{vertices disjoint from the faces bounded by~$c$;}
\item{vertices of degree~$2$ of~$c$, where~$c$ stays on the same train-track;}
\item{vertices of degree~$4$ of~$c$, where~$c$ intersects itself transversally;}
\item{vertices not on~$c$ but inside a collection of faces bounded by~$c$.}
\end{enumerate}
In the first case, the variable~$\tilde{\theta}_e$ does not appear in the equation, so there is nothing to prove. In the second one, using the notation of Figure~\ref{fig:contrib_faces_around_vertex} (left), the four corresponding adjacent faces~$\fs_1,\fs_2,\fs_3,\fs_4$ satisfy~$n_{\fs_1}=n_{\fs_2}=n_{\fs_3}-1=n_{\fs_4}-1$ and contribute~$(n_{\fs_1}+n_{\fs_3})\tilde{\theta}_e+(n_{\fs_2}+n_{\fs_4})\tilde{\theta}_{e^*}=(n_{\fs_1}+n_{\fs_3})\pi$, so~$\tilde{\theta}_e$ indeed does not appear. In the third case, the four corresponding adjacent faces~$\fs_1,\fs_2,\fs_3,\fs_4$ satisfy~$n_{\fs_1}=n_{\fs_3}=n_{\fs_2}-1=n_{\fs_4}+1$ and contribute~$(n_{\fs_1}+n_{\fs_3})\tilde{\theta}_e+(n_{\fs_2}+n_{\fs_4})\tilde{\theta}_{e^*}=(n_{\fs_1}+n_{\fs_3})\pi$, as illustrated in Figure~\ref{fig:contrib_faces_around_vertex} (right). Finally, in the last case, the four corresponding adjacent faces all have the same multiplicity~$n_\fs$ and contribute~$n_\fs(2\tilde{\theta}_e+2\tilde{\theta}_{e^*})=2\pi n_\fs$.

\begin{figure}[ht]
  \centering
  \def\svgwidth{10cm}
  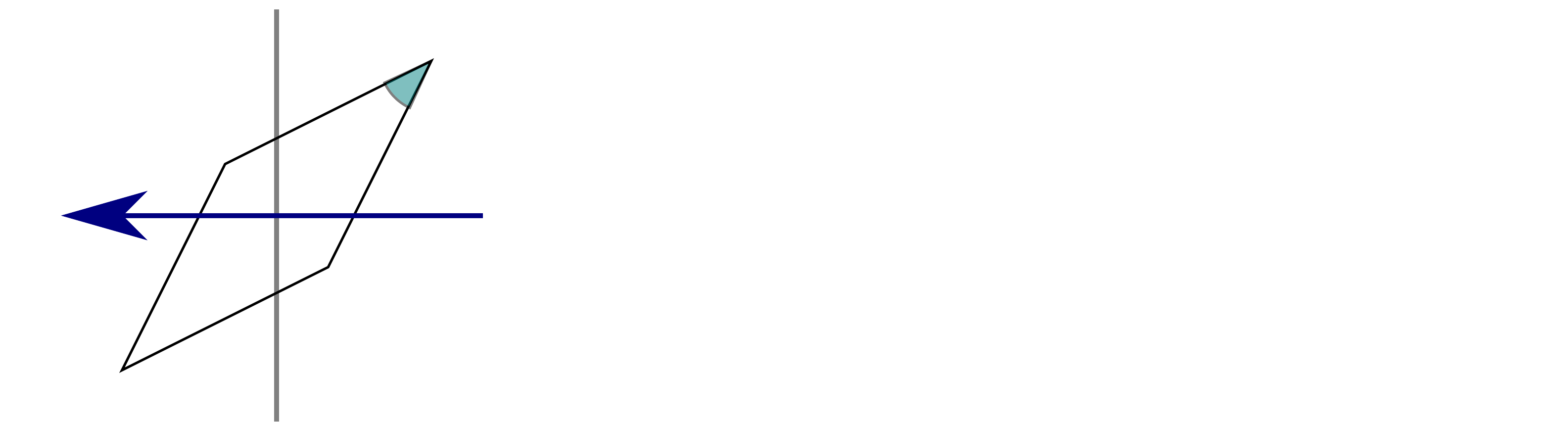
  \caption{Contributions of the faces of $\Gs^{\T}$ (\emph{i.e.}, rhombus vertices) around a vertex of a closed oriented curve $c$.}
  \label{fig:contrib_faces_around_vertex}
\end{figure}
As a consequence, the resulting equation is a linear equation in the integers~$k_e$ corresponding to the corners of~$c$. The lemma follows.
\end{proof}

\bigskip

\textbf{Elementary local moves}.
Used in the proof of Theorem~\ref{thm:main} and of independent interest, the
following proposition describes the evolution of the solution $k$ to flatness
under elementary local moves on the graph $\Gs^\T$, which are derived from
Reidemeister moves from knot theory, by forgetting information about crossings,
see Figure~\ref{fig:reidemeister}.

\begin{figure}[ht]
  \centering
  \def\svgwidth{12cm}
  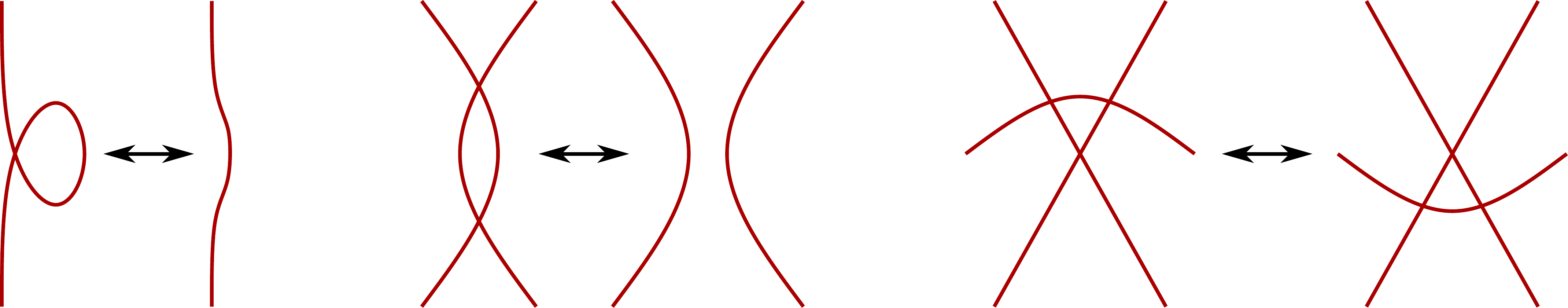
  \caption{The three elementary local moves on the graph $\Gs^\T$, analogous to
  the Reidemeister moves.}
  \label{fig:reidemeister}
\end{figure}

\begin{prop}
\label{prop:Reidemeister}
Let~$\Gs$ and~$\Gs'$ be two planar, embedded graphs whose train-tracks are related by a sequence of the three local transformations illustrated in Figure~\ref{fig:reidemeister}. Fix~$\alpha\in Z_\Gs$ and integers~$k\in\ZZ^\Es$ defining a flat isoradial immersion of~$\Gs$. Then, there exists~$k'\in\ZZ^{\Es'}$ such that~$\alpha\in Z_\Gs=Z_{\Gs'}$ and~$k'$ define a flat isoradial immersion of~$\Gs'$.  
\end{prop}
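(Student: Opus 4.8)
The plan is to reduce to a single move and then verify flatness by a finite local computation. Since $\Gs$ and $\Gs'$ are related by a finite sequence of the three moves of Figure~\ref{fig:reidemeister}, I would argue by induction on the length of that sequence: composing the single-move statement along the sequence yields the general case, each intermediate graph being again planar and carrying the same set of train-tracks. So it suffices to treat one move. The decisive preliminary observation is that each such move only alters the crossing pattern of the train-tracks inside some disc $D$, leaving the set $\T$ (and its orientations) untouched; hence $Z_\Gs=Z_{\Gs'}$ canonically and we keep the same angle map $\alpha$. Accordingly I would set $k'_e=k_e$ for every edge $e$ whose corresponding vertex of $\Gs^\T$ lies outside $D$ (these match bijectively with the edges of $\Gs'$ outside $D$), so that the problem collapses to defining $k'$ on the at most three edges lying inside $D$.

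Next I would isolate exactly which equations \eqref{equ:flat} are affected. Every inner vertex $v$ and inner face $f$ whose associated face of $\Gs^\T$ is disjoint from $D$, or whose bounding cycle is unchanged by the move, satisfies its flatness equation verbatim, since $\alpha$ and $k'$ agree there with $\alpha$ and $k$. Only the finitely many faces of $\Gs^\T$ meeting $D$ have their bounding cycle modified. For each of the three moves I would draw the local picture of $\Gs^\T$ before and after, list these affected faces (equivalently, the inner vertices and faces of $\Gs$ created, destroyed, or merged by the move), and write the corresponding modified flatness equations as a small linear system in the unknown integers $k'$ on the edges inside $D$. It is cleanest here to use the integer reformulation \eqref{equ:flat_2} rather than the real-angle version, so that solvability becomes a question of integer consistency rather than a computation with the $\tilde\theta$'s.

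I would then solve this local system for each move. For the triangle move the number of crossings is preserved, and the move merely reshuffles which faces the three crossings bound; one checks that transporting the three old values of $k$ along this reshuffling solves the three new equations, the total local angle being preserved. For the bigon move, which removes two crossings $x,y$ of a pair of train-tracks $t_1,t_2$, the two rhombus angles satisfy $\theta_{e_x}+\theta_{e_y}\equiv 0 \pmod{2\pi}$ because the two crossings occur in opposite directions; the faces on either side of the bigon merge, and the equation for the merged face is obtained from the old ones by deleting the two cancelling contributions, consistency being guaranteed precisely by the old flatness relations at the endpoints. The monogon move removes a single self-crossing and is handled in the same spirit, by checking the lone removed term against the equation of the face that swallows the monogon.

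The main obstacle, and the place requiring care, is the combinatorial bookkeeping of how the faces of $\Gs^\T$ — that is, the vertices and faces of $\Gs$ — split, merge, or disappear under each move, together with the verification that the angle defect introduced is an integer multiple of $2\pi$ that can be redistributed over the edges inside $D$ by an integer choice of $k'$. This is a discrete Gauss--Bonnet type accounting, and routing it through \eqref{equ:flat_2} (and, where convenient, through the closed-curve form of flatness in Lemma~\ref{lem:flat}) is what keeps it from degenerating into an unwieldy real-angle calculation; since the three verifications are local and finite, once the before/after pictures are fixed the integer solvability of the local system is immediate.
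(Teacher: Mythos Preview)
Your reduction to a single move and the idea of keeping~$k'=k$ outside the disc~$D$ are sound, but the claim that ``the problem collapses to defining~$k'$ on the at most three edges lying inside~$D$'' is wrong for the monogon move, and this breaks your argument. When the self-crossing of a train-track~$t$ is removed, the degree-$1$ vertex~$v$ inside the loop disappears together with its edge~$e$; flatness at~$v$ forced~$\tilde\theta_e=2\pi$, so after deletion the surviving vertex~$w'$ carries total angle~$0$ and the surrounding face~$f'$ carries total angle~$4\pi$. There is no edge inside~$D$ left to adjust, and modifying~$k$ at any single edge incident to~$w'$ fixes the defect at~$w'$ and at~\emph{one} adjacent face but pushes a new~$\pm 2\pi$ defect onto the \emph{other} adjacent face. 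The correction therefore propagates: the paper absorbs it by alternately adding~$\pm 1$ to~$k$ along a half of the train-track~$t$, all the way to the boundary. This is genuinely non-local, and your framework does not accommodate it.

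For the triangle move your sketch is also too loose: the three new edges are dual to the three old ones (the central vertex becomes a central face), so the correct assignment is~$k'_{e'_i}=-k_{e_i}$, reflecting~$\tilde\theta_{e^*}=\pi-\tilde\theta_e$, rather than a mere ``transport'' of values. Only the bigon move works exactly as you describe, with~$k'$ the plain restriction of~$k$ and the two removed angles cancelling via~$\tilde\theta_{e_1}+\tilde\theta_{e_2}=2\pi$.
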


\begin{figure}[ht]
  \centering
  \def\svgwidth{\textwidth}
  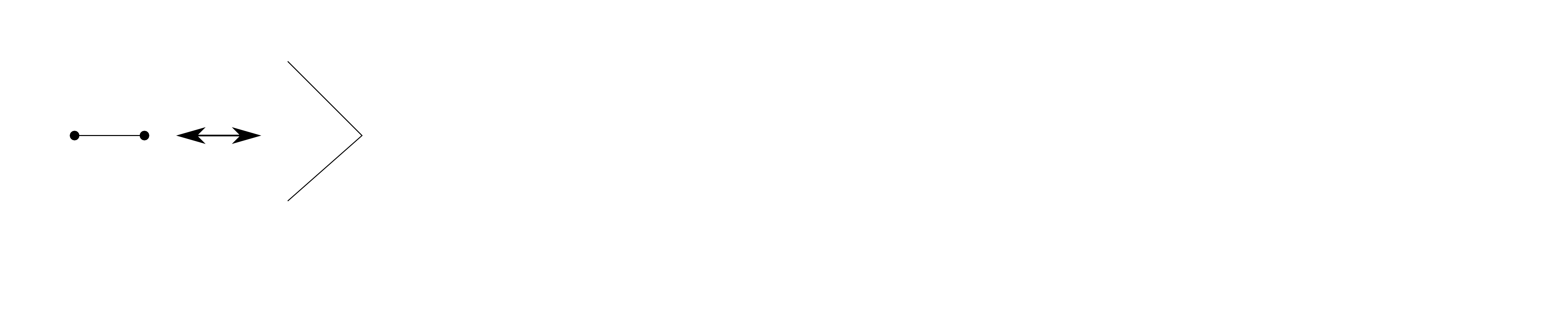
  \caption{Differing neighborhood in $\Gs$ and $\Gs'$ when passing from one to
    the other by the three local moves on train-tracks from
    Figure~\ref{fig:reidemeister}, illustrating the proof of
  Proposition~\ref{prop:Reidemeister}.}
  \label{fig:local_move_G}
\end{figure}

\begin{proof}
Let~$\Gs$ and~$\Gs'$ be two planar graphs with train-tracks related by the first
elementary move. The notion of flat isoradial immersion being self-dual, it can
be assumed that the self-intersection in~$\Gs^\T$ corresponds to an
edge~$e=wv\in\Es$, with~$v$ a degree~$1$ vertex inside the train-track loop (see
Figure~\ref{fig:local_move_G}, left). Let us denote by~$f\in\Fs$ the face
of~$\Gs$ adjacent to~$v$, by~$w'\in\Vs'$ the vertex of~$\Gs'$ corresponding
to~$w\in\Vs$, and by~$f'\in\Fs'$ the face of~$\Gs'$ corresponding to~$f$, as
illustrated in Figure~\ref{fig:local_move_G}. Let us first fix~$\alpha\in Z_\Gs$ and integers~$k\in\ZZ^\Es$ defining a flat isoradial immersion of~$\Gs$. Since the two train-tracks strands intersecting at~$e$ belong to the same train-track~$t$, we have~$\theta_e=0$ which by flatness at~$v$ implies the value~$k_e=1$. Undoing the train-track loop amounts to removing the corresponding edge~$e$, and rhombus, whose angles are~$\tilde{\theta}_e=2\pi$ and~$\tilde{\theta}_{e^*}=-\pi$. This creates an angle defect in~$\Gs'$: with the same angles~$\alpha\in Z_\Gs=Z_{\Gs'}$ and integers~$k$ restricted to~$\Es'=\Es\setminus\{e\}$, the angle at~$w'$ is equal to~$2\pi-2\pi=0$, and the angle inside the face~$f'$ is given by~$2\pi-((-\pi)+(-\pi))=4\pi$. To recover flatness, pick an arbitrary direction on~$t$, and starting at~$f'$, alternatively add~$+1$ and~$-1$ to the integers corresponding to the encountered vertices. One easily checks that the resulting~$k'\in\ZZ^{\Es'}$ defines a flat isoradial immersion of~$\Gs'$. (Note also that choosing the other direction of~$t$ gives equivalent integers.) Conversely, let us fix~$\alpha\in Z_{\Gs'}=Z_\Gs$ and integers~$k'\in\ZZ^{\Es'}$ defining a flat isoradial immersion of~$\Gs'$. Flatness at~$v$ requires the value~$k_e=1$ for the new rhombus of~$\Gs$, which creates an angle defect of~$\pm 2\pi$ in the adjacent vertex~$w$ and face~$f$ of~$\Gs$. However, flatness can be recovered by the same strategy as above.

Let us now consider two planar graphs~$\Gs$ and~$\Gs'$ related by the second
elementary move. The notion of flat isoradial immersion being self-dual, we can
assume that these graphs are locally given as described in the center of
Figure~\ref{fig:local_move_G}. We also assume the notation of this figure. Let us fix~$\alpha\in Z_\Gs$ and integers~$k\in\ZZ^\Es$ defining a flat isoradial immersion of~$\Gs$. We claim that the integers~$k'\in\ZZ^{\Es'}$ given by the restriction of~$k$ to~$\Es'=\Es\setminus\{e_1,e_2\}$ defines a flat isoradial immersion of~$\Gs'$. Indeed, flatness at~$v$ implies that the rhombus angles of~$e_1$ and~$e_2$ satisfy~$\tilde{\theta}_{e_1}+\tilde{\theta}_{e_2}=2\pi$; hence, the contribution of these two rhombi to the angle inside the face~$f_1$ is~$\tilde{\theta}_{e^*_1}+\tilde{\theta}_{e^*_2}=(\pi-\tilde{\theta}_{e_1})+(\pi-\tilde{\theta}_{e_2})=0$, and similarly for~$f_2$. Therefore, removing these two rhombi preserves flatness inside the adjacent faces. Furthermore, flatness at~$v_1$ and~$v_2$ amount to equalities~$2\pi=\omega_1+\tilde{\theta}_{e_1}$ and~$2\pi=\omega_2+\tilde{\theta}_{e_2}$ for some~$\omega_1,\omega_2\in\RR$ gathering the rest of the angle contributions. The equation~$\tilde{\theta}_{e_1}+\tilde{\theta}_{e_2}=2\pi$ implies~$\omega_1+\omega_2=2\pi$, which means flatness at the vertex~$v'$. Conversely, let us fix integers~$k'\in\ZZ^{\Es'}$ defining a flat isoradial immersion of~$\Gs'$. Flatness at~$v'$ means~$\omega_1+\omega_2=2\pi$ for some~$\omega_1$ (resp.~$\omega_2$) gathering angle contributions from the rhombi above (resp. below) the vertex~$v'$. There is a unique integer~$k_{e_1}$ (resp.~$k_{e_2}$) such that the corresponding rhombus angle satisfies~$\omega_1+\tilde{\theta}_{e_1}=2\pi$ (resp.~$\omega_2+\tilde{\theta}_{e_2}=2\pi$). The corresponding integers~$k\in\ZZ^\Es$ define an isoradial immersion of~$G$ which is flat at~$v_1$ and~$v_2$ by construction; it is also flat at~$v$ since the newly defined rhombus angles satisfy~$\tilde{\theta}_{e_1}+\tilde{\theta}_{e_2}=2\pi$. Finally, this equality implies that flatness is preserved at adjacent faces, as above.

\begin{figure}[ht]
  \centering
  \def\svgwidth{12cm}
  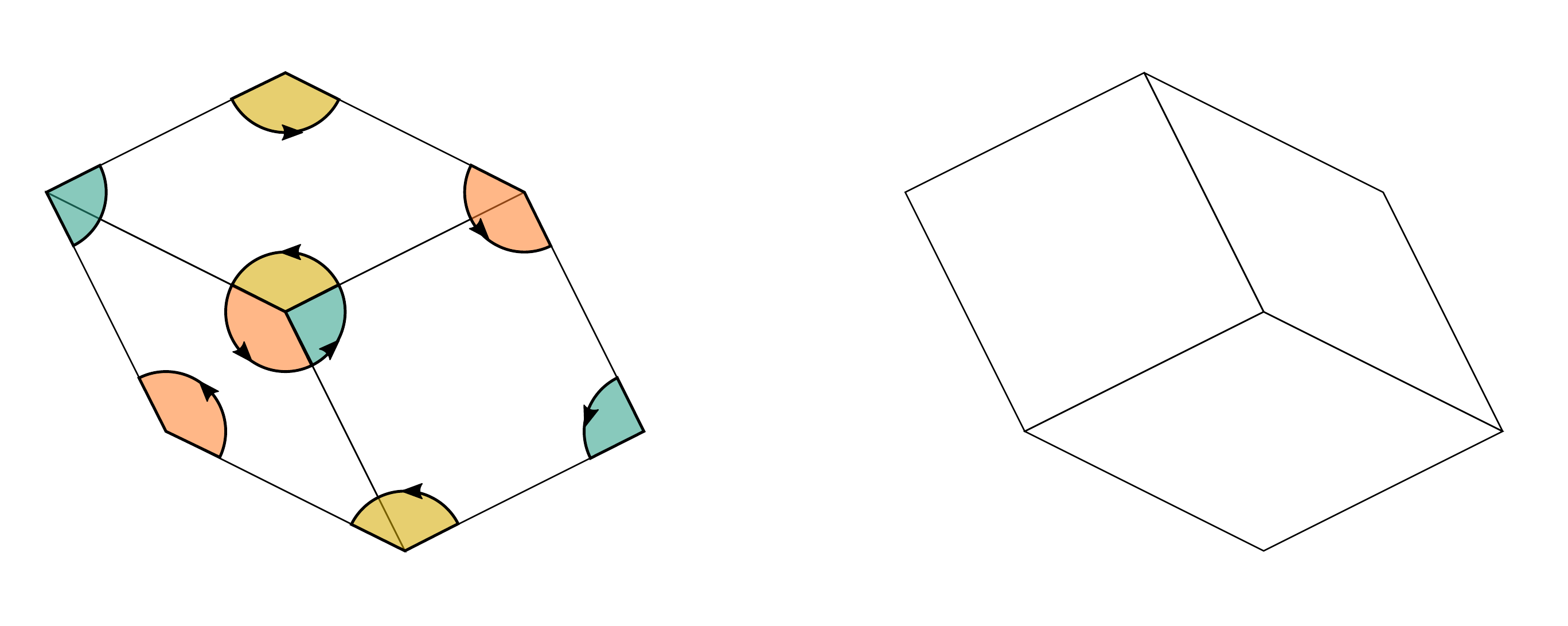
  \caption{Notation for the third elementary move of
  Proposition~\ref{prop:Reidemeister}.}
  \label{fig:angles_trois_losanges}
\end{figure}

Let us finally consider two planar graphs~$\Gs$ and~$\Gs'$ related by the third
elementary move. Without loss of generality, it can be assumed that these graphs
are locally given as illustrated in Figure~\ref{fig:local_move_G}, right.
Fix~$\alpha\in Z_\Gs=Z_{\Gs'}$ and integers~$k\in\ZZ^\Es$ defining a flat
isoradial immersion of~$\Gs$. Using the notation of Figure~\ref{fig:local_move_G}, let~$k'\in\ZZ^{\Es'}$ be given by~$k'_{e'_i}=-k_{e_i}$ for~$i=1,2,3$, and~$k'_e=k_e$ for the other edges, which are common to~$\Gs$ and~$\Gs'$. This choice is motivated by the fact that for any dual edges~$e$ and~$e^*$, the rhombus angles~$\tilde{\theta}_e=\theta_e+2k_e\pi$ and~$\tilde{\theta}_{e^*}=\theta_{e^*}+2k_{e*}\pi$ satisfy~$\tilde{\theta}_{e}+\tilde{\theta}_{e^*}=\theta_{e}+\theta_{e^*}=\pi$, which implies~$k_{e^*}=-k_e$. Therefore, we have the equality~$\tilde{\theta}_{(e_i')^*}=\tilde{\theta}_{e_i}$ for~$i=1,2,3$. Using this together with the equation~$\tilde{\theta}_{e_1}+\tilde{\theta}_{e_2}+\tilde{\theta}_{e_3}=2\pi$, one easily checks that~$k'$ defines a flat isoradial immersion of~$\Gs'$: the corresponding rhombus angle, expressed in terms of~$\tilde{\theta}_{e_1},\tilde{\theta}_{e_2},\tilde{\theta}_{e_3}$, are illustrated in Figure~\ref{fig:angles_trois_losanges}. The converse is proved analogously (or simply using the self-duality of flat isoradial immersions).
\end{proof}

\bigskip

\textbf{Proof of Theorem~\ref{thm:main}.}
Let us first consider a planar graph~$\Gs$ whose train-tracks make at least one closed loop, and assume by means of contradiction that~$\Gs$ admits a flat isoradial immersion. Using the three local moves of Figure~\ref{fig:reidemeister}, the corresponding graph of train-tracks can be transformed to contain a subgraph consisting of a simple closed loop crossed by one train-track, as illustrated in Figure~\ref{fig:reduction_loop} (left):
\new{fix one closed loop, remove superfluous intersections with this loop using the second and third local moves,
and remove self-intersections of the closed loop to make it simple using the first move.}
\new{Up to reflection along the vertical axis}, the corresponding graph~$\Gs'$ is locally given as in Figure~\ref{fig:reduction_loop} (right), whose notation we assume. 
Since~$\Gs$ admits a flat isoradial immersion, so does~$\Gs'$ by Proposition~\ref{prop:Reidemeister}. By flatness at the face~$f$, the rhombus angles satisfy~$(\pi-\tilde{\theta}_{e_1})+(\pi-\tilde{\theta}_{e_2})=2\pi$. But this implies the equality~$\tilde{\theta}_{e_1}+\tilde{\theta}_{e_2}=0$, which contradicts the flatness at~$v$.

\begin{figure}[ht]
    \centering
    \def\svgwidth{6cm}
\begingroup%
  \makeatletter%
  \providecommand\color[2][]{%
    \errmessage{(Inkscape) Color is used for the text in Inkscape, but the package 'color.sty' is not loaded}%
    \renewcommand\color[2][]{}%
  }%
  \providecommand\transparent[1]{%
    \errmessage{(Inkscape) Transparency is used (non-zero) for the text in Inkscape, but the package 'transparent.sty' is not loaded}%
    \renewcommand\transparent[1]{}%
  }%
  \providecommand\rotatebox[2]{#2}%
  \newcommand*\fsize{\dimexpr\f@size pt\relax}%
  \newcommand*\lineheight[1]{\fontsize{\fsize}{#1\fsize}\selectfont}%
  \ifx\svgwidth\undefined%
    \setlength{\unitlength}{669.00228878bp}%
    \ifx\svgscale\undefined%
      \relax%
    \else%
      \setlength{\unitlength}{\unitlength * \real{\svgscale}}%
    \fi%
  \else%
    \setlength{\unitlength}{\svgwidth}%
  \fi%
  \global\let\svgwidth\undefined%
  \global\let\svgscale\undefined%
  \makeatother%
  \begin{picture}(1,0.32041245)%
    \lineheight{1}%
    \setlength\tabcolsep{0pt}%
    \put(0,0){\includegraphics[width=\unitlength,page=1]{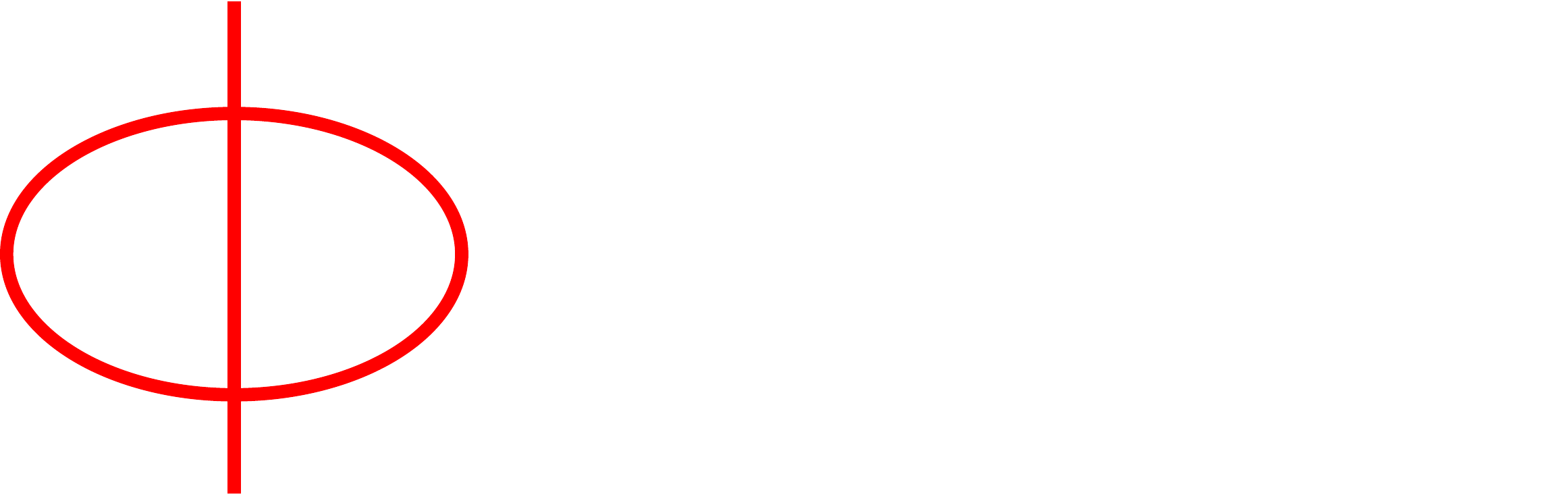}}%
    \put(0.92012385,0.14974488){\color[rgb]{0,0,0}\makebox(0,0)[lt]{\lineheight{1.25}\smash{\begin{tabular}[t]{l}$v$\end{tabular}}}}%
    \put(0,0){\includegraphics[width=\unitlength,page=2]{reduction_loop.pdf}}%
    \put(0.77258767,0.15355513){\color[rgb]{0,0,0}\makebox(0,0)[lt]{\lineheight{1.25}\smash{\begin{tabular}[t]{l}$f$\end{tabular}}}}%
    \put(0,0){\includegraphics[width=\unitlength,page=3]{reduction_loop.pdf}}%
    \put(0.74565915,0.28634246){\color[rgb]{0,0,0}\makebox(0,0)[lt]{\lineheight{1.25}\smash{\begin{tabular}[t]{l}$e_1$\end{tabular}}}}%
    \put(0.74792364,0.01057571){\color[rgb]{0,0,0}\makebox(0,0)[lt]{\lineheight{1.25}\smash{\begin{tabular}[t]{l}$e_2$\end{tabular}}}}%
  \end{picture}%
\endgroup%

    \caption{
    Proof of Theorem~\ref{thm:main}:  
    reduction in the case where a
    train-track of $\Gs$ makes a closed loop. Left: local train-track configuration. Right: corresponding neighborhood in the graph, \new{with~$v$ of degree~$2$ and the other vertex of degree at least~$2$}.}
    \label{fig:reduction_loop}
\end{figure}

Let us now assume that~$\Gs$ is a planar graph whose train-tracks do not form closed loops, and fix angles~$\alpha\in Z_\Gs$. We want to find~$k\in\ZZ^\Es$ defining a flat isoradial immersion of~$\Gs$. Note that using (the second move of) Proposition~\ref{prop:Reidemeister}, it can be assumed that~$\Gs^\T$ is connected.
Take a set of vertices~$M\subset\Vs^\T$ as in Lemma~\ref{lem:corner}.
\new{We now show that, for any choice of integers~$(k_e)_{e\in M}$, there exists a unique
completion~$k=(k_e)_{e\in \Vs^\T}$ defining a flat isoradial immersion of~$\Gs$.} 
By Lemma~\ref{lem:corner}, we have a basis~\new{$(c_e)_{e\in\Vs^\T\setminus M}$} of~$\mathcal{E}(\Gs^\T)$
made of closed curves 
such that~\new{each $e\in\Vs^\T\setminus M$ is a corner of~$c_e$} and all other corners \new{of~$c_e$} belong to~$M$.
By Lemma~\ref{lem:flat}, flatness inside the faces determines a unique value for~$k$ on the edges corresponding to these remaining vertices. To be more precise, the closed curve~\new{$c_e$} endowed with an arbitrary orientation can be written as~$c_e=\sum_\fs n_{e\fs}\partial\fs$ with~$n_{e\fs}\in\ZZ$, and the corresponding~$\ZZ$-linear combination of the flatness equations for the bounded faces~$\fs$ gives us this result. These oriented closed curves~$(c_e)$ form a basis of the (free abelian group of) cycles of~$\Gs^\T$. Since the boundary of any bounded face~$\fs$ is such a cycle, it can be expressed as~$\partial\fs=\sum_e m_{\fs e}c_e$ with~$m_{\fs e}\in\ZZ$. Therefore, the flatness equations for the~$c_e$'s, which we know are satisfied, imply the flatness equations for the faces, \emph{i.e.}, the flatness of the immersion.

Finally, consider a planar graph~$\Gs$ whose train-tracks do not form closed loops, and angles~$\alpha\in Z_\Gs$ together with~$k,k'\in\ZZ^\Es$ both defining flat isoradial immersions of~$\Gs$. Fix a set of vertices~$M\subset\Vs^\T$ as in Lemma~\ref{lem:corner}, and recall the equivalence relation of Definition~\ref{def:equiv}. Since we have an injective map~$\tau\colon M\to\T$ with~$e\in\tau(e)$ for all~$e\in M$, there exists~$k''\sim k$ such that~$k''$ and~$k'$ coincide on all edges corresponding to elements of~$M$. By Lemma~\ref{lem:corner}, any~$e\in\Vs^\T\setminus M$ is the corner of some closed curve~$c_{e}$ of~$\Gs^\T$ such that all corners of~$c_{e}$ but~$e$ belong to~$M$. By Lemma~\ref{lem:flat}, flatness implies that~$k''$ and~$k'$ coincide on this last corner~$e$ as well. Therefore, the family of integers~$k''$ and~$k'$ are equal, so~$k$ is equivalent to~$k'=k''$. $\hfill\square$

\subsection{Additional features}
\label{sub:rem}

This section contains additional features of isoradial immersions. First, restricting to the case where the graph $\Gs$ is finite, we translate results of Theorem~\ref{thm:main} into information on the linear system~\eqref{equ:flat_2} defining flatness; we also give an alternative explicit geometric construction of the solution. Then, we provide a geometric interpretation of folded rhombi of isoradial immersions using the infinite dihedral group. Last, we prove a discrete version of the Gauss--Bonnet formula. 

\bigskip

\textbf{Flatness condition revisited.} In this section and the next, we suppose that the graph $\Gs$ is finite and that $\Gs^\T$ does not contain closed loops. Recall the linear system~\eqref{equ:flat_2} defining flatness :
\begin{align*}
\sum_{e\sim v} k_e=\textstyle{1-\frac{1}{2\pi}s_v}\quad\text{and}\quad -\displaystyle{\sum_{e^* \sim f} k_e} =\textstyle{1-\frac{1}{2\pi}s_f},
\end{align*}
where $s_v=\sum_{e\sim v}\theta_e$ and~$s_f=\sum_{e^*\sim f}(\pi-\theta_e)$. Then, the matrix $\Ms$ corresponding to this linear system has rows indexed by faces of $\Gs^\T$, columns indexed by vertices of $\Gs^\T$, or equivalently by edges of $\Gs$; the coefficient $\Ms_{\fs,e}$ is non zero iff $e$ is on the boundary of the face $\fs$; when this is the case, it is equal to $+1$, resp. $-1$, if $\fs$ corresponds to an inner vertex, resp. an inner face, of $\Gs$.

In order to state the translation of Theorem~\ref{thm:main} to the linear system, we need one more notation. 
Given two integer families~$k,k'\in\ZZ^{\Vs^\T}$ related by a shift along a train-track~$t$ (recall Definition~\ref{def:equiv}), we let~$k^t\in\ZZ^{\Es}$ denote their difference (with an arbitrary sign). 
Then, rephrasing Theorem~\ref{thm:main} in this context yields:
\begin{enumerate}
 \item The rank of~$\Ms$ is equal to~$|\Fs^\T|$ (so the associated map is onto).
 \item The kernel of~$\Ms$ has dimension~$|\T|-1$, and is generated by~$\{ k^t:\, t\in\T\}$.
\end{enumerate}

The penultimate paragraph of the proof of Theorem~\ref{thm:main}, which uses
Lemma~\ref{lem:corner} and Lemma~\ref{lem:flat}, can be seen in this context as a
way to explicitly trigonalize the linear system defined by the matrix $\Ms$.

\bigskip

\textbf{Geometric construction of solutions}. We now propose an alternative geometric construction of a solution to the linear system~\eqref{equ:flat_2}. For every inner vertex $v$ of $\Gs$, let $\delta_v$ be the function defined on faces of $\Gs^\T$, taking value 1 at the vertex $v$, and 0 elsewhere; the function $\delta_f$ for an inner face $f$ of $\Gs$ is defined similarly. 
We explicitly construct functions $k^v$ and $k^f$ such that $\Ms k^v=\delta_v$ and $\Ms k^f=\delta_f$, thus proving surjectivity of the map associated to $\Ms$.
Without loss of generality, let us restrict to an inner vertex $v$ of $\Gs$ since it will be clear from the proof how to proceed for a face $f$. We use three building blocks.

For every vertex $e$ of $\Gs^\T$ consider the function
$\delta_e\in\CC^{\Vs^\T}$, taking value 1 at $e$ and 0 elsewhere. Then, if $e$
is not the end point of a loop edge of $\Gs^\T$, it is on the boundary of two
faces corresponding to primal vertices and two faces corresponding to dual ones.
In this case, $\Ms \delta_e$ is equal to 1 on the two vertices, $-1$ on the two
faces and 0 elsewhere, see Figure~\ref{fig:proof_alt_1} (first figure). If the
vertex $e$ is the end point of a loop edge, it is a self-intersection point of a
single train-track, and the vertex $e$ is on the boundary of two faces
corresponding to primal vertices and one corresponding to a dual one (or the
reverse). Then $\Ms \delta_e$ takes value 1 at the two primal vertices and $-2$ at the dual one (or the reverse), see Figure~\ref{fig:proof_alt_1} (second figure). 
\begin{figure}[ht]
\centering
\begin{overpic}[width=\linewidth]{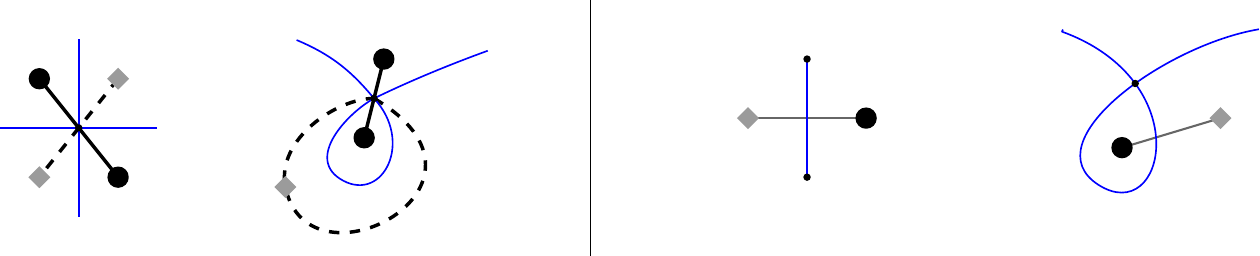}
    \put(4,15){\scriptsize $1$}
    \put(9,3){\scriptsize $1$}
    \put(10,15){\scriptsize $-1$}
    \put(2,3){\scriptsize $-1$}
    \put(8,11){\scriptsize $e$}
    \put(32,15){\scriptsize $1$}
    \put(28,7){\scriptsize $1$}
    \put(21,2){\scriptsize $-2$}
    \put(32,12){\scriptsize $e$}
    \put(64.5,12){\scriptsize $\es$}
    \put(70.5,11){\scriptsize $1$}
    \put(55.5,11){\scriptsize $-1$}
    \put(64,4){\scriptsize $e_1$}
    \put(64,17){\scriptsize $e_2$}
    \put(57,9){\scriptsize $v_\es$}
    \put(70,9){\scriptsize $f_\es$}
    
    \put(91,4){\scriptsize $\es$}
    \put(87,8){\scriptsize $1$}
    \put(98,11){\scriptsize $-1$}
\end{overpic}
\caption{Left: the image $\Ms\delta_e$ around the vertex $e$ of $\Gs^\T$ when
$e$ is not the end-point of a loop edge (first picture) and when it is (second
picture). Right: the image $\Ms k^\es$ when $\es$ is a simple edge of $\Gs^\T$ (third picture) or a loop edge (fourth picture).}
\label{fig:proof_alt_1}
\end{figure}

Consider an edge $\es$ of $\Gs^\T$
(where the notation $\es$ for an edge of $\Gs^\T$ should not be confused with
the notation $e$ for a vertex of $\Gs^\T$),
then $\es$ belongs to a unique train-track $t$ and bounds two faces of $\Gs^\T$ corresponding to a vertex $v_\es$ and a face $f_\es$. From the functions $(\delta_e)$, we construct a function $k^\es$ such that $\Ms k^\es$
takes value 1 at $v_\es$ and $-1$ at $f_\es$. Let $(e_1,e_2)$ denote the edge $\es$ oriented so that $v_\es$ is on the left. Consider the path in the train-track $t$ running from $e_1$ to the boundary without crossing $e_2$, and define $k^\es$ to be the alternate sum of the functions $(\delta_e)$ running over the encountered vertices along the path, starting with a $+1$; note that one could equivalently take a path from $e_2$ to the boundary of the graph not crossing $e_1$. Then, since two consecutive vertices share one face corresponding to a vertex and one to a face of $\Gs$ (this is because the graph $\Gs^\T$ has degree 4 vertices), we have that $\Ms k^{\es}$ is equal to 1 at $v_\es$, $-1$ at $f_\es$ and 0 at all other inner faces of $\Gs^\T$. Note that if $\es$ is a loop, all of the above makes sense with minor modifications.

Consider an inner vertex $v$ of $\Gs$. From the functions $(\delta_\es)$ we now construct a function $k^v$ such that $\Ms k^v=\delta_v$.
Consider a simple path in the quad-graph $\GR$ from $v$ to a boundary vertex, and the dual edges (which belong to $\Gs^{\T}$). Define $k^v$ to be the alternate sum of the functions $k^\es$ crossing this path, starting from a +1. Then, since two consecutive edges bound a common face of $\Gs^\T$ corresponding to a primal or a dual vertex of $\Gs$, we indeed have that $\Ms k^v=\delta_v$ except on the boundary vertex or face, which does not enter the linear system. A graphical representation of $k^v$ is given in Figure~\ref{fig:proof_alt_2} below.

As an immediate consequence, we recover that the rank of $\Ms$ is equal to $|\Fs^\T|$. 

\begin{figure}[ht]
\centering
\begin{overpic}[width=7cm]{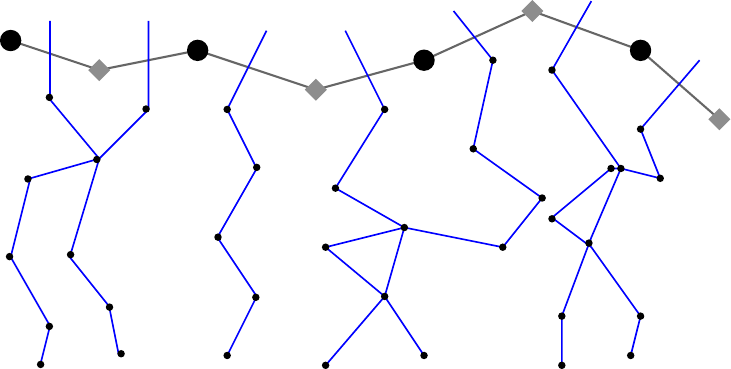}
    \put(-3,43){\scriptsize $v$}
    \put(100,32){\scriptsize boundary vertex of $\GR$}
\end{overpic}
\caption{Graphical representation of the solution $k^v$: the marked vertices of $\Gs^\T$ are those where the function $k^v$ is possibly non-zero.}
\label{fig:proof_alt_2}
\end{figure}

\begin{rem}
This concrete way of solving the linear system has a nice geometric interpretation, best explained when the graph $\Gs$ is isoradial, so let us assume that this is the case. Consider angles $\alpha\in Z_\Gs$, then a natural question is to understand how the solution $k$ varies as one of the angles $\alpha(\tr_j)$ moves on the other side of a neighboring angle $\alpha(\tr_i)$. This local transformation is generic in the sense that any pair $\alpha,\alpha'$ of angles in $Z_\Gs$ can be obtained from each other by a sequence of such local moves. This move amounts to exchanging the angles of the directed train-tracks $\tr_i,\tr_j$ crossing at a vertex $e$ in the graph $\Gs^\T$, and this vertex is incident to two faces corresponding to vertices of $\Gs$ and two faces corresponding to faces of $\Gs^*$. Since the angles $\alpha(\tr_i),\alpha(\tr_j)$ are adjacent, this amounts to increasing the value $\frac{1}{2\pi}s_v$ by $1$ at the two primal vertices and decreasing the value $\frac{1}{2\pi}s_f$ by $-1$ at the two dual faces, or the opposite. 
From the first part of the proof involving the function $\delta_e$, we deduce that this local move on the angles has the effect of increasing or decreasing $k$ by 1 at the vertex $e$.

The next relevant geometric question is: given that the quantity $\frac{1}{2\pi}s_v$ increases or decreases by 1 at a given inner vertex $v$ or face $f$, how does the solution $k$ vary? What we prove is that the solution gets modified by adding or subtracting 1 along a family of paths of $\Gs^\T$ to the boundary starting from vertices of a dual path essentially joining the vertex in question to the boundary.
\end{rem}

\bigskip

\textbf{Folded rhombi and the infinite dihedral group.} In our definition of isoradial immersions, we have adopted a combinatorial viewpoint. Recall, see Remark~\ref{rem:def_iso_imm}, that an isoradial immersion is equivalent to the data of an angle map~$\alpha\in Z_\Gs$ and of a set of integers~$k=(k_e)_e\in\ZZ^\Es$.
There is a more geometric viewpoint on these choices of lifts, that we now present. As already mentioned, a rhombus with angle~$\tilde\theta_e=\theta_e\in(0,\pi)$ should be thought of as embedded in the plane: let us denote this \emph{rhombus state} by~$1$. When the angle~$\tilde\theta_e$ increases and crosses the value~$\pi$, the rhombus opens more and more until it folds along the (primal) edge~$e$: let us denote this state by~$p$ (for primal). If the angle~$\tilde\theta_e$ continues to grow and crosses the value~$2\pi$, the rhombus shrinks until it folds again, but this time, along the dual edge~$e^*$: let us denote this new state by~$pd$ (for primal-dual). Continuing in this way, we see that each angle~$\tilde\theta_e>0$ determines a state of the form~$pdp\cdots$. In the same way, when the angle~$\tilde\theta_e\in(0,\pi)$ of an embedded rhombus decreases and becomes negative, the rhombus shrinks until it folds along the dual edge~$e^*$, a state denoted by~$d$. Decreasing~$\tilde\theta_e$ further leads to the rhombus folding again, leading to states of the form~$dpd\cdots$. Note that folding a rhombus twice along the same (primal or dual) edge does not change its state, a fact denoted by~$p^2=d^2=1$.

In a nutshell, the set of states for a given rhombus form the \emph{infinite dihedral group}~$D_\infty$, best understood and presented in the current situation as the free product of two copies of~$\ZZ/2\ZZ$:
\[
D_\infty=\ZZ/2\ZZ*\ZZ/2\ZZ=\left<p,d\,|\,p^2=d^2=1\right>\,.
\]  
The possible deformations of a given rhombus fit nicely in the Cayley graph of
this group, with respect to the generators~$\{p,d\}$, as illustrated in
Figure~\ref{fig:rhombus_states}.

\begin{figure}
  \centering
  \def\svgwidth{\textwidth}
  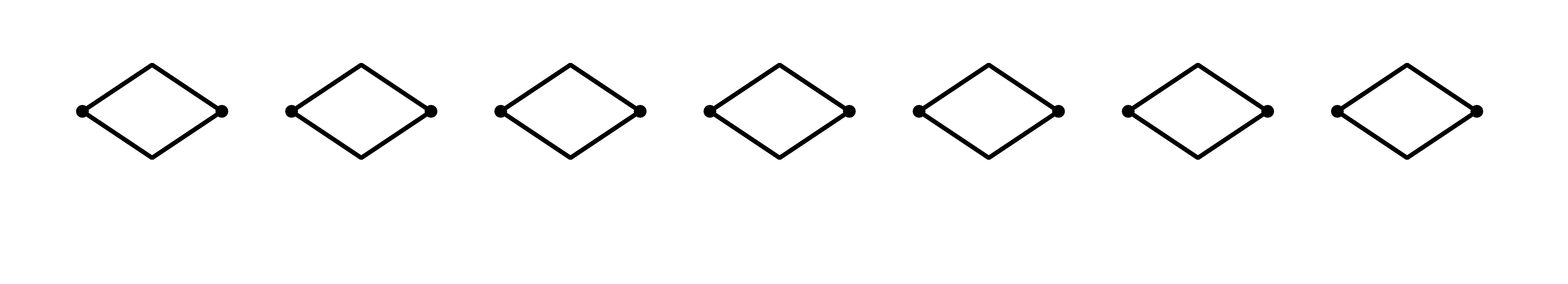
  \caption{The possible states of a fixed rhombus, as the vertices of the Cayley graph of $D_\infty=\langle p,d|p^2=d^2=1\rangle$, with the corresponding rhombus angles $\tilde{\theta}_e$ and $\tilde{\theta}_{e^*}=\pi-\tilde{\theta}_e$.}
  \label{fig:rhombus_states}
\end{figure}

There is more structure on the group~$D_\infty$, that sheds an interesting light on isoradial immersions, and on the proof of Theorem~\ref{thm:main}.

First, there is a natural \emph{orientation homomorphism}~$\epsilon\colon D_\infty\to\ZZ/2\ZZ$ mapping both generators~$p$ and~$d$ to the non-trivial element of~$\ZZ/2\ZZ$. A rhombus is \emph{positively oriented} if its state lies in the kernel of~$\epsilon$, and \emph{negatively oriented} otherwise. The group~$D_\infty^+=\operatorname{ker}(\epsilon)$ of positively oriented states is isomorphic to~$\ZZ$ (generated by~$pd$ or~$dp=(pd)^{-1}$), while~$D_\infty\setminus D_\infty^+\simeq pD_\infty^+$ is also in canonical bijection with~$\ZZ$. This leads to the integer~$k_e$ of the combinatorial viewpoint of Section~\ref{sub:def}.

Also, the group~$D_\infty$ is endowed with a natural involution determined by~$p\mapsto d$ and~$d\mapsto p$. It commutes with the orientation homomorphism, and therefore induces an involution on~$D_\infty^+$, which is nothing but~$k\mapsto -k$ via the isomorphism~$D_\infty^+\simeq\ZZ$. This involution corresponds geometrically to exchanging the primal and dual edges of the rhombi. In other words, it is the algebraic counterpart of the geometric duality on the graph~$\Gs$, exhibiting the fact that isoradial immersions are self-dual, \emph{i.e.}, coherent with duality, see Point 4. of Remark~\ref{rem:def_iso_imm}.

Therefore, instead of defining an isoradial immersion as coming from an angle map~$\alpha\in Z_\Gs$ and integers~$k\in\ZZ^\Es$, we could just as well have defined this object as coming from an~$\alpha\in Z_\Gs$ together with some states in~$(D_\infty)^\Es$ with orientations determined by~$\alpha$. As the reader will easily check, the whole of Section~\ref{sec:flat} can be rewritten in this way, using the duality involution on~$D_\infty$ instead of~$k\mapsto -k$ (see in particular the third part of the proof of Proposition~\ref{prop:Reidemeister}).

\bigskip

\textbf{Discrete Gauss--Bonnet formula.} 
Consider a planar graph $\Gs$ such that $\Gs^\T$ has no closed loops, and  
a simple, connected cycle $c$ of $\Gs^\T$. By adding the
left-hand side of the flatness equations~\eqref{equ:flat} for all faces inside~$c$, similarly to what we have done in the beginning of the proof of Lemma~\ref{lem:flat} for oriented closed curves, we obtain a discrete version of the Gauss--Bonnet formula. 

Recall the definition of corners given in Section~\ref{sub:cycles}. In order to state our result, we need the following notation.
Denote by $\Fs^\T_c$ the set of faces of $\Gs^\T$
bounded by~$c$. Recall that faces of $\Gs^\T$ correspond to inner vertices and faces of $\Gs$; denote by $\Vs_c$, resp. $\Vs_c^*$, the set of vertices, resp. faces of $\Gs$, whose corresponding faces are in $\Fs^\T_c$. Note that each corner vertex $e$ of $c$ is on the boundary of exactly one or three faces of $\Fs^\T_c$;
in the first case, we shall write~$\sgn(e)=+1$ and in the second case,~$\sgn(e)=-1$.
Finally, we shall write~$e\sim v$ (resp.~$e\sim f$) if among the (one or three) faces in $\Fs^\T_c$ bounded by the corner vertex~$e$, exactly one corresponds to a vertex (resp. a face) of~$\Gs$.

\begin{prop}[Discrete Gauss--Bonnet formula]\label{lem:lem_sum}
Fix angles $\alpha\in Z_\Gs$,
integers~$k=(k_e)_{e\in\Es}$,
and consider the corresponding isoradial immersion. Let~$c$ be a simple, connected cycle in~$\Gs^\T$.
Then, we have the equality 
\begin{equation}\label{equ:lem_sum}
\sum_{v\in \Vs_c}(2\pi-\tilde{s}_v)+\sum_{f\in \Vs_c^*}(2\pi-\tilde{s}_f)+
\sum_{e \text{ corner},\, e\sim v}\!\!\!\sgn(e)\,\tilde{\theta}_e+\sum_{e \text{ corner},\, e\sim f}\!\!\!\sgn(e)\,\tilde{\theta}_{e^*}=2\pi\,,
\end{equation}
where $\tilde{s}_v=\sum_{e\sim v}\tilde{\theta}_e$ and~$\tilde{s}_f=\sum_{e\sim f}\tilde{\theta}_{e^*}$.
\end{prop}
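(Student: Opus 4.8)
The statement is a discrete Gauss–Bonnet formula, and the author's own hint tells me exactly how to approach it: sum the left-hand sides of the flatness equations over all faces inside the simple cycle $c$, exactly as in the beginning of the proof of Lemma~\ref{lem:flat}, but now with all multiplicities equal to $+1$ (since $c$ is a simple cycle, $c = \partial(\sum_{\mathsf{f}} n_{\mathsf{f}} \mathsf{f})$ with $n_{\mathsf{f}} = 1$ for each face $\mathsf{f}$ bounded by $c$ and $0$ otherwise).

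So the key identity I want is: for a simple cycle $c$, I should add up the quantity $\sum_{e \sim v}\tilde\theta_e$ over all $v \in \Vs_c$ and $\sum_{e \sim f}\tilde\theta_{e^*}$ over all $f \in \Vs_c^*$. Let me think about what this sum equals.

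**The mechanism (which angles survive).**

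Each face $\mathsf{f}$ of $\Gs^\T$ inside $c$ carries a total angle $\tilde s_v = \sum_{e\sim v}\tilde\theta_e$ (if it's a primal-vertex face) or $\tilde s_f = \sum_{e\sim f}\tilde\theta_{e^*}$ (if it's a dual-face). Adding up these total angles over all of $\Fs^\T_c$, every vertex $e$ of $\Gs^\T$ contributes its $\tilde\theta_e$ or $\tilde\theta_{e^*}$ once for each adjacent face lying in $\Fs^\T_c$. The analysis in Lemma~\ref{lem:flat} tells me precisely how many times each type of vertex is counted, and with what combined effect. The crucial fact is that $\tilde\theta_e + \tilde\theta_{e^*} = \pi$: so whenever a vertex $e$ has all four of its adjacent faces inside $c$, its contribution is $2\tilde\theta_e + 2\tilde\theta_{e^*} = 2\pi$; and whenever two adjacent faces (one primal-type, one dual-type) are inside while the other two are outside, the contribution is $\tilde\theta_e + \tilde\theta_{e^*} = \pi$, with the individual angle $\tilde\theta_e$ disappearing. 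The only place an individual angle $\tilde\theta_e$ or $\tilde\theta_{e^*}$ survives uncancelled is at a \emph{corner} of $c$, where the local pattern of faces-inside-$c$ is asymmetric. Carefully accounting for the two corner cases — a corner bounding exactly one face of $\Fs^\T_c$ (giving $\sgn(e)=+1$) versus a corner bounding exactly three (giving $\sgn(e)=-1$) — produces exactly the corner terms $\sum_{e\text{ corner},\,e\sim v}\sgn(e)\,\tilde\theta_e$ and $\sum_{e\text{ corner},\,e\sim f}\sgn(e)\,\tilde\theta_{e^*}$ in the formula.

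**Assembling the formula.**

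With this bookkeeping done, I would write $\sum_{\mathsf{f}\in\Fs^\T_c}(\text{total angle of }\mathsf{f})$ in two ways. On one hand it equals $\sum_{v\in\Vs_c}\tilde s_v + \sum_{f\in\Vs_c^*}\tilde s_f$ by definition. On the other hand, grouping by vertices $e$ of $\Gs^\T$, it equals (number of interior vertices each contributing $2\pi$) plus the corner contributions. The flatness-style argument shows each interior face contributes its cone-angle term, and flatness of the immersion gives that each inner-vertex/inner-face total angle is $2\pi$ — but here I do \emph{not} assume flatness, so instead I keep $\tilde s_v$ and $\tilde s_f$ general and express everything through the combinatorial count. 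The cleanest route is: count the boundary of the region enclosed by $c$ via Euler's formula, i.e.\ the number of faces of $\Fs^\T_c$ equals the number of $2\pi$-contributions plus a correction localized at the corners, where the topological input is that $c$ bounds a disc. Rearranging, moving the full-turn terms to one side and recognizing $2\pi - \tilde s_v$ and $2\pi - \tilde s_f$ as the angle defects, yields \eqref{equ:lem_sum} with the right-hand side equal to $2\pi$, the total being the Euler characteristic contribution of a disc.

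**Main obstacle.**

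The genuine difficulty is the corner bookkeeping: getting the sign convention $\sgn(e)=\pm1$ to match the geometry (one versus three adjacent faces in $\Fs^\T_c$), and verifying that at a degree-$4$ self-intersection vertex of $c$ the two corners are handled correctly so that the surviving angles are precisely $\tilde\theta_e$ (for $e\sim v$) or $\tilde\theta_{e^*}$ (for $e\sim f$) and not a mixture. I expect to reuse verbatim the four-case vertex analysis of Lemma~\ref{lem:flat} (vertices disjoint from the enclosed region, degree-$2$ straight vertices, degree-$4$ transversal self-intersections, and interior non-$c$ vertices), the only new work being to track, at corners, which of the two angles $\tilde\theta_e,\tilde\theta_{e^*}$ fails to cancel and with which sign. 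Once that local analysis is pinned down, the global identity follows by summation and a single application of the disc's Euler characteristic to produce the $2\pi$ on the right.
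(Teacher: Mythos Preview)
Your approach is essentially the paper's: sum the total angles over the faces of~$\Gs^\T$ enclosed by~$c$, use~$\tilde\theta_e+\tilde\theta_{e^*}=\pi$ to see that non-corner vertices contribute constants ($\pi$ or~$2\pi$) while corners contribute exactly the~$\sgn(e)\,\tilde\theta_e$ or~$\sgn(e)\,\tilde\theta_{e^*}$ terms, and then invoke the Euler characteristic of a disc to get the~$2\pi$ on the right.

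One small correction: the vertex classification you should use is not the four-case analysis of Lemma~\ref{lem:flat} (which is tailored to general closed curves with possible self-intersections), but rather the classification by degree in the subgraph~$\Gs_c^\T$ bounded by~$c$. Since~$c$ is \emph{simple}, there are no transversal self-intersections; instead the three relevant types are degree-$2$ vertices (corners with~$\sgn(e)=+1$, contributing a single~$\tilde\theta_e$ or~$\tilde\theta_{e^*}$), degree-$3$ vertices (non-corner boundary vertices, contributing~$\pi$), and degree-$4$ vertices (either interior, contributing~$2\pi$, or corners with~$\sgn(e)=-1$, contributing~$2\pi-\tilde\theta_e$ or~$2\pi-\tilde\theta_{e^*}$). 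The combinatorial identity you need is then~$\pi|\Vs_3|+2\pi|\Vs_4|=2\pi|\Fs^\T_c|-2\pi$, which follows from the handshake lemma and Euler's formula for the disc. With this bookkeeping made explicit, your argument goes through.
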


\begin{proof}
Throughout this proof, we work in the subgraph~$\Gs_c^\T$
of~$\Gs^\T$ bounded by the simple cycle~$c$.
There are three types of vertices in this subgraph:
vertices of degree 2 in~$\Gs_c^\T$ (which are corners bounding exactly one face), vertices of degree 3 (which are non-corner boundary vertices), and vertices of degree 4 (which can be either inner vertices, or corners bounding three faces).
We denote these set of vertices of~$\Gs_c^\T$ by $\Vs_2,\Vs_3,\Vs_4$ respectively.

When summing the left-hand side of equations~\eqref{equ:flat} defining flatness on all faces of~$\Gs_c^\T$, we have the following contributions:
\begin{itemize}
 \item every (corner) vertex $e$ of degree~$2$ is incident to a single
face of $\Gs^\T$; it contributes~$\tilde{\theta}_e$, resp.\ $\tilde{\theta}_{e^*}$
to the sum if this face corresponds to a vertex, resp.\ a face of $\Gs$.
 \item every vertex $e$ of degree~$3$ contributes~$\tilde{\theta}_e+\tilde{\theta}_{e^*}=\pi$.
 \item every inner vertex $e$ (of degree 4) contributes $\tilde{\theta}_e+\tilde{\theta}_{e^*}+\tilde{\theta}_e+\tilde{\theta}_{e^*}=2\pi$.
 \item every corner vertex $e$ of degree~$4$ is incident to three
faces of $\Gs^\T$; it contributes~$\tilde{\theta}_{e^*}+\tilde{\theta}_e+\tilde{\theta}_{e^*}=2\pi-\tilde{\theta}_e$ if only one of these faces corresponds to a vertex of $\Gs$, and~$2\pi-\tilde{\theta}_{e^*}$ in the other case.
\end{itemize}
As a consequence, we have the equation:
\[
\sum_{v\in \Vs_c}\tilde{s}_v +\sum_{f\in \Vs_c^*}\tilde{s}_f = \pi |\Vs_3|+2\pi |\Vs_4|
+\sum_{e \text{ corner},\, e\sim v}\!\!\!\sgn(e)\,\tilde{\theta}_e +\sum_{e \text{ corner},\, e\sim f}\!\!\!\sgn(e)\,\tilde{\theta}_{e^*}\,.
\]
Since $|\Vs_c|+|\Vs_c^*|=|\Fs^\T_c|$, we are left with showing that~$|\Vs_3|+2|\Vs_4|=2|\Fs^\T_c|-2$.
This follows from the three equalities~$|\Vs_2|+|\Vs_3|+|\Vs_4|=|\Vs^\T_c|$,
~$2|\Vs_2|+3|\Vs_3|+4|\Vs_4|=2|\Es^\T_c|$ and~$2|\Vs_2|+|\Vs_3|=2|\T_c|$
(which already appeared in the proof of Lemma~\ref{lem:corner}), together with the Euler
characteristic computation~$|\Vs^\T_c|-|\Es^\T_c|+|\Fs^\T_c|=1$. This concludes the proof.
\end{proof}

\begin{rem}\label{rem:lem_sum}$\,$
\begin{itemize}
\item If the isoradial immersion is assumed to be flat, then by definition, we have~$\tilde{s}_v=\tilde{s}_f=2\pi$ for every vertex $v$ and every face $f$ of $\Gs$. Therefore, Equation~\eqref{equ:lem_sum} simply becomes~$
\sum_{e\sim v}\sgn(e)\,\tilde{\theta}_e +\sum_{e\sim f}\sgn(e)\,\tilde{\theta}_{e^*}=2\pi$, the sum being over corner vertices.
\item If $k\equiv 0$, or equivalently if we consider the rhombic immersion of $\GR$ corresponding to $\alpha\in Z_\Gs$, then the lifted rhombus angles are in $[0,2\pi)$, and the same equation as~\eqref{equ:lem_sum} holds without the ``tilde''. In particular, in the flat case, we get
\begin{equation}\label{equ:lem_sum_1}
\sum_{e \text{ corner},\, e\sim v}\!\!\!\sgn(e)\,\theta_e +\sum_{e \text{ vertex},\, e\sim f}\!\!\!\sgn(e)\,\theta_{e^*}=2\pi\,.
\end{equation}
It is this special case of our Gauss--Bonnet formula that will prove useful.
\end{itemize}
\end{rem}

\section{Minimal immersions of bipartite graphs}
\label{sec:minimal}

As already mentioned, Theorem~\ref{thm:main} is in the same spirit as the main result of~\cite{KeSchlenk} but concerns more general objects, namely flat isoradial immersions of graphs without train-track loops. We now obtain an analogous result for an intermediate class, namely minimal immersions of minimal bipartite graphs, and give an application to the study of the dimer model on such graphs. The main definitions and statements are given in Section~\ref{sub:min}, and the proofs in Section~\ref{sub:proofmin}. Section~\ref{sub:condition} contains our application to dimers.

In all of this section, we suppose that the planar graph~$\Gs$ is such that~$\Gs^\T$ does not contain closed loops. For simplicity, we also assume that~$\Gs$ does not contain any unbounded faces.

\subsection{Minimal immersions}
\label{sub:min}

Recall from Section~\ref{sub:def} that given a planar embedded graph~$\Gs$, an
isoradial immersion is built from angles~$\alpha\in Z_\Gs$ and
integers~$k\in\ZZ^\Es$; an isoradial immersion is flat if the corresponding
total angle at each
face and vertex is equal to~$2\pi$.

We now assume that~$\Gs$ is bipartite (recall Section~\ref{sub:ttmin}). Using the consistent orientation of train-tracks, the space~$Z_\Gs$ can be identified with the space of angle maps
\[
\alpha\colon\Tbip\to\RR/2\pi\ZZ.
\]
We focus on \new{\emph{flat}} isoradial immersions given by such an~$\alpha\in Z_\Gs$, but with~$k$ identically zero
and non-degenerate rhombi. In other words:

\begin{defi}\label{def:min}
A flat isoradial immersion of a planar, embedded bipartite graph~$\Gs$ is called
a \emph{minimal immersion} of~$\Gs$ if all rhombus angles at vertices of~$\Gs$ belong to~$(0,2\pi)$.
\end{defi}

We give a concrete geometric characterization of minimal immersion in Proposition~\ref{prop:min} below, but let us first progress towards stating the main result of this section.

Note that an isoradial immersion with~$k=0$ is fully determined by
the angle map~$\alpha\in Z_\Gs=\{\Tbip\to\RR/2\pi\ZZ\}$. However, because of the flatness
and non-degeneracy conditions, not every~$\alpha$ gives a 
minimal immersion, and not every graph actually admits such an immersion.
Our main result characterizes graphs admitting a minimal immersion, and for such a graph~$\Gs$, the subspace of~$Z_\Gs$ given by the angles which define such an immersion.

To state this theorem, we need two additional notation. Recall the global cyclic order on~$\Tbip$ given in Definition~\ref{def:order}, and for each vertex~$v$ and face~$f$ of~$\Gs$, the local cyclic orders on the sets~$\Tbip(v)$ and~$\Tbip^\bullet(f),\Tbip^\circ(f)$ defined in Section~\ref{sub:ttmin}.

Recall also that a map~$\alpha\colon T\to\RR/2\pi\ZZ$ on a cyclically ordered set~$T$ is called \emph{monotone} if it respects the cyclic order on~$T$ and the natural cyclic order on~$\RR/2\pi\ZZ$, \emph{i.e.}, whenever we have~$[t_1,t_2,t_3]$ in~$T$, then we have~$[\alpha(t_1),\alpha(t_2),\alpha(t_3)]$ in~$\RR/2\pi\ZZ$. Note that in general we do not ask for strict monotonicity.

\begin{defi}
\label{def:X-Y}
Let~$X_\Gs\subset Z_\Gs$ denote the set of monotone maps~$\alpha\colon\Tbip\to\RR/2\pi\ZZ$ mapping
pairs of intersecting or anti-parallel train-tracks to distinct angles.
Similarly, we denote by~$Y_\Gs$ the subset of~$Z_\Gs$ given by the angle maps whose restriction to~$\Tbip(v)$ is monotone, injective and non-constant for all
vertices~$v$, and whose restrictions to~$\Tbip^\bullet(f)$ and~$\Tbip^\circ(f)$ are monotone and non-constant for all faces~$f$ of~$\Gs$.
\end{defi}

A few remarks are in order. First note that for any graph~$\Gs$, the set~$X_\Gs$ is clearly non-empty. Next, as we will prove below, Lemmas~\ref{lem:vertex_1} and~\ref{lem:face_1} imply that if~$\Gs$ is minimal, then we have the inclusion~$X_\Gs\subset Y_\Gs$. Since $X_\Gs$ is non-empty, this implies that $Y_\Gs$ is also non-empty for~$\Gs$ minimal.

We will actually show that~$Y_\Gs$ is non-empty if and only if~$\Gs$ is minimal.
This is a direct consequence of the main result of this section, that we are now ready to state.

\begin{thm}
\label{thm:min}
An embedded, planar, bipartite graph~$\Gs$ admits a minimal immersion if and only if~$\Gs$ is minimal. In any case, the space of immersions is given by~$Y_\Gs$.
\end{thm}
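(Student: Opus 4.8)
The plan is to split the statement into its two assertions and treat them by different means. By Definition~\ref{def:min}, a minimal immersion is exactly a flat isoradial immersion with $k\equiv 0$ and all primal rhombus angles $\theta_e\in(0,2\pi)$; so for $k\equiv 0$ we have $\tilde\theta_e=\theta_e$ and the defining conditions on $\alpha\in Z_\Gs$ are the flatness equations~\eqref{equ:flat}, namely $\sum_{e\sim v}\theta_e=2\pi$ and $\sum_{e^*\sim f}(\pi-\theta_e)=2\pi$, together with non-degeneracy. I would therefore first prove the equivalence ``$\alpha$ defines a minimal immersion $\iff\alpha\in Y_\Gs$'' by a purely local translation of these conditions, one vertex and one face at a time; this already yields the second assertion (the space of immersions is $Y_\Gs$) and reduces the existence statement to deciding when $Y_\Gs\neq\emptyset$. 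The forbidden-configuration half is then handled directly, by a Gauss--Bonnet obstruction, rather than through $Y_\Gs$.

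For the translation, fix $\alpha$ and $k\equiv 0$. Around a vertex $v$, the edges of $\GR$ at $v$ are unit vectors with directions $e^{i\alpha(\tr)}$, $\tr\in\Tbip(v)$, and $\theta_e$ is the angular gap between two consecutive such vectors; hence $\sum_{e\sim v}\theta_e=2\pi$ with every $\theta_e\in(0,2\pi)$ says precisely that, running through $\Tbip(v)$ in its local cyclic order, these directions wind exactly once around $\RR/2\pi\ZZ$ without repetition. Here I would record the elementary fact that a monotone injective map of a finite cyclic set into the circle automatically has winding number one, which is why ``monotone, injective, non-constant'' on $\Tbip(v)$ captures the quantitative condition $\sum\theta_e=2\pi$ (injectivity being equivalent to all $\theta_e>0$). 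The face case is the same computation run on the dual angles $\theta_{e^*}=\pi-\theta_e$, which now lie in $(-\pi,\pi)$ and may be negative; the alternation of colours along $\partial f$ splits $\Tbip(f)$ into $\Tbip^\bullet(f)$ and $\Tbip^\circ(f)$, and I would show that $\sum_{e^*\sim f}\theta_{e^*}=2\pi$ together with non-degeneracy is equivalent to each of the two restrictions being monotone and non-constant, injectivity now failing exactly because dual angles can be negative. This reproduces Definition~\ref{def:X-Y} verbatim and proves the second assertion. This face computation, where one must check that the two interleaved half-sequences each wind once and combine to a single turn, is the more delicate part of the dictionary.

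For the ``if'' direction of the first assertion, assume $\Gs$ is minimal. Since $X_\Gs$ is always non-empty, it suffices to prove $X_\Gs\subseteq Y_\Gs$. Given $\alpha\in X_\Gs$, Lemma~\ref{lem:vertex_1} gives that the strands of $\Tbip(v)$ lie on distinct train-tracks and that the global cyclic order restricts to the local one at $v$, so $\alpha|_{\Tbip(v)}$ is monotone; injectivity follows once one checks, using minimality, that two distinct strands at $v$ are never parallel, so that they are intersecting or anti-parallel and $X_\Gs$ forces distinct angles, while non-constancy excludes degree-one vertices. Likewise Lemma~\ref{lem:face_1} gives that the global order restricts to the local orders on $\Tbip^\bullet(f)$ and $\Tbip^\circ(f)$ and that each contains an anti-parallel (hence, under $X_\Gs$, distinctly coloured) pair, whence monotonicity and non-constancy. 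Thus $X_\Gs\subseteq Y_\Gs$, so $Y_\Gs\neq\emptyset$, and by the translation $\Gs$ admits a minimal immersion.

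The main obstacle is the ``only if'' direction, which I would prove by contradiction using the flat, $k\equiv0$ Gauss--Bonnet identity~\eqref{equ:lem_sum_1}. If $\Gs$ is not minimal it carries a self-intersecting train-track or a parallel bigon, and one can extract an innermost, \emph{simple} cycle in $\Gs^\T$ of the corresponding type. A self-intersection yields a simple closed curve with a single corner $e$, and~\eqref{equ:lem_sum_1} forces $\sgn(e)\,\theta_e=2\pi$ or $\sgn(e)\,\theta_{e^*}=2\pi$, impossible since $\theta_e\in(0,2\pi)$ and $\theta_{e^*}\in(-\pi,\pi)$. A parallel bigon yields a simple closed curve with exactly two corners $x,y$, and the crux is to read off the \emph{type} of each corner from the orientations: this is exactly where the word ``parallel'' (both strands traversing the bigon in the \emph{same} direction) is used. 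I expect to show, via the consistent orientation and the conventions of Figure~\ref{fig:quad}, that a common direction forces both corners to be of dual type with $\sgn=+1$, so that~\eqref{equ:lem_sum_1} reduces to $\theta_{x^*}+\theta_{y^*}=2\pi$, again impossible since each term is strictly less than $\pi$. As a sanity check, an anti-parallel bigon would instead produce primal-type corners and the satisfiable relation $\theta_x+\theta_y=2\pi$, consistent with anti-parallel bigons (for instance those bounding digon faces of a minimal graph) being harmless; determining these corner types carefully is the heart of the whole argument.
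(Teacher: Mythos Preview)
Your proposal follows essentially the same route as the paper: the local translation you describe is precisely Lemma~\ref{lem:minflat}, the ``if'' direction via $X_\Gs\subset Y_\Gs$ using Lemmas~\ref{lem:vertex_1} and~\ref{lem:face_1} is identical, and the ``only if'' direction via the flat Gauss--Bonnet identity~\eqref{equ:lem_sum_1} on an innermost bigon matches the paper's argument. One small point: your treatment of the self-intersection case is unnecessarily roundabout. At a self-intersection the two strands through the corner~$e$ belong to the \emph{same} train-track, so $\theta_e$ is the lift in $[0,2\pi)$ of $[\alpha(\tr)-\alpha(\tr)]=0$, i.e.\ $\theta_e=0$; this is already a degenerate rhombus, contradicting the definition of minimal immersion directly without any appeal to Gauss--Bonnet, and in fact your stated justification ``impossible since $\theta_e\in(0,2\pi)$'' is awkward precisely because $\theta_e=0$ at that edge. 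The paper handles it this way, reserving Gauss--Bonnet for the parallel bigon; there it also observes the sharper identity $\theta_{e_1}=\theta$, $\theta_{e_2}=2\pi-\theta$ (same two train-tracks at both corners), giving $\theta_{e_1^*}+\theta_{e_2^*}=0$ rather than merely $<2\pi$.
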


The proof of Theorem~\ref{thm:min} is postponed to the next section.

Since the space of minimal immersions of a graph is given by~$Y_\Gs$, which contains~$X_\Gs$
if~$\Gs$ is minimal, it is natural to wonder whether these two spaces coincide for minimal graphs.
This turns out to be the case in the~$\ZZ^2$-periodic case, see Corollary~\ref{cor:X=Y} below.

\medskip

The following proposition gives the promised geometric characterization of minimal immersions. Prior to stating it, let us make several remarks.

\begin{rem}\label{rem:def_min}$\,$
\begin{enumerate}
\item By Point 2. of Remark~\ref{rem:def_iso_imm}, boundary vertices of
  primal/dual faces corresponding to dual/primal vertices of $\Gs$ are mapped by
  the rhombic immersion to points on a circle of unit radius.
These points on the unit circle are endowed with a cyclic order, and it makes sense to compare the cyclic order of these vertices
 in the planar embeddings of $\Gs,\Gs^*$
 and the cyclic order of their images in the rhombic immersion of $\GR$. We use these two orders in Proposition~\ref{prop:min} below.

\item If a graph has a vertex of degree~$1$,
 then one easily checks that it does not admit any minimal immersion.
 Therefore, in stating the following geometric description of minimal immersions, we can assume that the graph does not have such a vertex.
\end{enumerate}
\end{rem}

\begin{prop}\label{prop:min}
Let~$\Gs$ be a planar, embedded, bipartite graph without degree~$1$ vertices.
Then, a minimal immersion of~$\Gs$ is equivalent to a rhombic immersion of~$\GR$ satisfying the following conditions:
\begin{enumerate}
 \item For every vertex of~$\Gs$, the boundary vertices of the corresponding face of~$\Gs^*$ are
 all mapped to distinct points,
 and the cyclic orders on these points coincide in the embedding of~$\Gs^*$ and in the rhombic immersion; in particular there is at most one folded rhombus around the vertex.
\item For every face of~$\Gs$, the white (resp. black) boundary vertices are not all mapped to the same point,
and the cyclic orders on the boundary vertices in the embedding of~$\Gs$ and in the rhombic immersion  differ exactly by elementary transpositions given by the boundary edges corresponding to the folded rhombi (in the case of a degree 4 face, there is at most one such edge).
\end{enumerate}
\end{prop}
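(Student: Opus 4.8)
The plan is to unwind both directions of the claimed equivalence using the isoradial property (Point~2 of Remark~\ref{rem:def_iso_imm}). Since $k\equiv 0$, a minimal immersion is exactly a rhombic immersion whose rhombus angles $\theta_e=\tilde\theta_e$ lie in $(0,2\pi)$ and satisfy the flatness equations~\eqref{equ:flat}, namely $\sum_{e\sim v}\theta_e=2\pi$ and $\sum_{e^*\sim f}(\pi-\theta_e)=2\pi$. Thus I must show that flatness together with $\theta_e\in(0,2\pi)$ is equivalent, vertex by vertex, to the first condition, and face by face, to the second. Throughout I use that the boundary vertices of the $\Gs^*$-face around a primal vertex $v$ (resp. of the $\Gs$-face $f$) are mapped to the unit circle centred at the image of $v$ (resp. of $f$), so that the relevant data is the cyclic sequence of angular positions of these images on that circle.

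For the vertex statement, I would fix $v$ of degree $n\ge 2$ and label the incident edges $e_1,\dots,e_n$ and adjacent faces $f_1,\dots,f_n$ in the cyclic order of the embedding, with $f_i$ lying between $e_i$ and $e_{i+1}$. By definition of the rhombus angle and the orientation conventions of Section~\ref{sub:gen_def}, the image of $f_i$ is obtained from that of $f_{i-1}$ by a counterclockwise rotation of angle $\theta_{e_i}$, so the angular position of $f_i$ is $\beta_0+\sum_{j\le i}\theta_{e_j}$. Writing $\delta_i$ for the direction from $v$ to $f_i$ and summing the congruences $\theta_{e_i}\equiv\delta_i-\delta_{i-1}\pmod{2\pi}$ telescopes to $\sum_{e\sim v}\theta_e\equiv 0\pmod{2\pi}$; since each $\theta_e\in[0,2\pi)$, this sum equals $2\pi m$ with $0\le m\le n-1$, and flatness is precisely $m=1$. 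The $n$ images are then distinct and cyclically ordered as in the embedding if and only if every step $\theta_{e_j}$ is strictly positive (distinctness of consecutive images) and the positions wind exactly once ($m=1$); this is exactly $\theta_e\in(0,2\pi)$ together with flatness at $v$, and the ``at most one folded rhombus'' clause follows since two edges with $\theta_e>\pi$ would force $\sum_{e\sim v}\theta_e>2\pi$.

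For the face statement, I would fix $f$ of degree $2m$, label its boundary edges $e_1,\dots,e_{2m}$ and boundary vertices $u_1,\dots,u_{2m}$ (alternately black and white) cyclically, and note that consecutive images now differ by the \emph{dual} angle $\pi-\theta_{e_j}$, positive for embedded rhombi and negative for folded ones. The same telescoping shows the total advance is a multiple of $2\pi$, with flatness equivalent to total winding one. Assuming flatness and $\theta_e\in(0,2\pi)$, the first condition applied at each boundary vertex gives at most one folded rhombus per vertex, so the edges carrying a negative step are pairwise non-adjacent along $\partial f$; moreover flatness at the two endpoints of such a folded edge $e_j$ bounds the backward step through $\theta_{e_{j-1}}+\theta_{e_j}\le 2\pi$ and $\theta_{e_j}+\theta_{e_{j+1}}\le 2\pi$, forcing the reversal to exchange only $u_j$ and $u_{j+1}$, i.e.\ to act as the transposition of the two endpoints of $e_j$. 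As the folded edges are disjoint these transpositions commute, and their product is exactly the discrepancy between the embedding order and the immersion order (for a degree~$4$ face the sum constraint leaves at most one folded edge). Conversely, a cyclic order obtained from the embedding order by such transpositions has winding number one, yielding flatness, and $\theta_e\in(0,2\pi)$ transfers from the first condition. Finally, the clause that the white (resp. black) boundary vertices are not all mapped to a single point amounts to the two-step sums $\pi-\theta_{e_{2j-1}}+\pi-\theta_{e_{2j}}$ not all vanishing, their vanishing being equivalent to every boundary vertex of the opposite colour having degree $2$ — a degenerate configuration that this clause (equivalently, the non-constancy built into the definition of $Y_\Gs$) excludes.

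The main obstacle will be the face statement, specifically proving that each negative step produces exactly an \emph{elementary} transposition rather than a longer reordering: this is where the global flatness at $f$ must be combined with the local flatness (the first condition) at the two endpoints of each folded edge to bound the magnitude of the backward angular step, while the ``at most one folded rhombus per vertex'' consequence is what guarantees that distinct folded edges are non-adjacent and hence yield commuting disjoint transpositions. The remaining care goes into the borderline degenerate cases $\theta_e=\pi$ and degree-$2$ boundary vertices, where two prescribed images may collide; these are precisely the situations governed by the non-collision clauses in both conditions.
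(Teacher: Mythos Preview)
Your approach is essentially the paper's: reduce a minimal immersion to flatness together with~$\theta_e\in(0,2\pi)$, then translate these conditions at each vertex and face into the cyclic-order statements via the isoradial property. The vertex argument is identical. For faces, the paper takes a slightly cleaner route: rather than analysing the full bichromatic sequence and arguing directly that the negative steps yield only elementary transpositions, it passes through Lemma~\ref{lem:minflat} and an intermediate ``Point~$2'$'' asserting that the cyclic order of the white (resp.\ black) boundary vertices \emph{alone} is preserved. Since each transposition swaps adjacent vertices of opposite colour, your Point~2 immediately implies Point~$2'$; conversely, given Point~1, the two-step sums~$2\pi-\theta_{e_j}-\theta_{e_j'}$ lie in~$[0,2\pi)$, so monotonicity and non-constancy on each colour class force the winding to be exactly one, which is the face flatness you assert but do not fully justify in your converse. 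Your bounding of the backward step by the neighbouring forward steps (via~$\theta_{e_{j-1}}+\theta_{e_j}\le 2\pi$) is exactly what the paper uses to pass back from Point~$2'$ to Point~2, so the two arguments match once the monochromatic reformulation is made explicit.
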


The second point is illustrated in the left side of Figure~\ref{fig:typical_face_with_folded}.
The proof of this proposition is given in Section~\ref{sub:proofmin}, but let us already
state several consequences.

First, folded rhombi in minimal immersions are isolated, in the sense
that they meet at most along dual vertices.
The right side of Figure~\ref{fig:typical_face_with_folded} shows a typical such folded rhombus,
together with adjacent
rhombi.

Also, note that the first condition implies that a minimal immersion of~$\Gs$ yields a
local embedding of its dual graph~$\Gs^*$
(with the slight abuse that degree 2 faces collapse to a segment) so that each face of $\Gs^*$ is inscribed in a circle of radius~$1$.
However, this is not an isoradial embedding of~$\Gs^*$ in general, as the circumcenter of a face might not belong to the interior of that face.

\begin{figure}[ht]
  \begin{subfigure}[b]{.58\linewidth}
    \centering
    \def\svgwidth{8cm}
\begingroup%
  \makeatletter%
  \providecommand\color[2][]{%
    \errmessage{(Inkscape) Color is used for the text in Inkscape, but the package 'color.sty' is not loaded}%
    \renewcommand\color[2][]{}%
  }%
  \providecommand\transparent[1]{%
    \errmessage{(Inkscape) Transparency is used (non-zero) for the text in Inkscape, but the package 'transparent.sty' is not loaded}%
    \renewcommand\transparent[1]{}%
  }%
  \providecommand\rotatebox[2]{#2}%
  \newcommand*\fsize{\dimexpr\f@size pt\relax}%
  \newcommand*\lineheight[1]{\fontsize{\fsize}{#1\fsize}\selectfont}%
  \ifx\svgwidth\undefined%
    \setlength{\unitlength}{668.42794056bp}%
    \ifx\svgscale\undefined%
      \relax%
    \else%
      \setlength{\unitlength}{\unitlength * \real{\svgscale}}%
    \fi%
  \else%
    \setlength{\unitlength}{\svgwidth}%
  \fi%
  \global\let\svgwidth\undefined%
  \global\let\svgscale\undefined%
  \makeatother%
  \begin{picture}(1,0.3713086)%
    \lineheight{1}%
    \setlength\tabcolsep{0pt}%
    \put(0,0){\includegraphics[width=\unitlength,page=1]{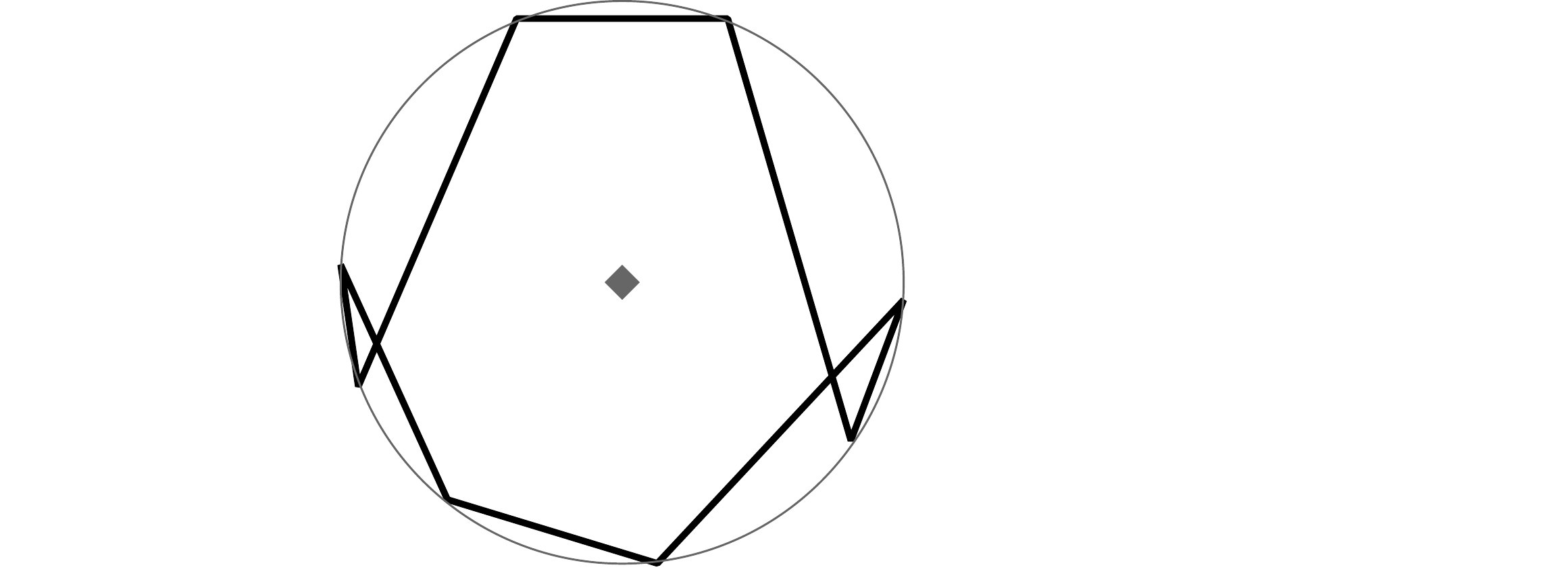}}%
    \put(0.41925257,0.19114678){\color[rgb]{0,0,0}\makebox(0,0)[lt]{\lineheight{1.25}\smash{\begin{tabular}[t]{l}$f$\end{tabular}}}}%
    \put(0,0){\includegraphics[width=\unitlength,page=2]{typical_face_iso_immersion.pdf}}%
    \put(-0.00102999,0.17800278){\color[rgb]{0,0,0}\makebox(0,0)[lt]{\lineheight{1.25}\smash{\begin{tabular}[t]{l}folded$\searrow$\end{tabular}}}}%
    \put(0.6044386,0.15896178){\color[rgb]{0,0,0}\makebox(0,0)[lt]{\lineheight{1.25}\smash{\begin{tabular}[t]{l}$\swarrow$folded\end{tabular}}}}%
  \end{picture}%
\endgroup%

  \end{subfigure}
  \begin{subfigure}[b]{.38\linewidth}
    \centering
    \def\svgwidth{3cm}
    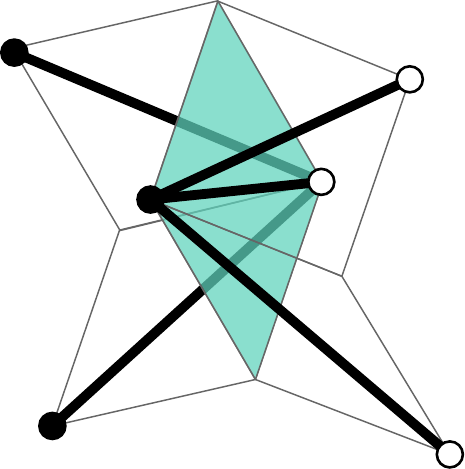
  \end{subfigure}
  \caption{Left: a typical minimal immersion around a face. Right: a folded rhombus in a minimal immersion is isolated.} 
  \label{fig:typical_face_with_folded}
\end{figure}

More technically, a minimal immersion of~$\Gs$ defines a flat metric with cone singularities
on the surface~$S$ given by the union of all (bounded) faces of~$\Gs$, together with a continuous map~$S\to\RR^2$ which is a local isometry away from the ``fold lines'' given by the boundary of the folded rhombi.
The angle at each cone singularity in~$S$ can be different from~$2\pi$ (if it lies on such a fold line),
but the average of these angle around any folded rhombus is equal to~$2\pi$.

\subsection{Proof of Theorem~\ref{thm:min}, of Proposition~\ref{prop:min}, and consequences}
\label{sub:proofmin}

We begin by identifying the set of maps~$\alpha\in Z_\Gs$ defining a flat isoradial immersion of~$\Gs$.

\begin{lem}
\label{lem:minflat}
If~$\Gs$ is a bipartite, planar graph, then the isoradial immersion of~$\Gs$ defined by~$\alpha\in Z_\Gs$
and~$k\equiv 0$ is a minimal immersion if and only if~$\alpha$ belongs to~$Y_\Gs$.
\end{lem}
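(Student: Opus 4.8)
The plan is to fix $k\equiv 0$, so that $\tilde\theta_e=\theta_e\in[0,2\pi)$ for every edge, and to observe that by Definition~\ref{def:min} the immersion defined by $\alpha$ is minimal precisely when it is non-degenerate ($\theta_e\in(0,2\pi)$ for all $e$) and flat, i.e.\ the equations~\eqref{equ:flat} hold at every inner vertex and every inner face. Since each flatness equation and each non-degeneracy constraint is purely local, I would prove the lemma by establishing two local equivalences and then assembling them over all vertices and faces: that the conditions at a vertex $v$ are equivalent to $\alpha|_{\Tbip(v)}$ being monotone, injective and non-constant, and that the condition at a face $f$ is equivalent to $\alpha|_{\Tbip^\bullet(f)}$ and $\alpha|_{\Tbip^\circ(f)}$ being monotone and non-constant. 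Together these are exactly the defining conditions of $Y_\Gs$ (Definition~\ref{def:X-Y}).

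For the vertex analysis, I would unwind the rhombic immersion around $v$: going around $v$ in the plane, the incident rhombi are glued in the local cyclic order of $\Tbip(v)$, and the rhombus angle $\theta_e$ at $v$ is the angle between the two unit vectors $e^{i\alpha(\vec t)}$ associated to the two strands of $\Tbip(v)$ flanking $e$. Consecutive rhombi share such a strand, so the $\theta_e$ are precisely the successive angular gaps between the directions $\{e^{i\alpha(\vec t)}:\vec t\in\Tbip(v)\}$ taken in the local cyclic order, the consistent orientation guaranteeing that all gaps are measured in the same rotational direction. Hence flatness at $v$ ($\sum_{e\sim v}\theta_e=2\pi$) together with $\theta_e\in(0,2\pi)$ holds if and only if these directions are pairwise distinct and wind exactly once around the circle while respecting the local cyclic order, that is, if and only if $\alpha|_{\Tbip(v)}$ is monotone, injective and non-constant. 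I would also note that the two train-tracks crossing a given edge $e$ are exactly the two strands of $\Tbip(v')$ flanking $e$ at an endpoint $v'$, so injectivity on $\Tbip(v')$ forces them to receive distinct angles, i.e.\ $\theta_e\neq 0$; thus non-degeneracy of every rhombus is already encoded in the vertex conditions.

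For the face analysis, I would use Point~2 of Remark~\ref{rem:def_iso_imm}: the boundary vertices of the face of $\GR$ around a dual vertex $f$ are immersed onto the unit circle centered at $f$, say at directions $e^{i\beta(v')}$, and the dual rhombus angle $\tilde\theta_{e^*}=\pi-\theta_e$ of a boundary edge $e=v'v''$ is exactly the angular gap from $\beta(v')$ to $\beta(v'')$. Thus flatness at $f$ ($\sum_{e^*\sim f}(\pi-\theta_e)=2\pi$) says that the $\beta(v')$ wind exactly once around the circle as $v'$ runs along $\partial f$. The boundary vertices alternate in colour, and each direction $\beta(v')$ equals, up to the fixed sign dictated by the consistent orientation, the angle $\alpha(\vec t)$ of the strand of $\Tbip(f)$ turning around $v'$, a strand of $\Tbip^\bullet(f)$ if $v'$ is black and of $\Tbip^\circ(f)$ if $v'$ is white. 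Since these two colour families interleave around $f$, and the gap across any single rhombus is the nonzero quantity controlled by the non-degeneracy secured at the vertices, winding exactly once is equivalent to each family $\alpha|_{\Tbip^\bullet(f)}$ and $\alpha|_{\Tbip^\circ(f)}$ being monotone and non-constant; crucially, injectivity is \emph{not} forced here, which is exactly what permits isolated folded rhombi.

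The main obstacle I anticipate is the face computation: carefully identifying which oriented strand, and with which of the two signs coming from the left/right convention of the consistent orientation, realizes the direction $\beta(v')$ of each $\GR$-edge $fv'$, and then showing rigorously that the single global winding condition splits into a separate monotonicity condition for each of the two interleaved colour families, without requiring injectivity. The vertex case is essentially a clean ``angles add up around a point'' argument, whereas the face case requires tracking the colour alternation and the orientation signs, and it is there that the asymmetry in Definition~\ref{def:X-Y} between the vertex conditions (injective) and the face conditions (merely monotone) must be accounted for.
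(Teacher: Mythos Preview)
Your overall structure---first the vertex condition, then the face condition, with $k\equiv 0$ fixed throughout---is exactly the paper's. The vertex argument is also essentially identical: the $\theta_e$ around $v$ are the successive angular gaps between the $e^{i\alpha(\tr)}$ in the local cyclic order, so flatness plus non-degeneracy at $v$ is precisely monotonicity, injectivity and non-constancy of $\alpha|_{\Tbip(v)}$.

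The face argument, however, has a genuine gap. Your plan is to read off the dual angles $\pi-\theta_e\in(-\pi,\pi)$ as signed gaps between the directions $\beta(v')$ of consecutive boundary vertices on the unit circle around $f$, observe that these gaps sum to $2\pi$, and then conclude that this single ``winding once'' condition splits into separate monotonicity conditions on the two interleaved colour families. You justify the splitting by invoking only \emph{non-degeneracy} (``the gap across any single rhombus is the nonzero quantity controlled by the non-degeneracy secured at the vertices''). That is not enough: non-degeneracy only says each gap lies in $(-\pi,\pi)\setminus\{0\}$, and a signed sum of such gaps equalling $2\pi$ does not by itself force either colour subsequence to be cyclically monotone.

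What the paper actually uses here is the \emph{full} vertex monotonicity, not just non-degeneracy. Around each white boundary vertex of $f$ you see three consecutive strands $\tr'_j,\tr_j,\tr'_{j+1}$ with $\tr'_j,\tr'_{j+1}\in\Tbip^\bullet(f)$; monotonicity at that vertex gives $\theta_{e_j}+\theta_{e'_j}<2\pi$, so the pair of dual gaps on either side of the white vertex combines to $2\pi-(\theta_{e_j}+\theta_{e'_j})$, which is precisely the $[0,2\pi)$-lift of $[\alpha(\tr'_{j+1})-\alpha(\tr'_j)]$. The face flatness then becomes $\sum_j\big(\text{lift of }[\alpha(\tr'_{j+1})-\alpha(\tr'_j)]\big)=2\pi$, which is exactly monotonicity and non-constancy of $\alpha|_{\Tbip^\bullet(f)}$. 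Pairing instead around black boundary vertices gives the $\circ$-condition. So the face argument does not ``split'' by interleaving; it reduces to one colour family at a time by \emph{pairing} gaps across each vertex of the other colour, and this pairing step consumes the vertex monotonicity you established first.
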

\begin{proof}
Recall that the isoradial immersion of $\Gs$ with $k\equiv 0$ is minimal if and only if
all the rhombus angles belong to~$(0,2\pi)$, and the sum of the rhombus angles around every vertex and face of $\Gs$ is equal to $2\pi$.
Let us first consider a vertex~$v$ of~$\Gs$. If this vertex is of degree~$1$, then the isoradial immersion of~$\Gs$ defined by any~$\alpha\in Z_\Gs$ has angle~$0$ at~$v$, so no angle map~$\alpha$ defines a flat immersion. On the other hand, the set~$\Tbip(v)$ has a single element so the restriction of any~$\alpha\in Z_\Gs$ to~$\Tbip(v)$ is constant; it follows that~$Y_\Gs$ is empty. Hence, the lemma holds in this degenerate case, and it can be assumed that~$v$ has degree~$n\geq 2$.
Let us number the adjacent train-tracks
strands~$\Tbip(v)$ and adjacent edges as
described in Figure~\ref{fig:t_bip_v} (left). Then, the total angle around~$v$ is equal
to~$\sum_{j=1}^n\theta_{e_j}$ where by definition, the rhombus
angle~$\theta_{e_j}$ is the unique lift in~$[0,2\pi)$ of the
angle~$[\alpha(\tr_{j+1})-\alpha(\tr_j)]\in\RR/2\pi\ZZ$. Therefore, the total
angle around~$v$ is equal to~$2\pi$ with all rhombus angles in~$(0,2\pi)$ 
if and only if the restriction of~$\alpha$
to~$\Tbip(v)$ is monotone and injective (hence non-constant since~$n>1$).
This is the first condition for~$\alpha$ belonging to the set~$Y_\Gs$. 

Let us now assume that this condition is satisfied around each vertex~$v$ of~$\Gs$
(which implies that all the rhombus angles lie in~$(0,2\pi)$),
and consider a face~$f$.
If this face is of degree~$2$, then any isoradial immersion of~$\Gs$ has a cone angle~$0$ at~$f$
(so the flatness condition is not satisfied)
unless the two corresponding train-tracks are assigned identical angles,
in which case we have two degenerate rhombi; in any case, this is not a minimal immersion.
Also, both sets~$\Tbip^\bullet(f)$ and~$\Tbip^\circ(f)$ have a single element, so~$Y_\Gs$ is empty. Hence, the statement holds in this case, and it can be assumed that~$f$ has degree~$2m$ with~$m\ge 2$. Numbering the adjacent train-track strands~$\Tbip(f)$ as in Figure~\ref{fig:tt_around_face}, the total angle around the face~$f$ is equal to
\[
\sum_{j=1}^{m}(\theta_{e^*_j}+\theta_{e'^*_j})=\sum_{j=1}^{m}(2\pi-(\theta_{e_j}+\theta_{e_j'}))\,,
\]
where by definition, the angle~$\theta_{e_j}\in(0,2\pi)$  is the lift of~$[\alpha(\tr'_j)-\alpha(\tr_j)]$ while~$\theta_{e'_j}\in(0,2\pi)$ is the lift of~$[\alpha(\tr_j)-\alpha(\tr'_{j+1})]$. Observe that, for every~$j$, the train-tracks~$\tr'_j,\tr_j,\tr'_{j+1}$ are three consecutive train-tracks around a (white) vertex~$v$ in the boundary of~$f$. By assumption, the map~$\alpha$ restricted to~$\Tbip(v)$ is monotone, implying that~$2\pi-(\theta_{e_j}+\theta_{e_j'})$ is the unique lift in~$[0,2\pi)$ of~$[\alpha(\tr'_{j+1})-\alpha(\tr'_j)]$.
As a consequence, the total angle around~$f$ is equal to~$2\pi$ if and only if the restriction of~$\alpha$ to~$\Tbip^\bullet(f)$ is monotone and non-constant. This is the second condition for~$\alpha$ belonging to the set~$Y_\Gs$.

Pairing up the angles around~$f$ in the other natural way, and using the first condition around black vertices, we obtain that flatness at~$f$ is equivalent to the restriction of~$\alpha$ to~$\Tbip^\circ(f)$ being monotone. This is the last condition for~$\alpha$ belonging to the set~$Y_\Gs$, and the proof is completed.\qedhere
\end{proof}

\begin{rem}
\label{rem:Y}
It follows from the proof above that if the restriction of~$\alpha\in Z_\Gs$
to~$\Tbip(v)$ is monotone, injective and non-constant for all~$v\in\Vs$, then its restriction
to~$\Tbip^\bullet(f)$ is monotone and non-constant for all~$f\in\Fs$ if and only if its
restriction to~$\Tbip^\circ(f)$ is monotone and non-constant for all~$f\in\Fs$. Therefore, one
could drop one of these two conditions in the definition of~$Y_\Gs$.
\end{rem}

We are now ready to prove our main result.

\begin{proof}[Proof of Theorem~\ref{thm:min}]
We first prove the ``only if'' part using the discrete Gauss--Bonnet formula in the flat case,
\emph{i.e.}, Equation~\eqref{equ:lem_sum_1}.
Let us assume by means of contradiction that a bipartite planar graph $\Gs$ is not minimal but admits a minimal immersion.
Since the graph is not minimal, it either contains a self-intersection or a parallel bigon. 
In the first case, any self-intersection defines a degenerate rhombus (with angle~$\theta=0$),
 contradicting the definition of minimal immersion.
In the second case, consider the simple closed curve~$c$ on $\Gs^\T$ defined by an innermost parallel bigon. There are two corners of $c$, denoted by~$e_1$ and~$e_2$, each of which is incident to a face of~$\Gs$. By definition of the rhombus angles, we have~$\theta_{e_1}=\theta$ and~$\theta_{e_2}=2\pi-\theta$ for some~$\theta>0$. The equality~$\theta_{e_1^*}+\theta_{e_2^*}=0$ follows, contradicting Equation~\eqref{equ:lem_sum_1}.
This shows that a non-minimal graph does not admit a minimal immersion. 
By Lemma~\ref{lem:minflat}, this implies that the set~$Y_\Gs$ is empty for~$\Gs$ non-minimal.

Let us now assume that~$\Gs$ is minimal. By Lemma~\ref{lem:minflat}, an angle map~$\alpha\in Z_\Gs$ defines a minimal immersion of~$\Gs$ if and only if~$\alpha$ belongs to~$Y_\Gs$, so we are left with proving that~$Y_\Gs$ is non-empty. Since~$X_\Gs$ is clearly non-empty, it is enough to check the inclusion~$X_\Gs\subset Y_\Gs$.
Hence, let us fix an element~$\alpha\in X_\Gs$, a vertex~$v$ of~$\Gs$ of degree~$n\ge 1$, and
study the restriction of~$\alpha$ to~$\Tbip(v)$.
Since~$\Gs$ is minimal, we cannot have~$n=1$. If~$n=2$, then the monotonicity condition is trivially satisfied while the injectivity follows from the fact that the two elements of~$\Tbip(v)$ intersect.
For a vertex of degree~$n\ge 3$, Lemma~\ref{lem:vertex_1} together with the definition of~$X_\Gs$
readily imply that the restriction of~$\alpha$ to~$\Tbip(v)$ is monotone and injective.
Let us now turn to a face~$f$ of~$\Gs$ of degree~$2m$, with~$m\ge 2$ since~$\Gs$ is minimal.
Lemma~\ref{lem:face_1} and the definition of~$X_\Gs$ immediately imply that the restrictions
of~$\alpha$ to~$\Tbip^\bullet(f)$ and~$\Tbip^\circ(f)$ are monotone, and non-constant
since non-parallel train-tracks either intersect or are anti-parallel.
This completes the proof.
\end{proof}

We now give the proof of the geometric characterization of minimal immersions.

 \begin{proof}[Proof of Proposition~\ref{prop:min}]
 Let us fix an embedded bipartite planar graph~$\Gs$ without degree~$1$ vertices.
 As first requirement, a minimal immersion of~$\Gs$ is an isoradial immersion with~$k=0$. By Point 1. of Remark~\ref{rem:def_min}, this is equivalent to a rhombic immersion of~$\GR$ or to being given an angle map~$\alpha\in Z_\Gs$. The second requirement is that it needs to be flat with non-degenerate rhombi,
 which by Lemma~\ref{lem:minflat} is equivalent to asking that~$\alpha$ belongs to~$Y_\Gs$. 
We are therefore left with checking that Points 1. and 2. are equivalent to~$\alpha\in Y_\Gs$. 

 Using the notation of the proof of Lemma~\ref{lem:minflat}, we have that in the rhombic immersion
 of $\GR$, boundary vertices of a face of $\Gs^*$ corresponding to a black vertex $b$ of $\Gs$ are immersed as the endpoints of the vectors $e^{i\alpha(\tr_1)},\dots,e^{i\alpha(\tr_n)}$ drawn in a circle of center~$b$. An analogous statement holds for white vertices with minus those vectors.
 Translating the first condition of~$\alpha\in Y_\Gs$ in the language of the immersed boundary vertices, we get Point 1: the immersion is injective and preserves the cyclic order around this face.
 As a consequence of flatness, since a folded rhombus has angle in $(\pi,2\pi)$, there can be at most one
 adjacent to any given vertex.
 
 In a similar way, white, resp. black, boundary vertices of a face $f$ of $\Gs$ are immersed 
 as the endpoints of the vectors $e^{i\alpha(\tr_1)},\dots,e^{i\alpha(\tr_m)}$, resp. $-e^{i\alpha(\tr_1')},\dots,-e^{i\alpha(\tr_m')}$ drawn in a circle of center $f$. Translating the second condition of $\alpha\in Y_\Gs$ in the language of the immersed boundary vertices exactly says that the immersion preserves the cyclic order of the black, resp. white vertices around this face, which we refer to as Point~$2'$.
 We are left with proving that Points 1. and $2$. are equivalent to Points~1. and $2'$.  
Around any face~$f$, several folded rhombi can appear, but the corresponding folded edges are disjoint by Point 1. Let us move around the boundary of the immersed face~$f$ via the (say, counterclockwise) orientation given by the embedding of~$\Gs$. By construction, an immersed edge in~$\partial f$ oriented in this way has the image of the face to its right if and only if it is folded. By flatness, the immersed oriented boundary of~$f$ makes exactly one positive turn around the image of the face. One now easily checks that if the cyclic orders on the boundary vertices given by the embedding of~$\Gs$ and by the rhombic immersion differ by more than the elementary transpositions given by these folded edges, then Point~1. or Point~$2'.$ is contradicted. (In the case of a face of degree~4, flatness readily implies that there is at most one folded edge on its boundary). Conversely, Point~2. readily implies that the cyclic order of the black, resp. the white vertices is preserved in the immersion, \emph{i.e.} Point~$2'.$, thus ending the proof.
\end{proof}

As a corollary of Theorem~\ref{thm:min}, we obtain the following converse to
Lemmas~\ref{lem:vertex_1} and~\ref{lem:face_1}.

\begin{cor}\label{cor:min}
Let~$\Gs$ be a bipartite, planar graph without train-track loops and without degree~$1$ vertices.
Let us assume that the conclusions of Lemmas~\ref{lem:vertex_1} and~\ref{lem:face_1} hold, i.e.
\begin{enumerate}
\item{For any vertex~$v$ of~$\Gs$, all the train-track strands in~$\Tbip(v)$ belong to distinct train-tracks, and the global cyclic order on~$\Tbip$ restricts to the local cyclic order on~$\Tbip(v)$.}
\item{For any face~$f$ of~$\Gs$, there is a pair of train-track strands in~$\Tbip^\bullet(f)$ (resp.~$\Tbip^\circ(f)$) belonging to distinct non-parallel train-tracks, and the global cyclic order on~$\Tbip$ restricts to the local cyclic orders on~$\Tbip^\bullet(f)$ and on~$\Tbip^\circ(f)$.}
\end{enumerate}
Then, the graph~$\Gs$ is minimal.
\end{cor}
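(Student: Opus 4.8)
The plan is to deduce the statement from Theorem~\ref{thm:min} by showing that the two assumptions force the space~$Y_\Gs$ to be non-empty. Recall from Lemma~\ref{lem:minflat} that~$Y_\Gs$ is precisely the set of angle maps~$\alpha\in Z_\Gs$ defining a minimal immersion of~$\Gs$ (with~$k\equiv 0$); hence~$Y_\Gs\neq\emptyset$ is equivalent to~$\Gs$ admitting a minimal immersion, which by Theorem~\ref{thm:min} is equivalent to~$\Gs$ being minimal. As the corollary is stated after the proof of Theorem~\ref{thm:min}, invoking it introduces no circularity.

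To exhibit an element of~$Y_\Gs$, I would fix any~$\alpha\in X_\Gs$, using that~$X_\Gs$ is non-empty for every bipartite planar graph, and then verify the inclusion~$X_\Gs\subset Y_\Gs$. The point is that this inclusion is exactly the one established in the ``if'' part of the proof of Theorem~\ref{thm:min}, where minimality of~$\Gs$ was used \emph{only} to guarantee the conclusions of Lemmas~\ref{lem:vertex_1} and~\ref{lem:face_1}, together with the absence of degree~$1$ vertices. Since these conclusions and the absence of degree~$1$ vertices are now our standing hypotheses, the same verification applies verbatim.

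Concretely, at a vertex~$v$ (of degree~$n\ge 2$, as~$\Gs$ has no degree~$1$ vertex), the assumption on vertices says that the strands of~$\Tbip(v)$ lie on distinct train-tracks and that the global cyclic order restricts to the local one on~$\Tbip(v)$. Monotonicity of~$\alpha|_{\Tbip(v)}$ is then inherited from the monotonicity of~$\alpha$ for the global order; injectivity, and hence non-constancy, follows because consecutive strands cross at their common incident edge and are therefore assigned distinct angles by the defining property of~$X_\Gs$. At a face~$f$, the assumption on faces supplies in each of~$\Tbip^\bullet(f)$ and~$\Tbip^\circ(f)$ a pair of strands on distinct non-parallel train-tracks, and asserts that the global order restricts to the corresponding local orders; monotonicity follows as above, while the distinguished pair, being intersecting or anti-parallel, is sent to distinct angles and yields non-constancy. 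The mere existence of such a pair forces~$f$ to have degree at least~$4$, so the degree~$2$ faces (on which non-constancy would fail) are automatically excluded by this hypothesis. This gives~$\alpha\in Y_\Gs$, whence the conclusion.

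The argument is conceptually light, and I expect the only delicate points to be bookkeeping rather than new ideas: checking that monotonicity together with the pairwise crossing of consecutive strands genuinely upgrades to injectivity around a vertex, and confirming that the low-degree cases (degree~$2$ vertices, and the automatic absence of degree~$2$ faces) are handled correctly. The main structural input is simply the reuse of the~$X_\Gs\subset Y_\Gs$ argument of Theorem~\ref{thm:min}, with its two geometric lemmas replaced by the hypotheses of the corollary.
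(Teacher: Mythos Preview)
Your proposal is correct and follows essentially the same approach as the paper's own proof: both argue that the hypotheses of the corollary are precisely what was extracted from Lemmas~\ref{lem:vertex_1} and~\ref{lem:face_1} in the ``if'' part of the proof of Theorem~\ref{thm:min}, so the inclusion~$X_\Gs\subset Y_\Gs$ carries over verbatim, yielding~$Y_\Gs\neq\emptyset$ and hence minimality by Theorem~\ref{thm:min}. The paper's version is terser (it simply invokes ``the end of the proof of Theorem~\ref{thm:min}'' and notes that Point~2 excludes degree~$2$ faces), but the logic is identical.
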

\begin{proof}
Let~$\Gs$ be a bipartite planar graph satisfying the assumptions stated above. By the end of the proof of Theorem~\ref{thm:min}, this implies that the non-empty set~$X_\Gs$ is contained in the set~$Y_\Gs$.
(Note that~$\Gs$ has neither degree~$1$ vertices, by hypothesis, nor degree~$2$ faces, by Point 2.) Thus, the set~$Y_\Gs$ is non-empty,
which by Theorem~\ref{thm:min} implies that~$\Gs$ is minimal.
\end{proof}

We conclude this section with the aforementioned result for~$\ZZ^2$-periodic minimal graphs. Its proof provides
a positive answer to the question at the end of Section~\ref{sub:ttmin} for this specific class of graphs.

\begin{cor}
\label{cor:X=Y}
If~$\Gs$ is minimal and~$\ZZ^2$-periodic, then both spaces~$X_\Gs$ and~$Y_\Gs$ coincide with the space of
monotone maps~$\alpha\colon\Tbip\to\RR/2\pi\ZZ$ mapping intersecting train-tracks to distinct angles.
\end{cor}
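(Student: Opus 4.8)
The plan is to prove the corollary by closing a cycle of inclusions. Write $W$ for the space described in the statement, namely the globally monotone maps $\alpha\colon\Tbip\to\RR/2\pi\ZZ$ sending intersecting train-tracks to distinct angles. The inclusion $X_\Gs\subseteq Y_\Gs$ was already obtained at the end of the proof of Theorem~\ref{thm:min}, and $X_\Gs\subseteq W$ is immediate from Definition~\ref{def:X-Y}. So it suffices to establish $Y_\Gs\subseteq W$ and $W\subseteq X_\Gs$, which together yield $X_\Gs\subseteq Y_\Gs\subseteq W\subseteq X_\Gs$ and hence the three equalities. Throughout I use the $\ZZ^2$-periodic dictionary recalled in Remark~\ref{rem:order}: each train-track has a \emph{slope} (its asymptotic direction, equivalently its primitive homology class), the set of slopes is \emph{finite}, two train-tracks are parallel (resp.\ anti-parallel) exactly when their slopes are equal (resp.\ opposite), and two train-tracks whose slopes are neither equal nor opposite must intersect, since their homology classes are then linearly independent and so have nonzero intersection number on the torus. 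Genuine $\ZZ^2$-periodicity moreover forces at least three distinct slopes, as the homology classes span a rank-two sublattice.

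For $W\subseteq X_\Gs$, fix $\alpha\in W$; the only missing condition for $\alpha\in X_\Gs$ is that $\alpha$ separates anti-parallel train-tracks. Suppose $\tr,\tr'$ are anti-parallel with $\alpha(\tr)=\alpha(\tr')$. By the remark on slopes there is a train-track $u$ whose slope is neither that of $\tr$ nor its opposite; then $u$ is neither parallel nor anti-parallel to either of $\tr,\tr'$, so it intersects both, whence $\alpha(u)\neq\alpha(\tr)=\alpha(\tr')$ because $\alpha\in W$. On the other hand the slope of $u$ lies in one of the two open arcs bounded by the opposite slopes of $\tr$ and $\tr'$, so the global cyclic order of the triple is $[\tr,u,\tr']$ or $[\tr,\tr',u]$; monotonicity of $\alpha$ together with $\alpha(\tr)=\alpha(\tr')$ then forces $\alpha(u)=\alpha(\tr)$, a contradiction. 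Hence $\alpha(\tr)\neq\alpha(\tr')$ and $\alpha\in X_\Gs$.

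For $Y_\Gs\subseteq W$, fix $\alpha\in Y_\Gs$. Separation of intersecting train-tracks is easy: if $\tr_1,\tr_2$ cross at an edge $e=v_1v_2$, then (as $\Gs$ is minimal they are distinct) both strands turn around the common vertex $v_1$, so $\tr_1,\tr_2\in\Tbip(v_1)$, and the injectivity of $\alpha$ on $\Tbip(v_1)$ built into the definition of $Y_\Gs$ gives $\alpha(\tr_1)\neq\alpha(\tr_2)$. The substantial point is global monotonicity, which is precisely the positive answer, in the periodic case, to the question at the end of Section~\ref{sub:ttmin}: the local cyclic orders determine the global one. By Lemmas~\ref{lem:vertex_1} and~\ref{lem:face_1} the global cyclic order restricts to the local cyclic order on each $\Tbip(v)$ and on each $\Tbip^\bullet(f),\Tbip^\circ(f)$, so $\alpha$ is automatically monotone on every triple supported inside a single such local set; only triples that are not locally supported must be bridged.

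I would carry out this bridging by a minimal-counterexample argument. Assume $\tr_1,\tr_2,\tr_3$ are pairwise non-parallel with $[\tr_1,\tr_2,\tr_3]$ in the global order but with the $\alpha$-values not cyclically ordered, and choose such a triple minimizing the total number of slopes lying strictly inside the three arcs cut out on the circle of slopes by those of $\tr_1,\tr_2,\tr_3$. If any arc carries an interior slope $\rho$, choose a train-track $w$ of slope $\rho$: the two triples obtained by replacing the opposite vertex of the triangle by $w$ strictly decrease the counting quantity, hence are monotone by minimality, and the transitivity of the cyclic order on $\RR/2\pi\ZZ$ (legitimate because the relevant angles are pairwise distinct, by the intersecting-separation just proved) forces $[\alpha(\tr_1),\alpha(\tr_2),\alpha(\tr_3)]$, a contradiction. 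Since every nonempty arc can be reduced this way and the slope set is finite, a minimal counterexample must have all three arcs empty, i.e.\ exactly three slopes in total, with $\tr_1,\tr_2,\tr_3$ realizing them. This base case is then settled by comparing the pairwise orders read off at the vertices where the (now consecutive-slope) train-tracks cross, using again that the global order restricts to the local orders. The main obstacle is exactly this final step: one must relate the \emph{specific} train-tracks $\tr_1,\tr_2,\tr_3$ through genuinely shared vertices or faces, rather than through parallel copies of the same slope, since $\alpha$ need not be constant on parallel train-tracks; the finiteness of the slope set provided by $\ZZ^2$-periodicity is what makes the reduction terminate, and is the reason periodicity is indispensable. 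Combining the two inclusions gives $X_\Gs=Y_\Gs=W$, and as a byproduct the global cyclic order on $\Tbip$ is reconstructed from the local ones, answering the question of Section~\ref{sub:ttmin}.
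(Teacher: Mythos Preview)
Your overall architecture matches the paper's: set $W=X'_\Gs$, then close the cycle $X_\Gs\subset Y_\Gs\subset W\subset X_\Gs$. The easy inclusions and the ``intersecting train-tracks get distinct angles'' step are fine. However, both of your nontrivial steps have genuine gaps.

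\medskip

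\textbf{The inclusion $W\subseteq X_\Gs$.} Your argument with a \emph{single} auxiliary train-track $u$ does not work. If $\alpha(\tr)=\alpha(\tr')=a$ and $[\tr,u,\tr']$ holds in $\Tbip$, then monotonicity only yields the non-strict relation $[a,\alpha(u),a]$ in $\RR/2\pi\ZZ$, which is vacuous: it does \emph{not} force $\alpha(u)=a$. (A monotone map can traverse the whole circle on the arc from $\tr$ to $\tr'$ through $u$ and be constant on the other arc.) The paper remedies this by choosing \emph{two} auxiliary train-tracks $\tr_1,\tr_2$ on \emph{opposite} arcs, i.e.\ with $[\tr,\tr_1,\tr',\tr_2]$; the existence of slopes on both arcs uses that the homology classes of all train-tracks sum to zero. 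Since $\tr_1,\tr_2$ have distinct, non-opposite slopes they intersect, so $\alpha(\tr_1)\neq\alpha(\tr_2)$, and then the four-term monotonicity relation is incompatible with $\alpha(\tr)=\alpha(\tr')$.

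\medskip

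\textbf{Global monotonicity from $Y_\Gs$.} You yourself flag the base case of your reduction as ``the main obstacle'', and indeed it is not resolved: even when there are only three slopes, the three specific train-tracks $\tr_1,\tr_2,\tr_3$ only meet \emph{pairwise}, at three different edges; the local orders on $\Tbip(v)$ at those vertices give you three pairwise inequalities but no ternary cyclic relation among $\alpha(\tr_1),\alpha(\tr_2),\alpha(\tr_3)$. The paper does not try to bridge local orders at all. Instead it applies the discrete Gauss--Bonnet formula (Proposition~\ref{lem:lem_sum}, Equation~\eqref{equ:lem_sum_1}) to an \emph{innermost triangle} bounded by $\tr,\tr',\tr''$ when they pairwise intersect: the three corner rhombus angles are lifts in $(0,2\pi)$ of $[\alpha(\tr')-\alpha(\tr)]$, $[\alpha(\tr'')-\alpha(\tr')]$, $[\alpha(\tr)-\alpha(\tr'')]$, and their summing to $2\pi$ is exactly the cyclic order $[\alpha(\tr),\alpha(\tr'),\alpha(\tr'')]$. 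When two of the three are disjoint (anti-parallel), the paper uses sum-to-zero again to find a fourth train-track and applies Gauss--Bonnet to a quadrilateral instead. This is the missing ingredient in your approach.
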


\begin{proof}
Let us denote by~$X'_\Gs$ the space of monotone maps~$\alpha\colon\Tbip\to\RR/2\pi\ZZ$ mapping
intersecting train-tracks to distinct angles. By definition, we have the inclusion~$X_\Gs\subset X'_\Gs$.
Furthermore, by the end of the proof
of Theorem~\ref{thm:min}, the inclusion~$X_\Gs\subset Y_\Gs$ holds for any minimal
graph. Hence, we are left with the proof of the inclusions~$Y_\Gs\subset X_\Gs'$ and~$X'_\Gs\subset X_\Gs$ for~$\Gs$
a~$\ZZ^2$-periodic minimal graph. This is an immediate consequence of the following three claims:
\begin{enumerate}
\item{Any~$\alpha\in Y_\Gs$ maps intersecting train-tracks to distinct angles.}
\item{If~$\alpha$ belongs to~$Y_\Gs$, then~$\alpha\colon\Tbip\to\RR/2\pi\ZZ$ is monotone.}
\item{Any~$\alpha\in X'_\Gs$ maps (disjoint) anti-parallel train-tracks to distinct angles.}
\end{enumerate}
The proof of the first point is straightforward. Indeed, if~$\alpha\in Y_\Gs$ were to map intersecting
train-tracks to the same angle, then its restriction to~$\Tbip(v)$ would fail to be injective for~$v$ being any
of the two vertices bounding the edge corresponding to this intersection.

We now turn to the proof of the second point. To show that~$\alpha\colon\Tbip\to\RR/2\pi\ZZ$ is monotone,
the idea is to apply the discrete Gauss-Bonnet
formula to the minimal immersion of~$\Gs$ given by~$\alpha\in Y_\Gs$. Let us fix three pairwise non-parallel
train-tracks~$\tr,\tr',\tr''\in\Tbip$, and let us first assume that they intersect pairwise.
In such a case, there exists an innermost triangle,
\emph{i.e.}, a triangle~$c\subset\Gs^\T$ whose three sides are segments of the three train-tracks~$\tr,\tr'$ and~$\tr''$,
and which does not contain any other such triangle. Since~$\Gs$ is minimal, train-tracks do not self-intersect, so
this ``innermost'' condition ensures that~$c$ is a simple cycle with three
corners of positive signs.
Hence,
Proposition~\ref{lem:lem_sum}
can be applied in the form of Equation~\eqref{equ:lem_sum_1}: the
three rhombus angles corresponding to these three corners add up to~$2\pi$. The precise form
of these angles depends on the orientation of~$\tr,\tr',\tr''$ along~$c$, but the argument is always the same;
we shall therefore assume that the three train-tracks~$\tr,\tr',\tr''$ are consistently oriented along the curve~$c$
(say, counterclockwise), and leave the other cases to the reader. We shall also assume without loss of generality
that~$c$ runs along~$\tr,\tr',\tr''$ in this (cyclic) order; by minimality of~$\Gs$ this implies that these
train-tracks are cyclically ordered as~$[\tr,\tr',\tr'']$ in~$\Tbip$. In such a case, Equation~\eqref{equ:lem_sum_1}
reads~$2\pi=\theta_{e}+\theta_{e'}+\theta_{e''}$, where~$\theta_{e}$ (resp.~$\theta_{e'}$,~$\theta_{e''}$) is the
unique lift in~$(0,2\pi)$ of the angle~$[\alpha(\tr')-\alpha(\tr)]$ (resp.~$[\alpha(\tr'')-\alpha(\tr')],[\alpha(\tr)-\alpha(\tr'')]$). This equality holds if and only if we have the cyclic order~$[\alpha(\tr),\alpha(\tr'),\alpha(\tr')]$
in~$\RR/2\pi\ZZ$. Therefore,~$\alpha\colon\Tbip\to\RR/2\pi\ZZ$
does preserve the cyclic order in this case.

Let us now assume that~$\tr,\tr',\tr''$ do not intersect pairwise, and finally use the~$\ZZ^2$-periodicity
of~$\Gs$.
In absence of unbounded faces, a train-track is infinite and periodic, so
that its quotient by $\ZZ^2$ is an oriented closed curve on the
torus $\RR^2/\ZZ^2=\TT^2$.
We shall write~$\Pi(\tr)\in H_1(\TT^2;\ZZ)\simeq\ZZ^2$ for the
homology class of the oriented closed curve~$\tr/\ZZ^2\subset\TT^2$.
Since $\Gs$ is bipartite and $\ZZ^2$-periodic,
the union of all toric, consistently oriented, closed curves
coming from train-tracks is the boundary of
the faces of~$\Gs^\T/\ZZ^2\subset\TT^2$ corresponding to vertices of~$\Gs$;
therefore, the sum of the corresponding
homology classes is equal to zero.

Recall that the only way for two closed curves in~$\TT^2$ not to meet is if their homology classes coincide
(which corresponds to the case of parallel planar curves), or
if they are opposite (this gives anti-parallel planar curves). Since~$\tr,\tr',\tr''$ are supposed to be pairwise
non-parallel and not all to intersect, it can be assumed without loss of generality that~$\tr$ and~$\tr''$ are
disjoint (with opposite homology classes~$\Pi(\tr)+\Pi(\tr'')=0$), while~$\tr'$ meets both~$\tr$ and~$\tr''$.
It can also be assumed that these train-tracks are cyclically ordered as~$[\tr,\tr',\tr'']$ in~$\Tbip$;
equivalently, the corresponding classes are cyclically ordered as~$[\Pi(\tr),\Pi(\tr'),\Pi(\tr'')]$ in~$\ZZ^2$.
Since the homology classes of closed curves of $\TT^2$
coming from train-tracks sum to zero,
there must exist a train-track homology class~$e_1\in\ZZ^2$ so that we have the
order~$[\Pi(\tr),\Pi(\tr'),\Pi(\tr''),e_1]$ in~$\ZZ^2$. Let~$\tr_1\in\Tbip$ denote a train-track realizing the
class~$e_1$. Translating it far enough in the~$\Pi(\tr)$-direction, we can assume that if it
intersects~$\tr'$, it does so far away from the (finite
number of) intersections of~$\tr'$ with~$\tr\cup\tr''$. Similarly to the previous case, we can now find a simple closed
curve~$c\subset\Gs^\T$ which, when travelled along in the counterclockwise direction, consists of oriented segments
of~$\tr,\tl_1,\tr'',\tl'$, and all of whose (four) corners have positive sign. As in the triangular case above,
Equation~\eqref{equ:lem_sum_1} applied to the curve~$c$ readily implies that the cyclic order
relation~$[\alpha(\tr),\alpha(\tr'),\alpha(\tr''),\alpha(\tr_1)]$ holds in~$\RR/2\pi\ZZ$. In particular,
we have the order~$[\alpha(\tr),\alpha(\tr'),\alpha(\tr'')]$, and the second point is proved.

We finally turn to the third point, fixing~$\alpha\in X'_\Gs$ and two disjoint anti-parallel
train-tracks~$\tr,\tr'\in\Tbip$.
Since~$\Gs$ is bipartite, $\ZZ^2$-periodic and does not have unbounded faces, its train-track homology classes add up to zero and
span a~$2$-dimensional lattice. Hence, there exist train-track homology classes~$e_1,e_2\in\ZZ^2$, each of which is
linearly independent from~$\Pi(\tr)=-\Pi(\tr')$, with cyclic order~$[\Pi(\tr),e_1,\Pi(\tr'),e_2]$.
This implies that any train-tracks~$\tr_1,\tr_2\in\Tbip$ realizing these classes will intersect~$\tr$ and~$\tr'$
and satisfy the cyclic order~$[\tr,\tr_1,\tr',\tr_2]$ in~$\Tbip$. Since~$\alpha$ is monotone and maps intersecting train-tracks to distinct angles, we cannot have the equality~$\alpha(\tr)=\alpha(\tr')$. This concludes the proof.
\end{proof}

\subsection{The Kasteleyn condition for the dimer model}
\label{sub:condition}

The goal of this section is to see the implications of the results obtained in Section~\ref{sub:min} and~\ref{sub:proofmin} for the dimer model defined on a planar, bipartite graph $\Gs=(\Bs\sqcup \Ws,\Es)$, see~\cite{KenyonIntro} for an overview of the dimer model and references therein. More specifically, we study the implications on the Kasteleyn condition.

Suppose that edges are assigned positive weights $\nu=(\nu_{e})_{e\in\Es}$. A \emph{dimer configuration} of $\Gs$ is a \emph{perfect matching}, that is, a subset of edges such that every vertex is incident to exactly one edge of this subset. When the graph $\Gs$ is finite, the weight of a dimer configuration is the product of the weights of its edges, and the \emph{partition function} is the weighted sum of all the dimer configurations of $\Gs$. 

In the following, we first give some background on the Kasteleyn condition, then describe and characterize the more general question stemming from our work, and finally give the
answer using the aforementioned results of Section~\ref{sub:min}
together with work of Thurston~\cite{Thurston}.

\bigskip
\textbf{The Kasteleyn condition.}
In the study of the dimer model a fundamental tool is the 
\emph{Kasteleyn matrix}, which is the
weighted, oriented, adjacency matrix~$\Ks$ associated to~$(\Gs,\nu)$, whose rows are indexed by white vertices, columns by black ones, and where the orientation of the edges satisfies \emph{Kasteleyn's condition}, \new{see below}.
Kasteleyn and Temperley--Fisher's celebrated theorem~\cite{Kast61,Kast63,Kast67,TF} (see also~\cite{Percus} for the bipartite setting),
states that the absolute value of the determinant of $\Ks$ is equal to the 
partition function of the dimer model on the weighted graph~$(\Gs,\nu)$. 

An orientation of the edges of~$\Gs$ is equivalent to assigning an element~$\omega(e)\in\{-1,+1\}$ to each edge~$e=(w,b)$ with the convention that~$\omega(e)=+1$ if and only if the orientation of~$e$ is, say, from~$w$ to~$b$.
\new{Then, the coefficient of~$\Ks$ corresponding to a white vertex~$w$ and a black vertex~$b$ is defined by
\[
\Ks_{w,b}=\sum_{e=(w,b)}\omega(e)\,\nu_e\,,
\]
the sum being over all the edges between~$w$ and~$b$; in particular, the coefficient is equal to~$0$ if there is no such edge.}
Also, the Kasteleyn condition of~\cite{Kast67} can be written as follows. For every bounded face~$f$ of~$\Gs$,
\begin{equation}
\label{equ:Kast}
\omega(\partial f):=\prod_{e\in\partial f}\omega(e)=-(-1)^{|f|/2}\,,
\end{equation}
where~$|f|$ denotes the degree of~$f$ (which is even since~$\Gs$ is bipartite). 

An extension of this theory 
is due to Kuperberg~\cite{Kuperberg}.
He proves that the group $\{-1,+1\}$ 
can be replaced by any subgroup~$G$ of~$\CC^*$ 
containing~$\{-1,+1\}$. The map~$\omega$ now assigns to each oriented edge~$e=(w,b)$ an element~$\omega(e)\in G$ so that the same edge with opposite orientation~$\overline{e}=(b,w)$ is mapped to~$\omega(\overline{e})=\omega(e)^{-1}$. 
Kasteleyn's condition on such an~$\omega$ remains as displayed above, the product being on the set of edges in~$\partial f$ oriented consistently (say, counterclockwise around~$f$).
The partition function theorem
then extends as follows: for any~$\omega$ satisfying Kasteleyn's condition, the determinant of the associated adjacency matrix~$\Ks$ is equal to the dimer partition function, up to multiplication by an element of~$G$. A natural choice of group is~$G=S^1$: in this case, the modulus of the determinant of~$\Ks$ is equal to the dimer partition function.

In his study of the dimer model on isoradial graphs~\cite{Kenyon:crit}, Kenyon 
introduces 
the
\emph{discrete~$\overline{\partial}$ operator}. The associated matrix is 
the adjacency matrix~$\Ks$ as above, with~$\nu_e\in(0,2)$ given by the length of the 
dual edge~$e^*$ of~$e$, 
and~$\omega(e)\in S^1$ given by the direction of the edge~$e=(w,b)$. 
Using the notation of Section~\ref{sub:gen_def}, the phase~$\omega=\omega_\alpha$  
can be defined in terms of the train-track angles~$\alpha\in Z_\Gs$ as
\begin{equation}
\label{equ:omega}
\arg(\omega_\alpha(e)):=\new{C}+\arg(e^{i\alpha(\tr_2)}-e^{i\alpha(\tr_1)})
\end{equation}
\new{for some constant~$C$.}
(Note that if~$\alpha(\tr_1)$ and~$\alpha(\tr_2)$ coincide, then the edge-weight vanishes, which amounts to removing the edge from~$\Gs$; alternatively, one can set~$\omega_\alpha(e)=-e^{i\alpha(\tr_1)}=-e^{i\alpha(\tr_2)}$).
Kenyon shows 
that the phase~$\omega_\alpha$ satisfies Kasteleyn's condition~\eqref{equ:Kast} for any~$\alpha\in Z_\Gs$ defining an isoradial embedding of~$\Gs$, thus proving that the modulus of the determinant of the 
discrete~$\overline{\partial}$ operator gives the dimer partition function on~$(\Gs,\nu)$.

More general weights have recently been introduced by Fock~\cite{Fock}, which exhibit the phase~$\omega_\alpha$ defined in Equation~\eqref{equ:omega} \new{up to a global sign}.
The corresponding dimer model in the elliptic case
is studied in~\cite{nous}, where instead of considering isoradial embeddings as in~\cite{Kenyon:crit}, we deal with the more general
setting of a bipartite planar graph~$\Gs$ together with an arbitrary angle map~$\alpha\in Z_\Gs$.
Again we need to know whether the associated
phase~$\omega_\alpha$ satisfies the Kasteleyn condition~\eqref{equ:Kast}. This, together with the more general framework of this paper, naturally leads to a more general question which is the subject of the next paragraph. 

\bigskip

\textbf{The Kasteleyn condition on rhombic immersions.}
Let~$\Gs$ be \emph{any} bipartite planar graph and let~$\alpha\in Z_\Gs$ be \emph{any} set of angles.
Recall that, as far as the present study of the dimer model is concerned, assigning the same angle
to train-tracks intersecting at an edge amounts to erasing this edge from~$\Gs$.
For this reason, we can assume without loss of generality that~$\alpha$ belongs to the
subset~$Z'_\Gs$ of~$Z_\Gs$ given by maps assigning distinct angles to intersecting train-tracks.
Note that, in particular, $Z'_\Gs$ is trivially empty when $\Gs$ contains a
self-intersecting train-track.

A natural question is to describe the subset $K_\Gs$ of $Z'_\Gs$ given by
\[
K_\Gs=\{\alpha\in Z'_\Gs\,|\,\omega_\alpha \text{ satisfies Kasteleyn's condition}\}\,.
\] 
In particular, is it possible to characterize the class of graphs $\Gs$ for which $K_\Gs$ is non-empty?

The following lemma gives a characterization of $K_\Gs$ using the rhombic immersion of $\GR$. Note that we will be using the notation of Section~\ref{sub:def} for the rhombus angles.

\begin{lem}
\label{lem:phase}
Let $\Gs$ be a bipartite, planar graph. Then, 
for any~$\alpha\in Z'_\Gs$, the phase~$\omega_\alpha$ satisfies the Kasteleyn condition around a face $f$ of $\Gs$ if and only if, in the rhombic immersion of $\GR$ defined from $\alpha$, the angle of the conical singularity at the face $f$ is an odd multiple of~$2\pi$, that is, if and only if 
\[
\sum_{e^*\sim f}\theta_{e^*}=2\pi\, [4\pi]\,.
\]
\end{lem}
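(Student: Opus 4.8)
The plan is to reduce the Kasteleyn condition around $f$ to a single computation of the argument of a product of unit complex numbers, and to identify that argument with half of the cone angle $\sum_{e^*\sim f}\theta_{e^*}$ up to an explicit constant. Write $|f|=2m$. By definition, the Kasteleyn condition around $f$ reads $\prod_{e\in\partial f}\omega_\alpha(e)=-(-1)^{m}$, the product running over the boundary edges oriented counterclockwise; equivalently $\sum_{e\in\partial f}\varepsilon(e)\arg(\omega_\alpha(e))\equiv (m+1)\pi \pmod{2\pi}$, where $\varepsilon(e)=+1$ if the counterclockwise orientation of $e$ agrees with its orientation from the white to the black endpoint, and $\varepsilon(e)=-1$ otherwise, so that $\sum_{e\in\partial f}\varepsilon(e)=0$.

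First I would rewrite each phase in a form free of branch ambiguity. Using $e^{i\alpha(\tr_2)}-e^{i\alpha(\tr_1)}=2i\,\sin\!\big(\tfrac{\theta_e}{2}\big)\,e^{i(\alpha(\tr_1)+\alpha(\tr_2))/2}$ together with $\theta_e\in[0,2\pi)$, so that $\sin(\theta_e/2)\ge 0$, one obtains
\[
\arg(\omega_\alpha(e))=C+\tfrac{\pi}{2}+\tfrac12\big(\hat\alpha(\tr_1)+\hat\alpha(\tr_2)\big)\pmod{2\pi},
\]
where $\hat\alpha$ denotes edgewise lifts chosen so that $\hat\alpha(\tr_2)-\hat\alpha(\tr_1)=\theta_e$. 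The crucial point is that with this lift there is no correction coming from the sign of $\sin(\theta_e/2)$: this is exactly what makes folded rhombi ($\theta_e\in(\pi,2\pi)$) be handled automatically by the formula, rather than through ad hoc case distinctions at the level of the immersed geometry.

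Next I would insert this into $\sum_{e}\varepsilon(e)\arg(\omega_\alpha(e))$. Since $\sum_e\varepsilon(e)=0$, the constants $C$ and $\tfrac{\pi}{2}$ drop out, leaving $\tfrac12\sum_{e\in\partial f}\varepsilon(e)\big(\hat\alpha(\tr_1)+\hat\alpha(\tr_2)\big)$. I would then telescope this sum around the face: two consecutive boundary edges share the train-track strand turning at their common vertex (an element of $\Tbip^\bullet(f)$ or $\Tbip^\circ(f)$), and with the coherent labelling of $\tr_1,\tr_2$ and the alternating signs $\varepsilon(e)$ the shared angles cancel in pairs. What survives is governed solely by the failure of the lifts $\hat\alpha$ to close up after one full turn around $f$, and the edgewise relations $\hat\alpha(\tr_2)-\hat\alpha(\tr_1)=\theta_e$ convert this residue into $\tfrac12\sum_{e\in\partial f}\theta_e$. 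I thus expect $\sum_e\varepsilon(e)\arg(\omega_\alpha(e))\equiv \tfrac12\sum_{e\in\partial f}\theta_e \pmod{2\pi}$, up to an overall sign that is irrelevant modulo $2\pi$ below.

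Finally, I would substitute $\theta_e=\pi-\theta_{e^*}$, giving $\tfrac12\sum_{e\in\partial f}\theta_e=m\pi-\tfrac12\sum_{e^*\sim f}\theta_{e^*}$. The same telescoping shows that $\sum_{e^*\sim f}\theta_{e^*}$ is automatically a multiple of $2\pi$ (equivalently $\sum_{e\in\partial f}\theta_e\in 2\pi\ZZ$), so that ``odd multiple of $2\pi$'' is meaningful. Writing $\sum_{e^*\sim f}\theta_{e^*}=2\pi N$, the left-hand side becomes $\equiv m\pi+\pi N\pmod{2\pi}$, and the Kasteleyn value $(m+1)\pi$ is attained if and only if $N$ is odd, that is, if and only if $\sum_{e^*\sim f}\theta_{e^*}=2\pi\ [4\pi]$, as claimed. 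I expect the main obstacle to be the telescoping step: it requires pinning down, around $f$, the combinatorial labelling of $(\tr_1,\tr_2)$ dictated by the coherent, positively oriented conventions, the corresponding signs $\varepsilon(e)$, and a consistent global choice of the lifts $\hat\alpha$, and then verifying that the shared-strand contributions cancel precisely and leave $\tfrac12\sum_{e\in\partial f}\theta_e$. This is where all of the sign bookkeeping, and in particular the correct treatment of folded rhombi, is concentrated.
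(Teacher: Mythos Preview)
Your approach is correct and the telescoping step does go through (one clean way: fix an arbitrary global lift~$\bar\alpha\colon\Tbip\to\RR$, write~$\theta_e=\bar\alpha(\tr_2)-\bar\alpha(\tr_1)+2\pi k_e$ with~$k_e\in\ZZ$, so that~$\arg(\omega_\alpha(e))\equiv C+\tfrac{\pi}{2}+\tfrac12(\bar\alpha(\tr_1)+\bar\alpha(\tr_2))+\pi k_e$; then the~$\bar\alpha$-terms telescope to~$0$ around~$\partial f$ and the residual~$\pi\sum_e\varepsilon(e)k_e$ agrees with~$\tfrac12\sum_e\theta_e$ modulo~$2\pi$). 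So your outline is sound, and the sign bookkeeping you flag as the obstacle is manageable once one separates the global lift from the integer corrections~$k_e$.

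The paper takes a different and somewhat shorter route. Instead of lifting arguments and telescoping, it pairs the two boundary edges~$e'=(w',b)$ and~$e''=(b,w'')$ at each black vertex~$b\in\partial f$ and computes their product directly as complex numbers: using that~$\omega_\alpha(e)$ is, up to a fixed rotation, the direction of the dual edge~$\iota(f')-\iota(f)$ in the rhombic immersion, one gets~$\omega_\alpha(e')\omega_\alpha(e'')=-e^{\frac{i}{2}(\theta_{e'^*}+\theta_{e''^*})}$ in one line, with no lift choices at all. Multiplying over the~$m$ black vertices yields~$\omega_\alpha(\partial f)=(-1)^m e^{\frac{i}{2}\sum_{e^*\sim f}\theta_{e^*}}$, which is equivalent to what you obtain after your substitution~$\theta_e=\pi-\theta_{e^*}$. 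The advantage of the paper's pairing is that the geometric meaning of the phase absorbs the folded-rhombus cases automatically at the level of each pair, whereas in your argument this is handled by the observation~$\sin(\theta_e/2)\ge 0$ together with careful lift tracking. Your algebraic route has the merit of being entirely symbolic in the train-track angles and not relying on the dual-edge interpretation of~$\omega_\alpha$.
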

\begin{proof}
Fix a planar, bipartite graph~$\Gs$ and~$\alpha\in Z'_\Gs$, and consider the corresponding 
rhombic immersion of $\GR$
near a given arbitrary bounded face~$f$ of~$\Gs$. Recall that any edge~$e\subset\partial f$ defines a rhombus (which can be folded or not), and that all these rhombi are pasted cyclically following the cyclic order of the vertices in~$\partial f$. 
Recall also that these vertices are on a circle, whose center we denote by~$\iota(f)$. Fix an arbitrary black vertex~$b\in\partial f$. Let us write~$e'=(w',b)$ and~$e''=(b,w'')$ for the adjacent edges in~$\partial f$ oriented counterclockwise around~$f$, write
$\theta',\theta''$ for the corresponding rhombus angles, and
$f'$, $f''$ for the corresponding faces adjacent to~$f$.
By definition, the phase~$\omega_\alpha$ satisfies
\[
\omega_\alpha(e')\omega_\alpha(e'')=\frac{\omega_\alpha(w',b)}{\omega_\alpha(w'',b)}=\frac{e^{i\frac{\pi}{2}}\,e^{i\arg(\iota(f')-\iota(f))}}{e^{i\frac{\pi}{2}}\,e^{i\arg(\iota(f)-\iota(f''))}}=-\frac{e^{i\arg(\iota(f')-\iota(f))}}{e^{i\arg(\iota(f'')-\iota(f))}}=-e^{\frac{i}{2}({\theta'}^*+{\theta''}^*)}\,,
\]
where~${\theta'}^*\in(-\pi,\pi]$ denotes the dual rhombus angle of the edge $e'$ at  
~$\iota(f)$, and similarly for~${\theta''}^*$ and~$e''$.
Note that this equality holds whatever the status of the edges~$e'$ and~$e''$ (folded or unfolded).
Multiplying the equation displayed above for all the~$|f|/2$ black vertices of~$\partial f$, we get
\[
\omega_\alpha(\partial f)=\prod_{e\in\partial f}\omega_\alpha(e)=(-1)^{|f|/2}\,e^{\frac{i}{2}\sum\limits_{e^*\sim f}\theta_{e^*}},
\]
and the lemma follows.
\end{proof}

\bigskip

\textbf{The Kasteleyn condition and minimal graphs.}
Using Theorem~\ref{thm:min} and~\cite{Thurston},
we now show that~$K_\Gs$ is non-empty
if and only if~$\Gs$ is a minimal graph. The ``if'' part of this statement
extends the aforementioned result of Kenyon~\cite{Kenyon:crit} which deals with the smaller class of bipartite graphs which can be isoradially embedded in the plane, while the ``only if'' part
provides an alternative proof of the fact that non-minimal graphs do not admit minimal immersions.

Recall that the weights that are of interest to us vanish on an edge if both train-tracks crossing this edge have the same angle. Therefore, we can assume without loss of generality that the train-tracks do not self-intersect.

\begin{thm}
\label{thm:phase}
Given a bipartite, planar graph~$\Gs$ whose train-tracks do not self-intersect,
the set~$K_\Gs$ is non-empty if and only if~$\Gs$ is minimal.
In such a case we have the inclusions~$X_\Gs\subset Y_\Gs\subset K_\Gs$.
\end{thm}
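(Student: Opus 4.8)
The plan is to establish the two inclusions $X_\Gs\subset Y_\Gs\subset K_\Gs$ first, and to read off the ``if'' direction of the equivalence from them; the ``only if'' direction is the genuinely new statement and is where Thurston's work~\cite{Thurston} enters. Recall that the inclusion $X_\Gs\subset Y_\Gs$ for minimal $\Gs$ has already been obtained in the course of the proof of Theorem~\ref{thm:min}, so only the inclusion $Y_\Gs\subset K_\Gs$ and the implication ``$K_\Gs\neq\emptyset\Rightarrow\Gs$ minimal'' remain to be treated.

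I would first prove $Y_\Gs\subset K_\Gs$. Let $\alpha\in Y_\Gs$. By Lemma~\ref{lem:minflat}, $\alpha$ together with $k\equiv 0$ defines a minimal, hence flat, isoradial immersion of $\Gs$; in particular $\sum_{e^*\sim f}\theta_{e^*}=2\pi$ for every bounded face $f$. Since the flatness value $2\pi$ is in particular equal to $2\pi\,[4\pi]$, Lemma~\ref{lem:phase} applies at every face and shows that $\omega_\alpha$ satisfies the Kasteleyn condition around each of them. Moreover $\alpha\in Z'_\Gs$: the two train-tracks crossing at a given edge are two of the strands of $\Tbip(v)$ for either endpoint $v$ of that edge, and the restriction of $\alpha$ to $\Tbip(v)$ is injective by definition of $Y_\Gs$, so these two train-tracks receive distinct angles. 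Hence $\alpha\in K_\Gs$, proving $Y_\Gs\subset K_\Gs$. The ``if'' direction is then immediate: if $\Gs$ is minimal, then $X_\Gs$ is non-empty, and the chain $X_\Gs\subset Y_\Gs\subset K_\Gs$ shows $K_\Gs\neq\emptyset$; this also yields the asserted inclusions.

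For the ``only if'' direction I would argue by contradiction. Suppose $K_\Gs\neq\emptyset$, fix $\alpha\in K_\Gs$, and assume $\Gs$ is not minimal. Since the train-tracks neither self-intersect (by hypothesis) nor form closed loops (the standing assumption of Section~\ref{sec:minimal}), non-minimality forces a parallel bigon; choosing an innermost one yields a simple closed curve $c\subset\Gs^\T$ with exactly two corners $e_1,e_2$, both incident to faces of $\Gs$ and carrying complementary rhombus angles $\theta_{e_1}=\theta$, $\theta_{e_2}=2\pi-\theta$, so that $\theta_{e_1^*}+\theta_{e_2^*}=0$, exactly as in the proof of the ``only if'' part of Theorem~\ref{thm:min}. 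The difficulty is that, in contrast with that proof, membership in $K_\Gs$ does not provide exact flatness but, through Lemma~\ref{lem:phase}, only controls the cone angle at each face modulo $4\pi$; consequently the discrete Gauss--Bonnet identity of Proposition~\ref{lem:lem_sum} applied to $c$ does not close into an immediate numerical contradiction, since the total angles $s_v$ at the vertices enclosed by $c$ are a priori unconstrained.

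This is precisely where Thurston's combinatorial description of minimal graphs~\cite{Thurston} is needed, and it is the main obstacle of the argument. Thurston characterizes minimality through the minimal-position (bigon- and monogon-free) property of the zig-zag paths; I would use this structural input to rule out the parallel bigon produced above for any $\alpha$ whose phase $\omega_\alpha$ is Kasteleyn, thereby converting the purely local data at the corners $e_1,e_2$ into a global obstruction. Once this step is in place, the assumption that $\Gs$ is non-minimal is untenable, so $\Gs$ must be minimal, and the equivalence together with the inclusions $X_\Gs\subset Y_\Gs\subset K_\Gs$ is complete. As noted in the text, this also re-proves, via Thurston rather than via the flat Gauss--Bonnet formula, that non-minimal graphs admit no minimal immersion, since $Y_\Gs\subset K_\Gs=\emptyset$ in that case.
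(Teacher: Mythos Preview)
Your treatment of the ``if'' direction and of the inclusions $X_\Gs\subset Y_\Gs\subset K_\Gs$ is correct and coincides with the paper's argument: flatness of a minimal immersion gives cone angle exactly $2\pi$ at each face, which via Lemma~\ref{lem:phase} yields the Kasteleyn condition, and injectivity on $\Tbip(v)$ ensures $\alpha\in Z'_\Gs$.

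The ``only if'' direction, however, has a genuine gap. You correctly diagnose why the Gauss--Bonnet argument from Theorem~\ref{thm:min} does not go through (Kasteleyn only controls face cone angles modulo $4\pi$, and vertex angles not at all), but your appeal to Thurston is too vague to be a proof: you do not say what statement from~\cite{Thurston} you invoke, nor how it turns the corner equality $\theta_{e_1^*}+\theta_{e_2^*}=0$ into a contradiction with $\alpha\in K_\Gs$. The paper's route is quite different from the direction you sketch. It does not work with the bigon directly; instead it proves two auxiliary lemmas: first, that $K_\Gs$ is \emph{invariant} under shrinking/expanding of $2$-valent vertices and spider moves (this is a concrete computation with the cone-angle quantities $\delta_\alpha(f)$ of Lemma~\ref{lem:phase}); second, using Thurston's theory of triple-crossing diagrams and $2$--$2$ moves, that any non-minimal bipartite planar graph without self-intersecting train-tracks can be transformed by such moves into a graph $\Gs'$ admitting a face of degree~$2$. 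At a degree-$2$ face the cone angle is identically $0$ for every $\alpha\in Z'_{\Gs'}$, which is an even multiple of $2\pi$, so $K_{\Gs'}=\emptyset$; invariance then gives $K_\Gs=\emptyset$. The missing idea in your proposal is precisely this ``invariance under local moves plus reduction to a degenerate face'' strategy; without it, there is no clear way to extract a contradiction from membership in $K_\Gs$.
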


While the ``if'' part of this statement follows readily from Theorem~\ref{thm:min},
our proof of the other direction relies on two lemmas. The first one deals with
two local transformations for bipartite graphs introduced by Kuperberg and called
\emph{shrinking/expanding of a~$2$-valent vertex} and \emph{spider move} in~\cite{GK}.
They are illustrated in Figure~\ref{fig:elementary}.

\begin{figure}[htb]
    \centering
    \includegraphics[width=\linewidth]{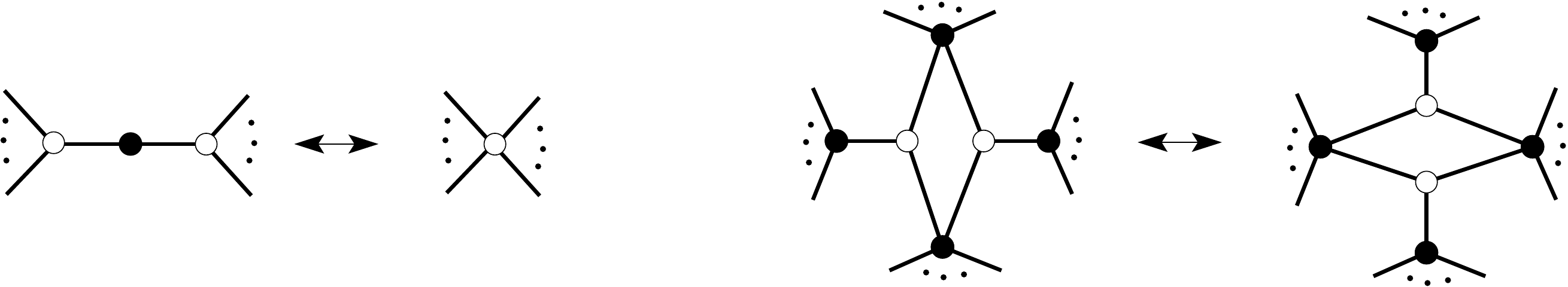}
    \caption{Shrinking/expanding of a~$2$-valent (black) vertex, and spider move (with black boundary vertices).}
    \label{fig:elementary}
  \end{figure}

\begin{lem}
\label{lemma:inv}
The space~$K_\Gs$ is invariant under shrinking/expanding of~$2$-valent vertices and spider moves.
\end{lem}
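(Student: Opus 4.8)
The plan is to reduce everything to the cone-angle reformulation of the Kasteleyn condition given by Lemma~\ref{lem:phase}, and then to carry out a local computation for each of the two moves. Recall that, by that lemma, an angle map $\alpha\in Z'_\Gs$ belongs to $K_\Gs$ if and only if, in the rhombic immersion of $\GR$ defined by $\alpha$, every bounded face $f$ of $\Gs$ satisfies
\[
\sum_{e^*\sim f}\theta_{e^*}\equiv 2\pi \pmod{4\pi}\,.
\]
Both moves of Figure~\ref{fig:elementary} are local, and at the level of train-tracks they do not change the underlying set $\Tbip$: shrinking/expanding a $2$-valent vertex only creates or removes a bigon of two train-tracks, while the spider move is a Reidemeister-III-type rearrangement of three strands, analogous to the third move of Figure~\ref{fig:reidemeister}. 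In particular, all the angles $\alpha(\tr)$, and hence the rhombus angles $\theta_e,\theta_{e^*}$ of every surviving edge, are left unchanged, and the moves identify $Z'_\Gs$ with $Z'_{\Gs'}$ away from the gadget. Since faces of $\Gs$ disjoint from the gadget are unaffected, it suffices in each case to verify that the displayed congruence is preserved on the finitely many faces meeting the gadget.

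For the shrinking/expanding of a $2$-valent vertex $v$, I would first note that $v$ lies inside a bigon bounded by two train-tracks, and is incident to exactly two faces $f$ and $g$ of $\Gs$, both of which carry the two edges $e_1,e_2$ of $v$ on their boundary. The key point is that the two rhombus angles at $v$ satisfy $\theta_{e_1}+\theta_{e_2}=2\pi$: indeed, the sum of the rhombus angles around any vertex of $\Gs$ is a positive multiple of $2\pi$, since it measures the total angle swept by the shared unit edge-vectors of consecutive rhombi, and for a non-degenerate $2$-valent vertex this sum lies in $[0,4\pi)$, forcing the value $2\pi$. Consequently
\[
\theta_{e_1^*}+\theta_{e_2^*}=(\pi-\theta_{e_1})+(\pi-\theta_{e_2})=0\,,
\]
so removing (or, in the expanding direction, inserting) the two dual rhombi changes the cone angle of each of $f$ and $g$ by $0$. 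The congruence is therefore preserved on both affected faces, and $\alpha\in K_\Gs$ if and only if $\alpha\in K_{\Gs'}$.

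The spider move is the main obstacle, and I would treat it in the spirit of the third part of the proof of Proposition~\ref{prop:Reidemeister}, with a bookkeeping analogous to that of Figure~\ref{fig:angles_trois_losanges}. The crucial structural observation is that the rhombus angles at the crossings involved depend only on the (unchanged) angles of the three train-tracks meeting at the gadget; the move merely relocates these rhombi and redistributes them among the central face and the neighboring faces, leaving every other rhombus untouched. Using the duality relation $\theta_e+\theta_{e^*}=\pi$ together with the fact that the principal lifts of the crossing angles around the central corner of the gadget add up to a multiple of $2\pi$ (just as $\theta_{e_1}+\theta_{e_2}+\theta_{e_3}\in\{2\pi,4\pi\}$ around a trivalent vertex), I would check that the net change of $\sum_{e^*\sim f}\theta_{e^*}$ on each affected face $f$ is a multiple of $4\pi$. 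Establishing this congruence face by face, according to whether each corner of the gadget corresponds to a vertex or to a face of $\Gs$, is the delicate step; once it is done, the relation $\sum_{e^*\sim f}\theta_{e^*}\equiv 2\pi\pmod{4\pi}$ holds simultaneously for all affected faces before and after the move, giving $\alpha\in K_\Gs$ if and only if $\alpha\in K_{\Gs'}$.

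Finally, I would emphasize that this invariance is exactly the $K_\Gs$-analogue of Proposition~\ref{prop:Reidemeister}: there the integers $k$ were adjusted so as to preserve the flatness equality $\sum\tilde\theta_{e^*}=2\pi$, whereas here one works with $k\equiv 0$ and only needs to preserve the weaker congruence modulo $4\pi$ coming from Lemma~\ref{lem:phase}. It is precisely this relaxation to a mod-$4\pi$ condition that allows the local computations to close without modifying $\alpha$, and thus yields the invariance of $K_\Gs$ under both moves.
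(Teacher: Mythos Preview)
Your treatment of the $2$-valent vertex move is correct and essentially matches the paper's argument (the paper observes more directly that the two rhombus angles are lifts in $(0,2\pi)$ of opposite elements of $\RR/2\pi\ZZ$, hence sum to $2\pi$).

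For the spider move, however, there is a genuine gap. First, the spider gadget involves \emph{four} train-tracks, not three: it is not an analogue of the third Reidemeister move but rather a rearrangement of four strands around a degree-$4$ face (compare Figure~\ref{fig:inv2} in the paper). Your analogy with Proposition~\ref{prop:Reidemeister} and with a trivalent-vertex identity like $\theta_{e_1}+\theta_{e_2}+\theta_{e_3}\in\{2\pi,4\pi\}$ is therefore misplaced.

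Second, and more importantly, the invariance on the four neighboring faces $f_1,\dots,f_4$ is \emph{not} automatic: the differences $\delta_\alpha(f_j)-\delta_\alpha(f'_j)$ do not vanish modulo $4\pi$ for arbitrary $\alpha\in Z'_\Gs$. The paper first checks that $\delta_\alpha(f)=\delta_\alpha(f')$ for the central face, and observes that the Kasteleyn condition around $f$ forces the four angles $\alpha(\tr_1),\dots,\alpha(\tr_4)$ into one of two specific cyclic orders. It is only under this cyclic-order constraint that identities such as $\theta_{43}+\theta_{31}=\theta_{21}+\theta_{32}$ hold, and these are precisely what makes each $\delta_\alpha(f_j)-\delta_\alpha(f'_j)$ vanish. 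Your sketch treats each affected face independently and asserts that the change is a multiple of $4\pi$ ``once it is done''; but without feeding in the Kasteleyn constraint on the central face, this step fails. In other words, the correct statement is that $\alpha\in K_\Gs$ implies $\alpha\in K_{\Gs'}$ (and conversely), not that the cone angles at neighboring faces are individually unchanged for all $\alpha$.
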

\begin{proof}
To prove this result, we will make use of Lemma~\ref{lem:phase} in the following form:
the phase~$\omega_\alpha$ satisfies the Kasteleyn condition around a face~$f$ of~$\Gs$ as in
Figure~\ref{fig:tt_around_face} if and only if
\[
\delta_\alpha(f):=\sum_{j=1}^m(2\pi-(\theta_j+\theta'_j))
\]
is an odd multiple of~$2\pi$, where~$\theta_j\in (0,2\pi)$ is the
lift of~$[\alpha(\tr_j')-\alpha(\tr_j)]$
and~$\theta'_j\in (0,2\pi)$ the lift of~$[\alpha(\tr_j)-\alpha(\tr'_{j+1})]$
(with~$\tr'_{m+1}=\tr'_1$).

\begin{figure}[htb]
    \centering
    \begin{overpic}[width=0.6\linewidth]{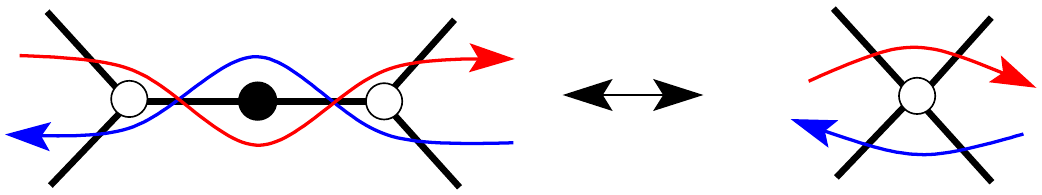}
    \put(23,17){\scriptsize $f_1$}
        \put(23,0){\scriptsize $f_2$}
        \put(-4,5){\scriptsize $\tr_2$}
        \put(50.5,12.5){\scriptsize $\tr_1$}
        \put(87,17){\scriptsize $f'_1$}
        \put(87,0){\scriptsize $f'_2$}
        \put(72,7){\scriptsize $\tr_2$}
        \put(101,9){\scriptsize $\tr_1$}
\end{overpic}
   \caption{Invariance of~$K_\Gs$ under shrinking/expanding of a~$2$-valent black vertex.}
    \label{fig:inv1}
  \end{figure}
  
Let us start with the invariance of~$K_\Gs$ under shrinking/expanding of a~$2$-valent vertex 
and, without loss of generality, assume that it is black.
Using the notation of Figure~\ref{fig:inv1},
we get~$\delta_\alpha(f_1)=\delta_\alpha(f'_1)+2\pi-(\theta_{12}+\theta_{21})$,
where~$\theta_{12}\in(0,2\pi)$ is the lift of~$[\alpha(\tr_1)-\alpha(\tr_2)]$
and~$\theta_{21}\in(0,2\pi)$ the lift of~$[\alpha(\tr_2)-\alpha(\tr_1)]$.
This leads to~$\theta_{12}+\theta_{21}=2\pi$, so~$\delta_\alpha(f_1)$ and~$\delta_\alpha(f'_1)$ coincide.
The same argument shows the equality~$\delta_\alpha(f_2)=\delta_\alpha(f'_2)$,
proving the invariance of~$K_\Gs$ under this move. 

\begin{figure}[htb]
    \centering
    \begin{overpic}[width=0.8\linewidth]{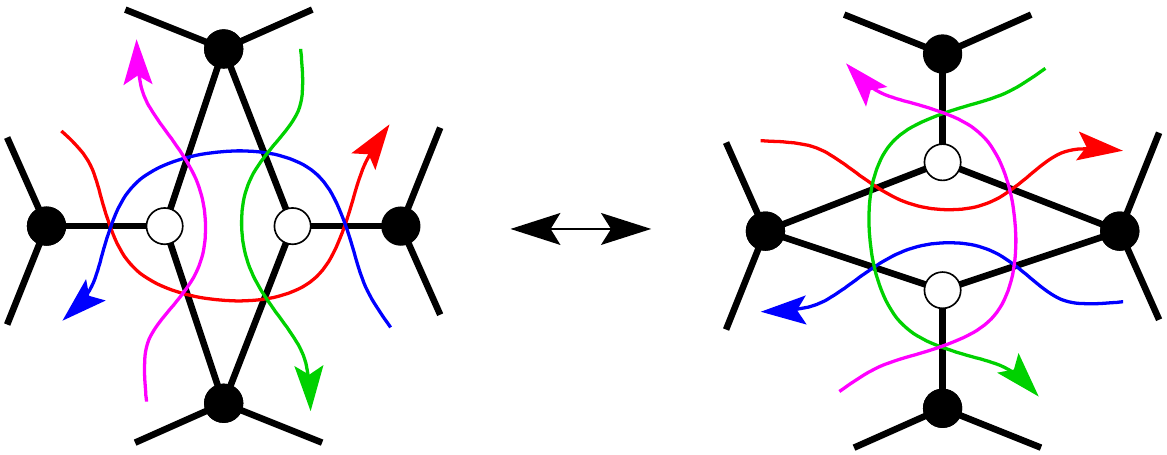}
    \put(18.5,19.5){\scriptsize $f$}
    \put(76,19){\scriptsize $f'$}
    \put(2,35){\scriptsize $f_1$}
    \put(65,35){\scriptsize $f'_1$}
    \put(5,5){\scriptsize $f_2$}
    \put(65,5){\scriptsize $f'_2$}
    \put(32,5){\scriptsize $f_3$}
    \put(95,5){\scriptsize $f'_3$}
    \put(32,35){\scriptsize $f_4$}
    \put(95,35){\scriptsize $f'_4$}
    \put(8,33){\scriptsize $\tr_1$}
    \put(3,10){\scriptsize $\tr_2$}
    \put(28,4){\scriptsize $\tr_3$}
    \put(33,29){\scriptsize $\tr_4$}
    \put(70,32){\scriptsize $\tr_1$}
    \put(64,9){\scriptsize $\tr_2$}
    \put(89.5,3){\scriptsize $\tr_3$}
    \put(95,28){\scriptsize $\tr_4$}

\end{overpic}
    \caption{Invariance of~$K_\Gs$ under spider move with black boundary vertices.}
    \label{fig:inv2}
  \end{figure}
 
We now turn to the invariance of~$K_\Gs$ under the spider move,
assuming the notation of Figure~\ref{fig:inv2}.
We handle the case of black boundary vertices, the case of white ones being similar.
First note the equality
\[
\delta_\alpha(f)=4\pi-(\theta_{21}+\theta_{14}+\theta_{43}+\theta_{32})=\delta_\alpha(f')\,,
\]
with~$\theta_{ij}\in(0,2\pi)$ the lift of~$[\alpha(\tr_i)-\alpha(\tr_j)]$ for~$1\le i,j\le 4$.
Therefore, the phase~$\omega_\alpha$ satisfies the Kasteleyn condition 
around~$f$ if and only if it does around~$f'$. Moreover,
this holds for exactly two of the six
cyclic orders of these four train-track angles,
namely~$[\alpha(\tr_1),\alpha(\tr_2),\alpha(\tr_3),\alpha(\tr_4)]$
and~$[\alpha(\tr_1),\alpha(\tr_4),\alpha(\tr_3),\alpha(\tr_2)]$.
Therefore, it can be assumed that these four evaluations of~$\alpha$
are cyclically ordered in one of these two ways.
Let us now study the Kastelyn condition around the faces~$f_1$ and~$f'_1$.
Using the same notation as above, we have
\[
\delta_\alpha(f_1)-\delta_\alpha(f'_1)=\theta_{43}+\theta_{31}-(\theta_{21}+\theta_{32})\,.
\]
As one easily checks, the cyclic order constraint imposed on~$\alpha$
by the Kasteleyn condition around~$f$ implies the
equality~$\theta_{43}+\theta_{31}=\theta_{21}+\theta_{32}$. 
Therefore,~$\delta_\alpha(f_1)$ and~$\delta_\alpha(f'_1)$ coincide, so
the phase~$\omega_\alpha$ satisfies the Kasteleyn condition 
around~$f_1$ if and only if it does around~$f_1'$.
The equalities~$\delta_\alpha(f_j)=\delta_\alpha(f'_j)$ for~$j=2,3,4$
can be checked in the same way, proving the invariance of~$K_\Gs$ under this move.
\end{proof}

We now turn to the second and last lemma needed fo the proof of Theorem~\ref{thm:phase}.

\begin{lem}
\label{lemma:bigon}
Let~$\Gs$ be a bipartite, planar, non-minimal graph whose train-tracks do not self-intersect.
Then, it is related via a finite number of shrinking/expanding of~$2$-valent vertices and spider moves
to a bipartite, planar graph~$\Gs'$ which admits a face of degree~$2$.
\end{lem}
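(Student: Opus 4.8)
The plan is to start from the remark that, since $\Gs$ is non-minimal but has no self-intersecting train-tracks, Definition~\ref{defi:min} forces $\Gs^\T$ to contain a parallel bigon: two train-tracks $t_1,t_2$ meeting at points $x,y$, both oriented from $x$ to $y$, and bounding a disc $D\subset\RR^2$. The goal is to transform $\Gs$, using only shrinking/expanding of $2$-valent vertices and spider moves, into a graph whose graph of train-tracks still carries such a bigon but with \emph{empty} interior, and then to check that an empty parallel bigon is exactly a degree-$2$ face. Throughout, I read the two moves at the level of train-tracks, in the spirit of the Reidemeister analogy of Figure~\ref{fig:reidemeister}: a spider move slides one strand across the crossing of two others (a Reidemeister-III-type move), while shrinking/expanding a $2$-valent vertex creates or erases a double crossing of two strands (a Reidemeister-II-type move). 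Both moves preserve planarity, bipartiteness, and the absence of self-intersections, so they keep us within the class of graphs to which the argument applies.

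To run an induction I would attach to each parallel bigon the number of vertices of $\Gs^\T$ (\emph{i.e.}\ of train-track crossings) lying strictly inside its disc $D$. Among all graphs reachable from $\Gs$ by finite sequences of the two moves, and over all their parallel bigons, I choose one minimizing this complexity and claim the minimum is $0$. The key point making this minimization stable is that a move carried out strictly inside $D$ leaves $t_1,t_2$ and the corners $x,y$ untouched, so the enclosing parallel bigon survives, with its complexity altered only by what happens inside $D$.

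The base case (complexity $0$) is where the consistent orientation does the work. If no crossing lies inside $D$, the two arcs of $t_1$ and $t_2$ joining $x$ to $y$ are single edges of $\Gs^\T$ and $D$ is a single (bigon) face of $\Gs^\T$, hence corresponds to a vertex or a face of $\Gs$. Orienting $\partial D$ counterclockwise shows that, since $t_1,t_2$ are both oriented from $x$ to $y$, the disc $D$ lies to the left of one of them and to the right of the other. Recalling from Section~\ref{sub:ttmin} that black vertices sit to the right and white vertices to the left of each oriented train-track, $D$ cannot correspond to a vertex of $\Gs$ (around which every bounding strand would keep that vertex on one fixed side); therefore $D$ corresponds to a face of $\Gs$, and being bounded by two strands it is a degree-$2$ face. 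This proves the lemma with $\Gs'=\Gs$ in this case.

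The inductive step is the main obstacle. Assuming the minimal complexity is positive, I must produce a move, performed inside $D$, that strictly lowers it. The strategy is to examine the arrangement inside $D$ of $t_1,t_2$ together with the finitely many strands crossing $D$, and to isolate an innermost complementary region. Because train-tracks form neither loops nor self-intersections, such a region has at least two sides and should be either an anti-parallel bigon — which by the orientation analysis above is a $2$-valent vertex of $\Gs$, removable by a shrinking move — or a triangle of three pairwise-crossing strands, collapsible by a spider move; a \emph{parallel} innermost bigon is excluded, since it would be a parallel bigon of smaller complexity, contradicting minimality. Either move lowers the number of interior crossings, the desired contradiction. The delicate points, where I expect to spend the most care, are proving that a reducible innermost configuration always exists when the complexity is positive, and that the chosen triangle or bigon genuinely carries the right degree and colour pattern for a spider move or a $2$-valent shrink while keeping the picture planar and self-intersection free; this is exactly the content of Thurston's reduction argument~\cite{Thurston}, which I would follow in detail.
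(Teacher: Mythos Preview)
Your overall strategy—locate a parallel bigon, empty its interior, and observe that an empty parallel bigon is a degree-$2$ face—matches the paper's, and your base case is correct and in fact cleaner than the paper's: the orientation argument showing that an empty parallel bigon in~$\Gs^\T$ must correspond to a face (not a vertex) of~$\Gs$ is exactly right.

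The gap is in the inductive step, specifically in your dictionary between the graph moves and Reidemeister-type moves on~$\Gs^\T$. Shrinking a $2$-valent vertex does remove an anti-parallel bigon, as you say. But the spider move is \emph{not} a Reidemeister-III move on~$\Gs^\T$: at the train-track level it involves four strands and four crossings (the boundary of a degree-$4$ face), not three, and it rearranges them without changing which pairs cross (see Figure~\ref{fig:inv2}). Conversely, a triangular face of~$\Gs^\T$ corresponds to a degree-$3$ \emph{vertex} of~$\Gs$ (odd faces being excluded by bipartiteness), on which the spider move does not act. So your case split ``innermost region is an anti-parallel bigon or a triangle'' does not line up with the two moves at hand, and in particular you have no mechanism for sliding a single strand across a crossing inside~$D$.

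The paper repairs this by an extra step you are missing: it first uses expansions of $2$-valent black vertices to make every white vertex trivalent, and then passes to Thurston's \emph{triple-crossing diagrams}, in which each white vertex becomes a single $6$-valent crossing. In that setting there is a single local move, the $2$--$2$ move of~\cite{Thurston}, and it is precisely this move that, depending on the orientation of the three strands involved, translates back either as a shrink/expand of a $2$-valent (black) vertex or as a spider move with black boundary. Thurston's Lemmas~11(c) and~12 then empty the bigon. Your instinct to defer to~\cite{Thurston} is correct, but the bridge to Thurston's framework is this triple-crossing reformulation, not a direct Reidemeister calculus on~$\Gs^\T$.
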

\begin{proof}
Since~$\Gs$ is non-minimal, connected, and its train-tracks do not self-intersect, it contains a finite
subgraph~$\Gs_1$ whose train-tracks form at least one parallel bigon.
Since~$\Gs_1$ is finite and has no~$1$-valent vertex (as its train-tracks do not self-intersect), it
can be transformed into a graph~$\Gs_2$ whose white vertices are~$3$-valent, via a finite number of
shrinking of (white)~$2$-valent vertices and expanding of (black)~$2$-valent vertices.
Following~\cite{GK}, consider the \emph{triple-crossing diagram}~$D_2$ obtained
from the train-tracks~$\Tbip$ of~$\Gs_2$
as follows: for each white vertex~$w$ of~$\Gs$, move the three oriented train-tracks
of~$\Tbip(w)$ so that they intersect into a single~$6$-valent vertex.
Fix an innermost parallel bigon~$B$ in~$D_2$, and consider the triple-crossing
diagram~$D\subset D_2$ given by all the train-tracks strands contained in some
small neighborhood of~$B$.
The proof of~\cite[Lemma~12]{Thurston} now implies the following:
via a finite number of so-called~\emph{$2-2$ moves}, illustrated in Figure~\ref{fig:2-2} (left),
one can transform~$D$ so that the parallel bigon~$B$ no longer contains crossings in its interior,
and so that one of the strands bounding~$B$ no longer forms anti-parallel bigons in~$B$.
This is illustrated in Figure~\ref{fig:2-2} (right).
\begin{figure}[htb]
    \centering
    \includegraphics[height=2.5cm]{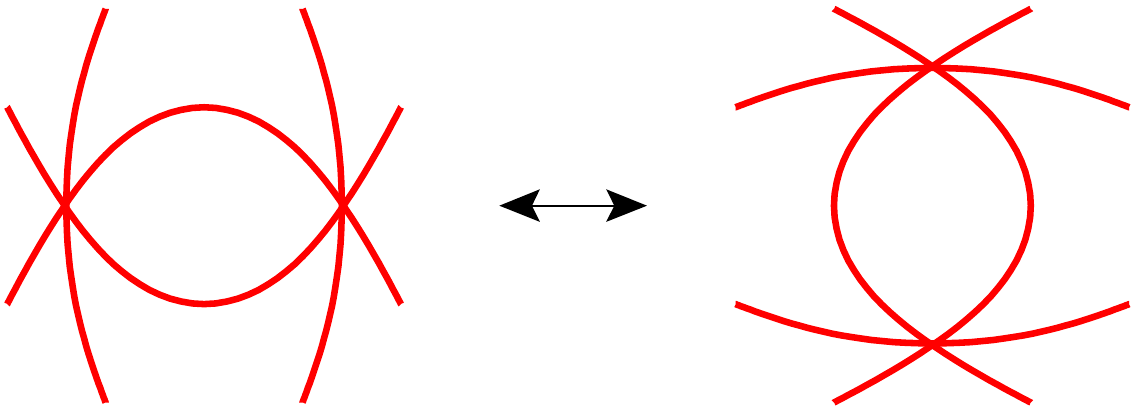}\hspace{2cm}\includegraphics[height=2.5cm]{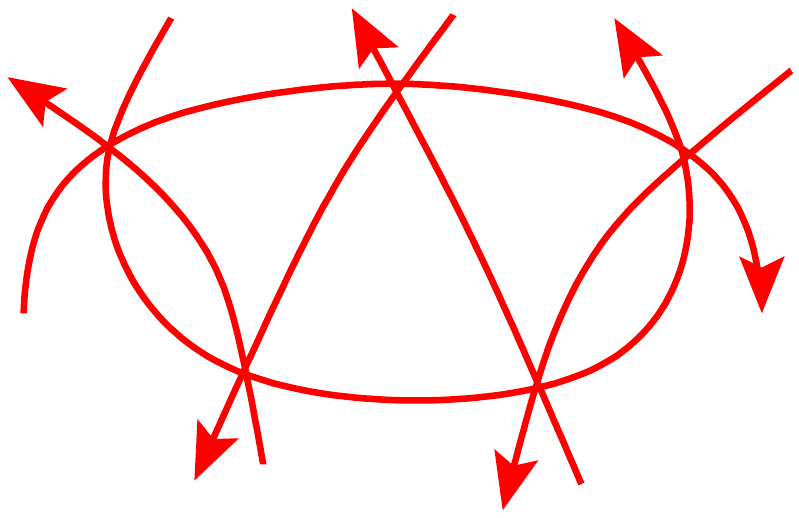}
    \caption{Thurston's~$2-2$ move (left), and a parallel bigon with neither interior crossings
    nor anti-parallel bigons bounded by the upper strand (right).}
    \label{fig:2-2}
  \end{figure}
Note that two other moves appear in~\cite[Lemma~12]{Thurston}, but they are only required in order
to get rid of self-intersections, parallel bigons and disconnected components.
Since~$D$ admits neither self-intersections nor loops, and since~$B$ is innermost among parallel bigons,
these other moves are not required in our situation.
By~\cite[Lemma~11(c)]{Thurston} (see also Figure~3(c) of this same article), a finite number of
additional~$2-2$ moves transforms~$D$ into a triple-crossing diagram~$D'$ containing a diagram
of the form illustrated in Figure~\ref{fig:D'} (left).
Now observe that the two types of~$2-2$ moves given by the two possible orientations
of the train-tracks correspond to shrinking/expanding of (black)~$2$-valent vertices
and spider moves (with black boundary vertices), see Figures~\ref{fig:inv1} and~\ref{fig:inv2}. 
Finally, the graph~$\Gs'$ corresponding to~$D'$ contains the graph illustrated in
Figure~\ref{fig:D'} (right), which admits a face of degree~$2$. This concludes the proof.
\end{proof}

\begin{figure}[htb]
    \centering
    \includegraphics[height=4cm]{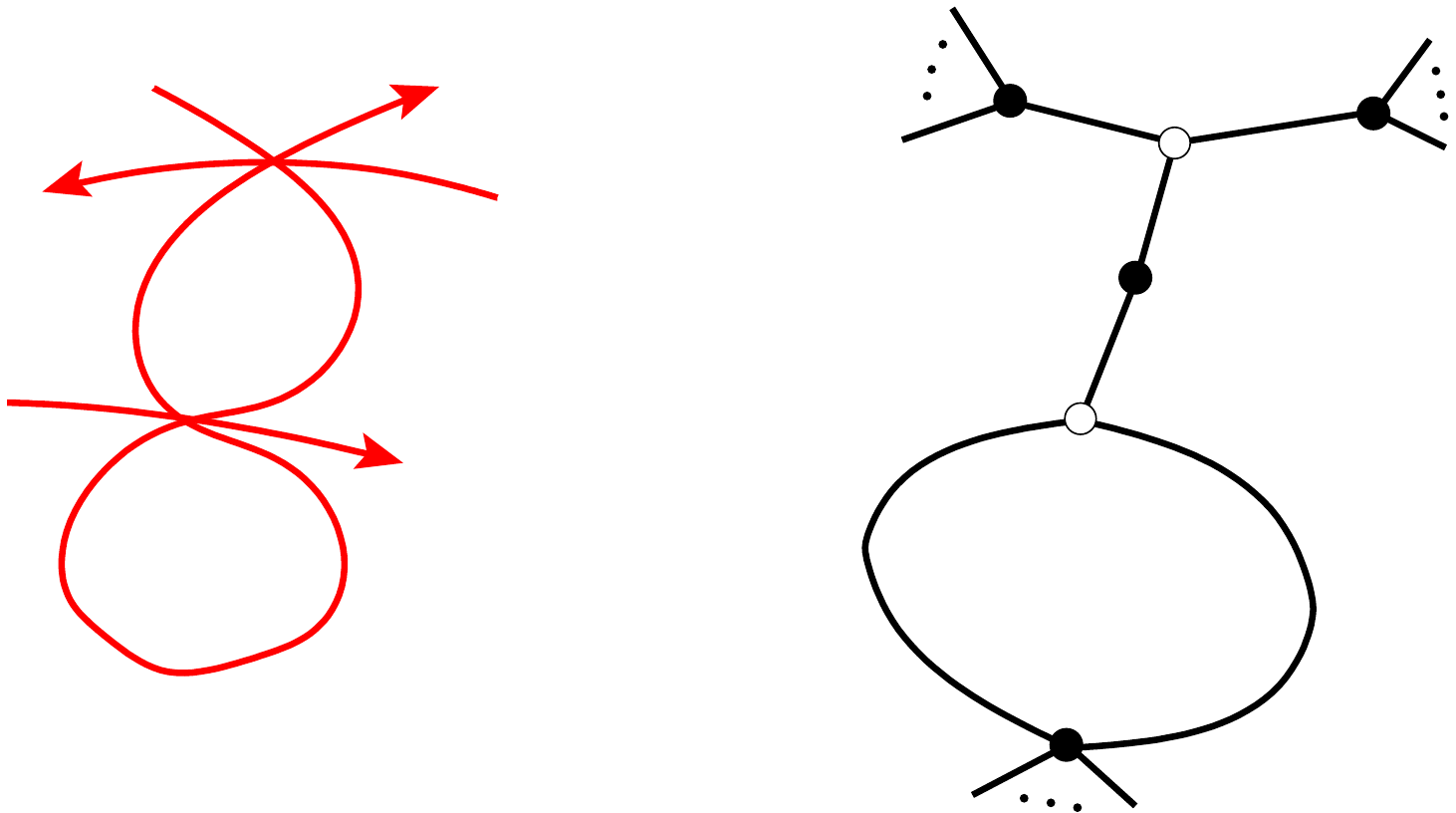}
    \caption{A portion of the triple-crossing diagram~$D'$ (left), and the corresponding subgraph of~$\Gs'$ (right).}
    \label{fig:D'}
  \end{figure}
 
As noticed by the reader, our proof of Lemma~\ref{lemma:bigon}
relies on Thurston's theory of triple-crossing diagrams~\cite{Thurston}.
It should be noted that this result also follows from the work of Postnikov,
more precisely ~\cite[Lemma~13.6]{Postnikov} applied to connected, bipartite planar
graphs whose train-tracks do not self-intersect.
We thank Marianna Russkikh for pointing out this additional reference.

We are finally ready to prove the main result of this section.

\begin{proof}[Proof of Theorem~\ref{thm:phase}]
Let us first assume that~$\Gs$ is minimal.
By Theorem~\ref{thm:min}, any~$\alpha\in Y_\Gs$ defines a flat isoradial immersion of~$\Gs$. In particular, the angle of the conical singularity at any face is equal to~$2\pi$. By Lemma~\ref{lem:phase}, this shows that~$\omega_\alpha$ satisfies the Kasteleyn condition.
Since the inclusion~$Y_\Gs\subset Z'_\Gs$ is immediate,
we have the inclusion~$Y_\Gs\subset K_\Gs$. By the proof of Theorem~\ref{thm:min}, we also have the inclusion~$X_\Gs\subset Y_\Gs$, with~$X_\Gs$ non-empty, so~$K_\Gs$ is non-empty as claimed.
Let us now assume that~$\Gs$ is a non-minimal graph whose train-tracks do not self-intersect.
By Lemmas~\ref{lemma:inv} and~\ref{lemma:bigon}, the space~$K_\Gs$ coincides with~$K_{\Gs'}$ for some planar
bipartite graph~$\Gs'$ admitting a face of degree~$2$.
Let us denote by~$\tr_1,\tr_2$ the two corresponding train-tracks.
For any~$\alpha\in Z'_{\Gs}=Z'_{\Gs'}$, the cone angle inside this face
is equal to~$2\pi-(\theta+\theta')$, where~$\theta\in(0,2\pi)$ is the lift of~$[\alpha(\tr_1)-\alpha(\tr_2)]$
and~$\theta'\in(0,2\pi)$ the lift of~$[\alpha(\tr_2)-\alpha(\tr_1)]$. This leads to~$\theta+\theta'=2\pi$,
so this cone angle vanishes.
By Lemma~\ref{lem:phase}, this arbitrary~$\alpha$ does not belong to~$K_{\Gs'}$, so~$K_{\Gs'}=K_\Gs$ is empty.
This concludes the proof.
\end{proof}

A natural question is whether the inclusions~$X_\Gs\subset Y_\Gs\subset K_\Gs$
for~$\Gs$ minimal
are actually equalities.
The equality between~$X_\Gs$ and~$Y_\Gs$ holds in the~$\ZZ^2$-periodic case (recall Corollary~\ref{cor:X=Y}), but whether it remains true in the general case is still open. We
conclude this article with an example showing
that the second inclusion
is not an equality in general.

\begin{exm}
Consider the bipartite minimal graph~$\Gs$ given by the square lattice,
and denote the four train-tracks around an arbitrary face by~$t_1,t_1',t_2,t_2'$,
as illustrated in Figure~\ref{fig:tt_around_face}.
We focus on~$\ZZ^2$-periodic elements of~$Z_\Gs$, \emph{i.e.},
maps assigning the same value to parallel train-tracks.
Since each train-track of~$\Gs$ is parallel to exactly one of the four train-tracks listed above, we are dealing with maps~$\alpha\colon\{t_1,t_1',t_2,t_2'\}\to\RR/2\pi\ZZ$.
It is not difficult to check that, on this easy example, the set
of~$\ZZ^2$-periodic elements of~$Y_\Gs$ consists of all
injective maps~$\alpha$ such that the angles are cyclically ordered as~$[\alpha(\tr_1),\alpha(\tr'_1),\alpha(\tr_2),\alpha(\tr'_2)]$. 
However, any injective~$\beta\colon\{t_1,t_1',t_2,t_2'\}\to\RR/2\pi\ZZ$ such that the corresponding angles are cyclically ordered as~$[\beta(\tr'_1),\beta(\tr_1),\beta(\tr'_2),\beta(\tr_2)]$ will define an isoradial immersion with cone angle equal to~$-2\pi$ inside each face of~$\Gs$. Therefore, by Lemma~\ref{lem:phase}, such a~$\beta$ belongs to~$K_\Gs\setminus Y_\Gs$.
\end{exm}

\bibliographystyle{alpha}
\bibliography{ellipt_iso}

\newcommand{\etalchar}[1]{$^{#1}$}
\begin{thebibliography}{DCLM18}

\bibitem[Bax78]{Baxter:8V}
Rodney~J. Baxter.
\newblock Solvable eight-vertex model on an arbitrary planar lattice.
\newblock {\em Philosophical Transactions of the Royal Society of London A:
  Mathematical, Physical and Engineering Sciences}, 289(1359):315--346, 1978.

\bibitem[Bax89]{Baxter:exactly}
Rodney~J. Baxter.
\newblock {\em Exactly solved models in statistical mechanics}.
\newblock Academic Press Inc. [Harcourt Brace Jovanovich Publishers], London,
  1989.
\newblock Reprint of the 1982 original.

\bibitem[BCdT20]{nous}
C\'edric Boutillier, David Cimasoni, and B\'eatrice de~Tili\`ere.
\newblock Minimal bipartite dimers and genus one {H}arnack curves.
\newblock {\em arXiv e-prints}, 2020.

\bibitem[BCdT21]{BCdT:genusg}
C{\'e}dric Boutillier, David Cimasoni, and B{\'e}atrice de~Tili{\`e}re.
\newblock {Dimers on minimal graphs and genus $g$ Harnack curves}, 2021+.

\bibitem[BDCS15]{Be-Du-Sm}
Vincent Beffara, Hugo Duminil-Copin, and Stanislav Smirnov.
\newblock On the critical parameters of the {$q\leqslant4$} random-cluster
  model on isoradial graphs.
\newblock {\em J. Phys. A}, 48(48):484003, 25, 2015.

\bibitem[BdT10]{BdT-ZIsing1}
C\'{e}dric Boutillier and B\'{e}atrice de~Tili\`ere.
\newblock The critical {${\bf Z}$}-invariant {I}sing model via dimers: the
  periodic case.
\newblock {\em Probab. Theory Related Fields}, 147(3-4):379--413, 2010.

\bibitem[BdT11]{BdT-ZIsing2}
C\'{e}dric Boutillier and B\'{e}atrice de~Tili\`ere.
\newblock The critical {$Z$}-invariant {I}sing model via dimers: locality
  property.
\newblock {\em Comm. Math. Phys.}, 301(2):473--516, 2011.

\bibitem[BdT12]{BdTsurvey}
C\'{e}dric Boutillier and B\'{e}atrice de~Tili\`ere.
\newblock Statistical mechanics on isoradial graphs.
\newblock In {\em Probability in complex physical systems}, volume~11 of {\em
  Springer Proc. Math.}, pages 491--512. Springer, Heidelberg, 2012.

\bibitem[BdTR17]{BdT-ZLaplace}
C\'{e}dric Boutillier, B\'{e}atrice de~Tili\`ere, and Kilian Raschel.
\newblock The {$Z$}-invariant massive {L}aplacian on isoradial graphs.
\newblock {\em Invent. Math.}, 208(1):109--189, 2017.

\bibitem[BdTR19]{BdT-ZIsing3}
C\'{e}dric Boutillier, B\'{e}atrice de~Tili\`ere, and Kilian Raschel.
\newblock The {$Z$}-invariant {I}sing model via dimers.
\newblock {\em Probab. Theory Related Fields}, 174(1-2):235--305, 2019.

\bibitem[Cim12]{CimIsing}
David Cimasoni.
\newblock The critical {I}sing model via {K}ac-{W}ard matrices.
\newblock {\em Comm. Math. Phys.}, 316(1):99--126, 2012.

\bibitem[CS11]{Chelkak-Smirnov1}
Dmitry Chelkak and Stanislav Smirnov.
\newblock Discrete complex analysis on isoradial graphs.
\newblock {\em Adv. Math.}, 228(3):1590--1630, 2011.

\bibitem[CS12]{Chelkak-Smirnov2}
Dmitry Chelkak and Stanislav Smirnov.
\newblock Universality in the 2{D} {I}sing model and conformal invariance of
  fermionic observables.
\newblock {\em Invent. Math.}, 189(3):515--580, 2012.

\bibitem[DCLM18]{Du-Li-Mano}
Hugo Duminil-Copin, Jhih-Huang Li, and Ioan Manolescu.
\newblock Universality for the random-cluster model on isoradial graphs.
\newblock {\em Electron. J. Probab.}, 23:Paper No. 96, 70, 2018.

\bibitem[dT07]{deTil:iso}
B{\'e}atrice de~Tili{\`e}re.
\newblock Scaling limit of isoradial dimer models and the case of triangular
  quadri-tilings.
\newblock {\em Ann. Inst. H. Poincaré Sect. B.}, 43(6):729--750, 2007.

\bibitem[Duf68]{Duffin}
Richard~J. Duffin.
\newblock Potential theory on a rhombic lattice.
\newblock {\em J. Combinatorial Theory}, 5:258--272, 1968.

\bibitem[FHV{\etalchar{+}}06]{Franco_2006}
Sebasti{\'{a}}n Franco, Amihay Hanany, David Vegh, Brian Wecht, and Kristian~D.
  Kennaway.
\newblock Brane dimers and quiver gauge theories.
\newblock {\em Journal of High Energy Physics}, 2006(01):096--096, jan 2006.

\bibitem[{Foc}15]{Fock}
Vladimir~V. {Fock}.
\newblock {Inverse spectral problem for GK integrable system}.
\newblock {\em ArXiv e-prints}, March 2015.

\bibitem[GK13]{GK}
Alexander~B. Goncharov and Richard Kenyon.
\newblock Dimers and cluster integrable systems.
\newblock {\em Ann. Sci. Éc. Norm. Supér.}, 46(5):747--813, 2013.

\bibitem[GM14]{Grimmett-Mano}
Geoffrey~R. Grimmett and Ioan Manolescu.
\newblock Bond percolation on isoradial graphs: criticality and universality.
\newblock {\em Probab. Theory Related Fields}, 159(1-2):273--327, 2014.

\bibitem[Gri14]{Grimmett}
Geoffrey~R. Grimmett.
\newblock Criticality, universality, and isoradiality.
\newblock In {\em Proceedings of the {I}nternational {C}ongress of
  {M}athematicians---{S}eoul 2014. {V}ol. {IV}}, pages 25--47. Kyung Moon Sa,
  Seoul, 2014.

\bibitem[Gul08]{Gulotta}
Daniel~R. Gulotta.
\newblock Properly ordered dimers, {R}-charges, and an efficient inverse
  algorithm.
\newblock {\em Journal of High Energy Physics}, 2008(10):014, 2008.

\bibitem[IU11]{Akira_Ueda}
Akira Ishii and Kazushi Ueda.
\newblock A note on consistency conditions on dimer models.
\newblock {\em Higher Dimensional Algebraic Geometry}, pages 143--164, 2011.

\bibitem[Kas61]{Kast61}
Pieter~W. Kasteleyn.
\newblock The statistics of dimers on a lattice.
\newblock {\em Physica}, 27:1209--1225, 1961.

\bibitem[Kas63]{Kast63}
Pieter~W. Kasteleyn.
\newblock Dimer statistics and phase transitions.
\newblock {\em J. Math. Phys.}, 4:287--293, 1963.

\bibitem[Kas67]{Kast67}
Pieter~W. Kasteleyn.
\newblock Graph theory and crystal physics.
\newblock In {\em Graph {T}heory and {T}heoretical {P}hysics}, pages 43--110.
  Academic Press, London, 1967.

\bibitem[Ken02]{Kenyon:crit}
Richard Kenyon.
\newblock The {L}aplacian and {D}irac operators on critical planar graphs.
\newblock {\em Invent. Math.}, 150(2):409--439, 2002.

\bibitem[Ken04]{KenyonIntro}
Richard Kenyon.
\newblock An introduction to the dimer model.
\newblock In {\em School and {C}onference on {P}robability {T}heory}, ICTP
  Lect. Notes, XVII, pages 267--304. Abdus Salam Int. Cent. Theoret. Phys.,
  Trieste, 2004.

\bibitem[KLRR18]{KLRR}
Richard {Kenyon}, Wai~Yeung {Lam}, Sanjay {Ramassamy}, and Marianna {Russkikh}.
\newblock {Dimers and Circle patterns}.
\newblock {\em arXiv e-prints}, page arXiv:1810.05616v1, October 2018.

\bibitem[KS05]{KeSchlenk}
Richard Kenyon and Jean-Marc Schlenker.
\newblock Rhombic embeddings of planar quad-graphs.
\newblock {\em Trans. Amer. Math. Soc.}, 357(9):3443--3458 (electronic), 2005.

\bibitem[Kup98]{Kuperberg}
Greg Kuperberg.
\newblock An exploration of the permanent-determinant method.
\newblock {\em Electron. J. Combin.}, 5:Research Paper 46, 34, 1998.

\bibitem[Li17]{Li}
Zhongyang Li.
\newblock Conformal invariance of dimer heights on isoradial double graphs.
\newblock {\em Ann. Inst. Henri Poincar\'{e} D}, 4(3):273--307, 2017.

\bibitem[Mer01]{Mercat}
Christian Mercat.
\newblock Discrete {R}iemann surfaces and the {I}sing model.
\newblock {\em Comm. Math. Phys.}, 218(1):177--216, 2001.

\bibitem[Per69]{Percus}
Jerome~K. Percus.
\newblock One more technique for the dimer problem.
\newblock {\em J. Math. Phys.}, 10:1881, 1969.

\bibitem[Pos06]{Postnikov}
Alexander Postnikov.
\newblock Total positivity, {G}rassmannians, and networks.
\newblock {\em arXiv e-prints}, September 2006.

\bibitem[TF61]{TF}
Harold N.~V. Temperley and Michael~E. Fisher.
\newblock Dimer problem in statistical mechanics-an exact result.
\newblock {\em Philosophical Magazine}, 6(68):1061--1063, 1961.

\bibitem[Thu17]{Thurston}
Dylan~P. Thurston.
\newblock From dominoes to hexagons.
\newblock In {\em Proceedings of the 2014 {M}aui and 2015 {Q}inhuangdao
  conferences in honour of {V}aughan {F}. {R}. {J}ones' 60th birthday},
  volume~46 of {\em Proc. Centre Math. Appl. Austral. Nat. Univ.}, pages
  399--414. Austral. Nat. Univ., Canberra, 2017.

\end{thebibliography}

\end{document}